\newcommand{\Tr}{\mathrm{Tr}\,}
\newtheorem{theorem}{Theorem}[section]
\newtheorem{lemma}[theorem]{Lemma}
\newtheorem{proposition}[theorem]{Proposition}
\theoremstyle{definition}
\theoremstyle{remark}
\newtheorem{remark}[theorem]{Remark}
\numberwithin{equation}{section}
\title{Matrix continued fractions associated with lattice paths, resolvents of difference operators, and random polynomials}
\author{J. Kim  \qquad A. L\'{o}pez-Garc\'{i}a \qquad V.A. Prokhorov}
\date{\today}
\begin{document}

\maketitle

\begin{abstract}
We begin our analysis with the study of two collections of lattice paths in the plane, denoted $\mathcal{D}_{[n,i,j]}$ and $\mathcal{P}_{[n,i,j]}$. These paths consist of sequences of $n$ steps, where each step allows movement in three directions: upward (with a maximum displacement of $q$ units), rightward (exactly one unit), or downward (with a maximum displacement of $p$ units). The paths start from the point $(0,i)$ and end at the point $(n,j)$. In the collection $\mathcal{D}_{[n,i,j]}$, it is a crucial constraint that paths never go below the $x$-axis, while in the collection $\mathcal{P}_{[n,i,j]}$, paths have no such restriction. We assign weights to each path in both collections and introduce weight polynomials and generating series for them. Our main results demonstrate that certain matrices of size $q\times p$ associated with these generating series can be expressed as matrix continued fractions. These results extend the notable contributions previously made by P. Flajolet \cite{Fla} and G. Viennot \cite{Viennot} in the scalar case $p=q=1$. The generating series can also be interpreted as resolvents of one-sided or two-sided difference operators of finite order. Additionally, we analyze a class of random banded matrices $H$, which have $p+q+1$ diagonals with entries that are independent and bounded random variables. These random variables have identical distributions along diagonals. We investigate the asymptotic behavior of the expected values of eigenvalue moments for the principal $n\times n$ truncation of $H$ as $n$ tends to infinity.

\smallskip

\textbf{Keywords:} Matrix continued fraction, lattice paths, difference operator, banded matrix, random polynomials.

\textbf{MSC 2020:} Primary 05A15, 30B70, 47A10, 60G99; Secondary 41A20.
\end{abstract}

\tableofcontents

\section{Introduction}

As the title indicates, in this paper we explore different topics in several areas of research. We study lattice paths, generating series, and matrix continued fractions. We also investigate questions on difference operators, random matrices and their characteristic polynomials, and rational approximation. The common thread that connects these various subjects is the use of banded matrices and difference operators.

In this introduction we give an overview of the problems investigated and the main results obtained. Key ideas and the central objects of our study are underlined. For the sake of brevity, the more technical details, and occasionally the precise definitions, are left to the discussion in the subsequent sections.  

\subsection{Lattice paths, resolvent functions of difference operators, and matrix continued fractions}

First, we define two infinite banded matrices $H$ and $W$, which will play a central role in our investigation. The entries of these matrices will be used to introduce weights (or labels) in certain collections of lattice paths. 
\par
Fix two arbitrary integers $p, q\geq 1$, which throughout the paper will remain fixed. For each $-p\leq k\leq q$, let $(a^{(k)}_{n})_{n \in \mathbb{Z}}$ be a bi-infinite sequence of complex numbers. Let $H=(h_{i,j})_{i,j=0}^{\infty}$ denote the infinite matrix with entries
\begin{equation}\label{def:entriesH}
\begin{cases}
h_{j,j+k}=a_{j}^{(k)}, & 0\leq k\leq q, \quad j\geq 0,\\[0.3em]
h_{j+k,j}=a_{j}^{(-k)}, & 0\leq k\leq p, \quad j\geq 0,\\[0.3em]
h_{i,j}=0, & \mbox{otherwise}.
\end{cases}
\end{equation}
Note that this is a banded matrix with  $q$ superdiagonals and $p$ subdiagonals. The matrix $H$ has zero entries below the $p$-th subdiagonal, and zero entries above the $q$-th superdiagonal:
\begin{equation}\label{matrixH}
H=\begin{pmatrix}
a_{0}^{(0)} & \cdots & a_{0}^{(q)} & & & \\
\vdots & a_{1}^{(0)} & \cdots & a_{1}^{(q)} & &\\
a_{0}^{(-p)} & \vdots & a_{2}^{(0)} & \cdots & a_{2}^{(q)} &\\
 & a_{1}^{(-p)} & \vdots & \ddots & & \ddots \\
 &  & a_{2}^{(-p)} & & \ddots \\
 & & & \ddots &  
\end{pmatrix}.
\end{equation}
Let $W=(w_{i,j})_{i,j \in \mathbb{Z}}$ denote the two-sided infinite matrix with entries
\begin{equation}\label{def:entriesW}
\begin{cases}
w_{j,j+k}=a_{j}^{(k)}, & 0\leq k\leq q, \quad j \in \mathbb{Z},\\[0.3em]
w_{j+k,j}=a_{j}^{(-k)}, & 0\leq k\leq p, \quad j\in \mathbb{Z},\\[0.3em]
w_{i,j}=0, & \mbox{otherwise}.
\end{cases}
\end{equation}
The matrix $W$ is also a banded matrix with $q$ superdiagonals and $p$ subdiagonals, and observe that $H$ is a submatrix of $W$:
\begin{equation}\label{matrixW}
W=
\left(\begin{array}{ccccccccc}
  & & \ddots & & & & & & \\ 
& \ddots & & a_{-q-1}^{(q)} & & & & & \\
\ddots & & \ddots & \vdots & a_{-q}^{(q)} & & & & \\
& a_{-p-1}^{(-p)}& \cdots & a_{-1}^{(0)} & \vdots & \ddots & & & \\[0.5em]
&  & a_{-p}^{(-p)} & \cdots & a_{0}^{(0)} & \cdots  & a_{0}^{(q)} &  & \\
& & & \ddots &\vdots & a_{1}^{(0)} & \cdots & a_{1}^{(q)} & \\
& &  & & a_{0}^{(-p)} & \vdots & \ddots & & \ddots \\
& &  &  & & a_{1}^{(-p)} & & \ddots & \\
&  & &   &  & & \ddots & &     
\end{array}\right).
\end{equation}
In this paper we define two main collections of lattice paths, which are denoted $\mathcal{D}_{[n,i,j]}$ and $\mathcal{P}_{[n,i,j]}$. In this notation, $n$ indicates the length of the paths in the collection, and $i,j$ indicate the $y$-coordinates of the initial and terminal points of the paths, respectively. We then introduce the corresponding generating series $A_{i,j}(z)$ and $W_{i,j}(z)$, associated with the collections $\{\mathcal{D}_{[n,i,j]}\}_{n\geq 0}$ and $\{\mathcal{P}_{[n,i,j]}\}_{n\geq 0}$, respectively. The primary goal of our study is to establish two main results, namely that certain matrices of size $q\times p$ with entries given by these generating series can be expressed as matrix continued fractions. 

Each path in the collections $\mathcal{D}_{[n,i,j]}$ and $\mathcal{P}_{[n,i,j]}$ consists of a sequence of $n$ steps with vertices in the lattice $\mathbb{Z}_{\geq 0}\times\mathbb{Z}$, with initial point $(0,i)$ and terminal point $(n,j)$. Each step can be either upward (with a maximum displacement of $q$ units), horizontal (exactly one unit to the right), or downward (with a maximum displacement of $p$ units). In the collection $\mathcal{D}_{[n,i,j]}$ we also require that paths cannot go below the $x$-axis, with no such restriction for the paths in the collection $\mathcal{P}_{[n,i,j]}$.

We assign a weight $w(e) = a_m^{(k)}$ to each step $e$ in a lattice path, where $m$ is the $y$-coordinate of the lowest point in the step, and $-p\leq k\leq q$ indicates the difference between the $y$-coordinates of the terminal and initial points of $e$. The weight of a lattice path $\gamma$ is then defined as the product of the weights of all the steps in the path, i.e., we define
\[
w(\gamma)=\prod_{e\subset\gamma}w(e),
\]
where the product runs over the different steps of $\gamma$. In Figs.~\ref{7-1-22-1}--\ref{7-1-22-2} we illustrate some examples of paths and their weights.

\begin{figure}
\begin{center}
\begin{tikzpicture}[scale=0.6]
\draw[line width=1.5pt]  (-3.03,0) -- (15.5,0);
\draw[line width=1.5pt]  (-3,0) -- (-3,4.5);
\draw[line width=0.7pt] (-3,0) -- (-2,1) -- (-1,3) -- (0,1) -- (1,4) -- (2,1) -- (3,0) -- (4,2) -- (5,3) -- (6,3) -- (7,4) -- (8,0) -- (9,2) -- (10,1) -- (11,0) -- (12,3) -- (13,1) -- (14,2) -- (15,3);
\draw [line width=0.5] (-2,0) -- (-2,-0.12);
\draw [line width=0.5] (-1,0) -- (-1,-0.12);
\draw [line width=0.5] (0,0) -- (0,-0.12);
\draw [line width=0.5] (1,0) -- (1,-0.12);
\draw [line width=0.5] (2,0) -- (2,-0.12);
\draw [line width=0.5] (3,0) -- (3,-0.12);
\draw [line width=0.5] (4,0) -- (4,-0.12);
\draw [line width=0.5] (5,0) -- (5,-0.12);
\draw [line width=0.5] (6,0) -- (6,-0.12);
\draw [line width=0.5] (7,0) -- (7,-0.12);
\draw [line width=0.5] (8,0) -- (8,-0.12);
\draw [line width=0.5] (9,0) -- (9,-0.12);
\draw [line width=0.5] (10,0) -- (10,-0.12);
\draw [line width=0.5] (11,0) -- (11,-0.12);
\draw [line width=0.5] (12,0) -- (12,-0.12);
\draw [line width=0.5] (13,0) -- (13,-0.12);
\draw [line width=0.5] (14,0) -- (14,-0.12);
\draw [line width=0.5] (15,0) -- (15,-0.12);
\draw [line width=0.5] (-3,1) -- (-3.12,1);
\draw [line width=0.5] (-3,2) -- (-3.12,2);
\draw [line width=0.5] (-3,3) -- (-3.12,3);
\draw [line width=0.5] (-3,4) -- (-3.12,4);
\draw [dotted] (-3,1) -- (15.5,1);
\draw [dotted] (-3,2) -- (15.5,2);
\draw [dotted] (-3,3) -- (15.5,3);
\draw [dotted] (-3,4) -- (15.5,4);
\draw [dotted] (-2,0) -- (-2,4.5);
\draw [dotted] (-1,0) -- (-1,4.5);
\draw [dotted] (0,0) -- (0,4.5);
\draw [dotted] (1,0) -- (1,4.5);
\draw [dotted] (2,0) -- (2,4.5);
\draw [dotted] (3,0) -- (3,4.5);
\draw [dotted] (4,0) -- (4,4.5);
\draw [dotted] (5,0) -- (5,4.5);
\draw [dotted] (6,0) -- (6,4.5);
\draw [dotted] (7,0) -- (7,4.5);
\draw [dotted] (8,0) -- (8,4.5);
\draw [dotted] (9,0) -- (9,4.5);
\draw [dotted] (10,0) -- (10,4.5);
\draw [dotted] (11,0) -- (11,4.5);
\draw [dotted] (12,0) -- (12,4.5);
\draw [dotted] (13,0) -- (13,4.5);
\draw [dotted] (14,0) -- (14,4.5);
\draw [dotted] (15,0) -- (15,4.5);
\draw (-2,-0.1) node[below, scale=0.8]{$1$};
\draw (-1,-0.1) node[below, scale=0.8]{$2$};
\draw (0,-0.1) node[below, scale=0.8]{$3$};
\draw (1,-0.1) node[below, scale=0.8]{$4$};
\draw (2,-0.1) node[below, scale=0.8]{$5$};
\draw (3,-0.1) node[below, scale=0.8]{$6$};
\draw (4,-0.1) node[below, scale=0.8]{$7$};
\draw (5,-0.1) node[below, scale=0.8]{$8$};
\draw (6,-0.1) node[below, scale=0.8]{$9$};
\draw (7,-0.1) node[below, scale=0.8]{$10$};
\draw (8,-0.1) node[below, scale=0.8]{$11$};
\draw (9,-0.1) node[below, scale=0.8]{$12$};
\draw (10,-0.1) node[below, scale=0.8]{$13$};
\draw (11,-0.1) node[below, scale=0.8]{$14$};
\draw (12,-0.1) node[below, scale=0.8]{$15$};
\draw (13,-0.1) node[below, scale=0.8]{$16$};
\draw (14,-0.1) node[below, scale=0.8]{$17$};
\draw (15,-0.1) node[below, scale=0.8]{$18$};
\draw (-3.1,1) node[left, scale=0.8]{$1$};
\draw (-3.1,2) node[left, scale=0.8]{$2$};
\draw (-3.1,3) node[left, scale=0.8]{$3$};
\draw (-3.1,4) node[left, scale=0.8]{$4$};
\end{tikzpicture}
\end{center}
\caption{Example, in the case $q=3, p=4$, of a path in the collection $\mathcal{D}_{[18,0,3]}$ with weight
$a^{(1)}_{0}a^{(2)}_{1}a^{(-2)}_{1}a^{(3)}_{1}a^{(-3)}_{1}a^{(-1)}_{0}a^{(2)}_{0}
a^{(1)}_{2}a^{(0)}_{3}a^{(1)}_{3}
a^{(-4)}_{0}a^{(2)}_{0}a^{(-1)}_{1}a^{(-1)}_{0}a^{(3)}_{0}a^{(-2)}_{1}a^{(1)}_{1}a^{(1)}_{2}$.}
\label{7-1-22-1}
\end{figure}

\begin{figure}
\begin{center}
\begin{tikzpicture}[scale=0.6]
\draw[line width=1.5pt]  (-3,0) -- (15.5,0);
\draw[line width=1.5pt]  (-3,-2.5) -- (-3,3.5);
\draw[line width=0.7pt] (-3,-1) -- (-2,-2) -- (-1,-1) -- (0,2) -- (1,2) -- (2,-2) -- (3,0) -- (4,-1) -- (5,1) -- (6,3) -- (7,2) -- (8,-1) -- (9,-1) -- (10,-2) -- (11,1) -- (12,0) -- (13,-2) -- (14,1) -- (15,2);
\draw [line width=0.5] (-2,0) -- (-2,-0.12);
\draw [line width=0.5] (-1,0) -- (-1,-0.12);
\draw [line width=0.5] (0,0) -- (0,-0.12);
\draw [line width=0.5] (1,0) -- (1,-0.12);
\draw [line width=0.5] (2,0) -- (2,-0.12);
\draw [line width=0.5] (3,0) -- (3,-0.12);
\draw [line width=0.5] (4,0) -- (4,-0.12);
\draw [line width=0.5] (5,0) -- (5,-0.12);
\draw [line width=0.5] (6,0) -- (6,-0.12);
\draw [line width=0.5] (7,0) -- (7,-0.12);
\draw [line width=0.5] (8,0) -- (8,-0.12);
\draw [line width=0.5] (9,0) -- (9,-0.12);
\draw [line width=0.5] (10,0) -- (10,-0.12);
\draw [line width=0.5] (11,0) -- (11,-0.12);
\draw [line width=0.5] (12,0) -- (12,-0.12);
\draw [line width=0.5] (13,0) -- (13,-0.12);
\draw [line width=0.5] (14,0) -- (14,-0.12);
\draw [line width=0.5] (15,0) -- (15,-0.12);
\draw [line width=0.5] (-3,1) -- (-3.12,1);
\draw [line width=0.5] (-3,2) -- (-3.12,2);
\draw [line width=0.5] (-3,3) -- (-3.12,3);
\draw [line width=0.5] (-3,0) -- (-3.12,0);
\draw [line width=0.5] (-3,-1) -- (-3.12,-1);
\draw [line width=0.5] (-3,-2) -- (-3.12,-2);
\draw [dotted] (-3,1) -- (15.5,1);
\draw [dotted] (-3,2) -- (15.5,2);
\draw [dotted] (-3,3) -- (15.5,3);
\draw [dotted] (-3,-1) -- (15.5,-1);
\draw [dotted] (-3,-2) -- (15.5,-2);
\draw [dotted] (-2,-2.5) -- (-2,3.5);
\draw [dotted] (-1,-2.5) -- (-1,3.5);
\draw [dotted] (0,-2.5) -- (0,3.5);
\draw [dotted] (1,-2.5) -- (1,3.5);
\draw [dotted] (2,-2.5) -- (2,3.5);
\draw [dotted] (3,-2.5) -- (3,3.5);
\draw [dotted] (4,-2.5) -- (4,3.5);
\draw [dotted] (5,-2.5) -- (5,3.5);
\draw [dotted] (6,-2.5) -- (6,3.5);
\draw [dotted] (7,-2.5) -- (7,3.5);
\draw [dotted] (8,-2.5) -- (8,3.5);
\draw [dotted] (9,-2.5) -- (9,3.5);
\draw [dotted] (10,-2.5) -- (10,3.5);
\draw [dotted] (11,-2.5) -- (11,3.5);
\draw [dotted] (12,-2.5) -- (12,3.5);
\draw [dotted] (13,-2.5) -- (13,3.5);
\draw [dotted] (14,-2.5) -- (14,3.5);
\draw [dotted] (15,-2.5) -- (15,3.5);
\draw (-2,-0.1) node[below, scale=0.8]{$1$};
\draw (-1,-0.1) node[below, scale=0.8]{$2$};
\draw (0,-0.1) node[below, scale=0.8]{$3$};
\draw (1,-0.1) node[below, scale=0.8]{$4$};
\draw (2,-0.1) node[below, scale=0.8]{$5$};
\draw (3,-0.1) node[below, scale=0.8]{$6$};
\draw (4,-0.1) node[below, scale=0.8]{$7$};
\draw (5,-0.1) node[below, scale=0.8]{$8$};
\draw (6,-0.1) node[below, scale=0.8]{$9$};
\draw (7,-0.1) node[below, scale=0.8]{$10$};
\draw (8,-0.1) node[below, scale=0.8]{$11$};
\draw (9,-0.1) node[below, scale=0.8]{$12$};
\draw (10,-0.1) node[below, scale=0.8]{$13$};
\draw (11,-0.1) node[below, scale=0.8]{$14$};
\draw (12,-0.1) node[below, scale=0.8]{$15$};
\draw (13,-0.1) node[below, scale=0.8]{$16$};
\draw (14,-0.1) node[below, scale=0.8]{$17$};
\draw (15,-0.1) node[below, scale=0.8]{$18$};
\draw (-3.1,1) node[left, scale=0.8]{$1$};
\draw (-3.1,2) node[left, scale=0.8]{$2$};
\draw (-3.1,3) node[left, scale=0.8]{$3$};
\draw (-3.1,0) node[left, scale=0.8]{$0$};
\draw (-3.1,-1) node[left, scale=0.8]{$-1$};
\draw (-3.1,-2) node[left, scale=0.8]{$-2$};
\end{tikzpicture}
\end{center}
\caption{Example, in the case $q=3, p=4 $, of a path in the collection $\mathcal{P}_{[18,-1,2]}$ with weight
$a^{(-1)}_{-2}a^{(1)}_{-2}\,a^{(3)}_{-1}\,a^{(0)}_{2}a^{(-4)}_{-2}a^{(2)}_{-2}\,
a^{(-1)}_{-1}a^{(2)}_{-1}\,a^{(2)}_{1}
a^{(-1)}_{2}a^{(-3)}_{-1}a^{(0)}_{-1}\,a^{(-1)}_{-2}a^{(3)}_{-2}\,a^{(-1)}_{0}a^{(-2)}_{-2}a^{(3)}_{-2}\,a^{(1)}_{1}$.}
\label{7-1-22-2}
\end{figure}

In what follows we discuss first the construction of the matrix continued fraction associated with the collections of paths $\mathcal{D}_{[n,i,j]}$, for more details see Section \ref{MCF}. For each collection $\mathcal{D}_{[n,i,j]}$ we define the corresponding \emph{weight polynomial} $A_{[n,i,j]}$ given by the formula
\begin{equation}\label{def:weightAnij}
A_{[n,i,j]} :=\sum_{\gamma\in\mathcal{D}_{[n,i,j]}}w(\gamma),
\end{equation}
which we use to generate the formal Laurent series
\[
A_{i,j}(z)=\sum_{n=0}^{\infty}\frac{A_{[n,i,j]}}{z^{n+1}},\qquad i,j\geq 0,
\]				
where $A_{[0,i,j]}=1$ if $i=j$ and $A_{[0,i,j]}=0$ if $i\neq j$. Consider the $q\times p$ matrix 
\[
F(z)=\begin{pmatrix}
A_{0,0}(z) &\cdots & A_{0, p-1}(z)\\
\vdots &\ddots& \vdots\\
A_{q-1, 0}(z) & \cdots&A_{q-1, p-1}(z)\\
\end{pmatrix},
\]
with entries $A_{i,j}(z)$, $0\leq i\leq q-1$, $0\leq j\leq p-1$ just defined.
\par
One of the main results of the paper is Theorem \ref{TH6-8-23-1} in Section \ref{MCF}, which establishes the following matrix continued fraction representation for the matrix $F(z)$:
\begin{equation}\label{6-1-23}
F(z)=\cfrac{{\bf 1}}{\alpha_{0}(z)+\alpha^{+}_0\cfrac{{\bf 1}}{\alpha_{1}(z)+\alpha^{+}_1\cfrac{{\bf 1}}{\alpha_{2}(z)+\alpha^{+}_2\cfrac{{\bf 1}}
{\alpha_{3}(z)+\ddots}\,\alpha^{-}_2}\,\alpha^{-}_1}\,\alpha^{-}_0}. 
\end{equation}
We make here several important observations concerning \eqref{6-1-23}. This and all other continued fractions presented in this work should be understood as a composition of certain transformations (see e.g. formula \eqref{finiteMCFforF}) between matrices of size $q\times p$, which in the case of \eqref{6-1-23} are defined by 
\[
\tau_{\alpha,k}(X)=\frac{\mathbf{1}}{\alpha_{k}(z)+\alpha_{k}^{+}\,X\,\alpha_{k}^{-}}.
\]
These transformations use a pseudo-quotient operation $A=\mathbf{1}/B$ defined in \eqref{opinv1}--\eqref{26-4-4}, which is the main ingredient in the construction of our matrix continued fractions.

The matrix coefficients that appear in \eqref{6-1-23} are defined as follows. For each $k\geq 0$, $\alpha^{+}_k$ denotes the $q\times q$  matrix
\begin{equation}\label{6-9-23-10}
\alpha^{+}_k=\begin{pmatrix}
1 & 0 & \cdots & 0 & 0\\
0 & 1 & \cdots & 0 & 0\\
\vdots & \vdots & \ddots & \vdots & \vdots \\
0 & 0 & \cdots & 1 & 0\\
-a^{(1)}_k & -a_{k}^{(2)} & \cdots & -a^{(q-1)}_k & -a_{k}^{(q)}
\end{pmatrix},
\end{equation}
$\alpha^{-}_k$ is the $p\times p$  matrix
\begin{equation}\label{6-9-23-11}
\alpha^{-}_k=\begin{pmatrix}
1 & 0 & \cdots & 0 & a_{k}^{(-1)}\\[0.2em]
0 & 1 & \cdots & 0 & a_{k}^{(-2)}\\
\vdots & \vdots & \ddots & \vdots & \vdots \\
0 & 0 & \cdots & 1 & a_{k}^{(-p+1)}\\[0.1em]
0 & 0 & \cdots & 0 & a_{k}^{(-p)}
\end{pmatrix},
\end{equation}
and $\alpha_{k}(z)$ is the $q\times p$ matrix
\begin{equation}\label{6-9-23-12}
\alpha_{k}(z)=\begin{pmatrix}
0 &\cdots & 0 & 0\\
\vdots &\ddots& \vdots & \vdots\\
0 & \cdots & 0 & 0 \\
0 & \cdots & 0 & z-a^{(0)}_k
\end{pmatrix}.
\end{equation}
To better understand the derivation of the different matrix continued fractions in this paper, it is crucial to observe the location in the matrix $H$ of the entries $a_{k}^{(j)}$ that appear in the matrices $\alpha_{k}(z), \alpha_{k}^{+}, \alpha_{k}^{-}$ (all other continued fractions are constructed similarly). The entries chosen are the $(k+1)$-st entry on the main diagonal of $H$ (for $\alpha_{k}(z)$), the entries from the $(k+1)$-st row located to the right of the main diagonal (for $\alpha_{k}^{+}$), and the entries from the $(k+1)$-st column located below the main diagonal (for $\alpha_{k}^{-}$).
\par
Theorem \ref{TH6-8-23-1} gives a combinatorial interpretation for a class of matrix continued fractions in terms of lattice paths, in the spirit of the previous work by Flajolet \cite{Fla} and Viennot \cite{Viennot} on combinatorial properties of continued fractions in the scalar case $p=q=1$. Previously, L\'{o}pez-Garc\'{i}a and Prokhorov \cite{LopPro3} studied the case $q=1$, $p\geq 1$, giving a combinatorial interpretation for a class of vector continued fractions, and studying related spectral properties of banded Hessenberg operators. 
\par
The key element in the proof of  \eqref{6-1-23} is Theorem \ref{theorem1} in Section \ref{Lattice paths}, which establishes algebraic relations between the formal Laurent series $A_{i,j}(z)$ associated with the collections $\mathcal{D}_{[n,i,j]}$ of paths restricted to the plane $y\geq 0$, and the Laurent series $A_{i,j}^{(1)}(z)$ associated with collections $\mathcal{D}^{(1)}_{[n,i,j]}\subset\mathcal{D}_{[n,i+1,j+1]}$ of paths restricted to the plane $y\geq 1$. The paths in the collection $\mathcal{D}^{(1)}_{[n,i,j]}$, $i,j \geq 0$, can be obtained by shifting the paths in the collection $\mathcal{D}_{[n,i,j]}$ one unit upwards.
We present two separate proofs of Theorem \ref{theorem1}. In Section \ref{Lattice paths}, the proof relies solely on combinatorial arguments, and in Section \ref{sec:rfH} we give an alternative proof (see Theorem \ref{theorem3}) that uses an approach based on the resolvents of the banded matrix $H$.
\par
Another essential contribution of the paper is Theorem \ref{TH6-9-23-1} in Section \ref{MCF}, which establishes the continued fraction representation for a $q\times p$ matrix of formal Laurent series associated with the collections $\{\mathcal{P}_{[n,i,j]}\}_{n\geq 0}$ of unrestricted lattice paths. We briefly discuss this result below. Recall that $\mathcal{P}_{[n,i,j]}$ consists of paths of length $n$ that start at the point $(0,i)$, end at the point $(n,j)$, and are allowed to go below the $x$-axis. The weight polynomial associated with the collection $\mathcal{P}_{[n,i,j]}$ is denoted $W_{[n,i,j]}$ and is defined similarly to \eqref{def:weightAnij} by the formula
\[
W_{[n,i,j]}:=\sum_{\gamma\in\mathcal{P}_{[n,i, j]}}w(\gamma),
\] 
where $w(\gamma)$ denotes the weight of a path $\gamma$. For integers $i, j$ we define the formal Laurent series $W_{i,j}(z)$ generated by the weight polynomials $W_{[n,i,j]}$:
\[
W_{i,j}(z)  :=\sum_{n=0}^{\infty}\frac{W_{[n,i,j]}}{z^{n+1}},
\]
where $W_{[0,i,j]}=1$ if $i=j$ and $W_{[0,i,j]}=0$ if $i\neq j$. Consider the $q\times p$ matrix
\[
G(z)=\begin{pmatrix}
W_{0,0}(z) &\cdots & W_{0, p-1}(z)\\
\vdots &\ddots& \vdots\\
W_{q-1, 0}(z) & \cdots&W_{q-1, p-1}(z)\\
\end{pmatrix},
\]
with entries $W_{i,j}(z)$, $0\leq i\leq q-1$, $0\leq j\leq p-1$. According to Theorem \ref{TH6-9-23-1}, the matrix $G(z)$ can be expressed as a matrix continued fraction in the following form:
\begin{equation}\label{6-6-23-1}
G(z)=\cfrac{{\bf 1}}{\beta_{0}(z)+\beta^{+}_0\cfrac{{\bf 1}}{\beta_{1}(z)+\beta^{+}_1\cfrac{{\bf 1}}{\beta_{2}(z)+\beta^{+}_2\cfrac{{\bf 1}}
{\beta_{3}(z)+\ddots}\,\beta^{-}_2}\,\beta^{-}_1}\,\beta^{-}_0}, 
\end{equation}
where the matrix coefficients in \eqref{6-6-23-1} are defined in \eqref{6-9-23-40}--\eqref{6-9-23-53}. The matrices used in this continued fraction are constructed by extracting entries from a banded matrix $K=(K_{i,j})_{ i,j=0}^{\infty}$, which has the following entries
\begin{equation}\label{defKintro}
\begin{aligned}
K_{i,j} &=h_{i,j}+\sum ^{p-i}_{l=1}\sum^{q-j}_{m=1}a^{(-(l+i))}_{-l}a^{(m+j)}_{-m}V_{-l,-m}(z), \qquad  0\le i \le p-1,\quad  0 \le j \le
 q-1,\\
K_{i,j} &=h_{i,j}, \qquad \mbox{otherwise},
\end{aligned}
\end{equation}
where $h_{i,j}$ is the $(i,j)$-entry of $H$. These entries in the banded matrix $K$ are identical to those in the matrix $H$ except for the $p \times q$ submatrix located in the top left corner. The entries in this special submatrix are expressed in terms of the entries of the matrix $W$ and the Laurent series $V_{i,j}(z)$ associated with collections of lattice paths that \textit{never go above the line $y=-1$}. For detailed explanations, including precise formulas and proofs regarding the matrix continued fraction \eqref{6-6-23-1}, see Sections \ref{Wmatrix} and \ref{MCF}.
\par
The proof of \eqref{6-6-23-1} relies on Theorem \ref{10-3-22-1} in Section \ref{Wmatrix}, which provides a crucial tool. This theorem enables us to simplify the task of finding the matrix continued fraction for $G(z)$ by reducing it to finding the matrix continued fraction constructed from the matrix $K$.
\par
It is important to emphasize that in both Theorems \ref{10-3-22-1} and \ref{theorem3}, we used Theorem \ref{theorem2} in Section \ref{sec:lattpathoper}, which shows that the Laurent series $A_{i,j}(z)$ and $W_{i,j}(z)$ can be characterized as  resolvent functions associated with the banded matrices $H$ and $W$, respectively. 
\par
To the best of our knowledge, \eqref{6-6-23-1} is the first matrix continued fraction expansion in the literature for resolvents of two-sided difference operators of an arbitrary finite order. The interesting form of a double continued fraction that \eqref{6-6-23-1} takes in the scalar case $p=q=1$ is discussed in subsection \ref{subsecscf}, see \eqref{scalardcf}. The scalar case $p=q=1$ is of course well researched; relatively recent important works are \cite{DIW,DKS,MasRep,Nik,Simon}. Observe that the expansion \eqref{6-1-23} is valid for arbitrary one-sided difference operators of finite order.

Matrix continued fractions have been investigated extensively by Sorokin and Van Iseghem \cite{SorVanIseg1,SorVanIseg2,SorVanIseg3,VanIseg2,VanIseg4}, in connection with closely related subjects such as matrix Hermite-Pad\'{e} approximation, vector and matrix orthogonality, vector recurrence relations, and discrete dynamical systems. This paper extends their work in several respects. First, we have obtained our results in the context of general one-sided and two-sided difference operators of arbitrary finite order. Second, as it is discussed below, we have estimated the degree of approximation to the resolvents of $H$ by the resolvents of the principal $n\times n$ truncations $H_{n}=(h_{i,j})_{i,j=0}^{n-1}$ of $H$, of which we have explicit formulas, see \eqref{rationalapprox1}--\eqref{rationalapprox5}. Third, we show that the matrix continued fractions of the approximating resolvents of $H_{n}$ can be regarded as special convergents of the matrix continued fraction for $H$.  

In the vector case $q=1$, $p\geq 1$, some important recent works on the algebraic and analytic aspects of vector continued fractions are \cite{AptKal,AptKalVan,AptKalSaff,Kal,Kal2,NikSor,VanIseg3,VanIseg4}, see also \cite{LopPro3}.

\subsection{Random banded matrices and their characteristic  polynomials}

The investigation of the distribution of eigenvalues of random banded matrices is another essential part of the paper. We consider a class of random banded matrices $H$, as defined in \eqref{matrixH}, with $p+q+1$ diagonals, where the entries are independent and bounded random variables. We assume that entries in the same diagonal have identical distributions, but the distributions may be different for different diagonals. 
\par
For the principal $n\times n$ truncation $H_{n}=(h_{i,j})_{i,j=0}^{n-1}$ of the banded matrix $H$, we analyze the asymptotic behavior of its empirical spectral distribution as $n$ approaches infinity. Theorem \ref{10-26-2022} in Section \ref{randompol} specifically addresses the asymptotic behavior of the expected values of the eigenvalue moments for $H_{n}$ and its connection with the Laurent series $W_{0,0}(z)$.
\par
This result extends the work of  L\'{o}pez-Garc\'{i}a and Prokhorov \cite{LopPro2}, which was  focused on the case $q=1$, $p\geq 1$. Certain combinatorial aspects of the theory of random characteristic polynomials in the case $p=q=1$ were investigated in \cite{LopPro}.

\subsection{Resolvent functions of the principal $n\times n$ truncation of the banded matrix $H$ and rational approximation}

In Section \ref{12-12-22-1} we investigate the resolvent functions associated with the principal $n\times n$ truncation $H_{n}$ of the banded matrix $H$ and their relationship to rational approximation. We prove that the $q\times p$ matrix of resolvents associated with $H_n$  approximates the matrix $F(z)$ of resolvents for $H$. Furthermore, we express the matrix of resolvents for $H_{n}$ as a finite matrix continued fraction (see Proposition \ref{prop:MCFforRn}), and show that it can be regarded as a convergent for the matrix continued fraction for $F(z)$.

Let $Q_{n}(z)=\det(z I_{n}-H_{n})$ denote the characteristic polynomial of the matrix 
$H_{n}$, which is a monic polynomial of degree $n$.
We define the resolvent functions as
\begin{equation}\label{rationalapprox1}
R_{i,j,n}(z)=\langle (zI_n-H_{n})^{-1} e_{j}, e_{i}\rangle,
\end{equation}
where $\{e_{i}\}_{i=0}^{n-1}$
represents the standard basis in $\mathbb{C}^{n}$. These resolvent functions $R_{i,j,n}(z)$, $0\leq i,j \leq n-1$, are rational functions and can be expressed as 
\begin{equation}\label{rationalapprox2}
R_{i,j,n}(z)=\frac{P_{i,j,n}(z)}{Q_n(z)},
\end{equation}
where the numerators $P_{i,j,n}(z)$ are polynomials of degree at most $n-1$, given by the expression 
\begin{equation}\label{rationalapprox3}
P_{i,j,n}(z) =(-1)^{i+j}\det((z I_n-H_{n})^{[j,i]}).
\end{equation}
Here, $(zI_n-H_{n})^{[j,i]}$ denotes the submatrix of $zI_n-H_{n}$ obtained by removing the $j$-th row and the $i$-th column.
\par 
In Section \ref{12-12-22-1} we show that the rational functions $R_{i,j,n}(z)$ can be used as rational approximants for the formal power series $A_{i,j}(z)$ in the range $0\leq i,j \leq n-1$. More precisely, the coefficients in the Laurent series expansions of $A_{i,j}(z)$ and $R_{i,j,n}(z)$ must match up to a certain order. As $n$ approaches infinity, this order increases to infinity like $n(1/p + 1/q)$. In more detail, for $0\leq i,j\leq n-1$ we have
\begin{equation}\label{rationalapprox4}
A_{i,j}(z)-R_{i,j,n}(z)=O(z^{-L-2}), \qquad z\rightarrow \infty,
\end{equation}
where 
\begin{equation}\label{rationalapprox5}
L=\left[\frac{n-1-i}{q}\right] + \left[\frac{n-1-j}{p}\right]+1,
\end{equation}
and $[\cdot]$ represents the integer part. 
\par
Let $R_n(z)$ denote the $q\times p$ matrix
\begin{equation}\label{defRnintro}
R_n(z):=\begin{pmatrix}
R_{0,0, n}(z) &\cdots & R_{0, p-1, n}(z)\\
\vdots &\ddots& \vdots\\
R_{q-1, 0, n}(z) & \cdots & R_{q-1, p-1, n}(z)\\
\end{pmatrix},
\end{equation}
where the $(i,j)$-entry is given by the rational function $R_{i,j,n}(z)$. In this paper we show that the matrix $R_n(z)$ is a special type of convergent for the matrix continued fraction expansion \eqref{6-1-23} of the matrix $F(z)$ (for more details, see the discussion after the proof of Theorem \ref{5-16-23-11} in Section \ref{12-12-22-1}).
\par
It is important to emphasize that in the case $p=q=1$, the rational function $R_{0,0,n}(z)$ is the classical Pad\'{e} approximant of the Laurent series $A_{0,0}(z)$. Additionally, when $q=1$ and $p\geq 1$, the vector $(R_{0,0, n}(z)\,\,\ldots\,\,R_{0, p-1, n}(z))$ is a vector of Hermite-Pad\'{e} approximants (of type II) for the vector of formal series $(A_{0,0}(z)\,\,\ldots\,\,A_{0, p-1}(z)).$ Hermite-Pad\'{e} approximation to systems of resolvent functions of difference operators have been investigated extensively in recent decades, with notable works by Kalyagin \cite{Kal,Kal2}, Aptekarev--Kaliaguine \cite{AptKal}, Aptekarev--Kaliaguine--Van Iseghem \cite{AptKalVan}, Van Iseghem \cite{VanIseg,VanIseg3,VanIseg4}, see also \cite{LopPro2,LopPro3,NikSor,RobSan}. In \cite{SorVanIseg1,SorVanIseg2,SorVanIseg3,VanIseg2,VanIseg4}, Sorokin and Van Iseghem have studied different matrix Hermite-Pad\'{e} problems.

\subsection{Organization of the paper}

This paper is organized as follows. In Section \ref{Lattice paths}, we introduce the precise definitions and notations related to lattice paths, weight polynomials, and spaces of formal Laurent series with complex coefficients, and we prove the key algebraic relations in Theorem~\ref{theorem1} between the formal series $A_{i,j}(z)$ and $A_{i,j}^{(1)}(z)$ using combinatorial arguments. In Section \ref{sec:lattpathoper} we prove Theorem \ref{theorem2}, which characterizes the generating series $A_{i,j}(z)$ and $W_{i,j}(z)$ as resolvent functions of one-sided and two-sided operators, respectively. In Section 4 we introduce formal series with operator coefficients, and give in Theorem \ref{theorem3} an alternative proof of Theorem \ref{theorem1}, based on matrix partitioning and formulas for the inverse of partitioned matrices. Section \ref{Wmatrix} is devoted to the analysis of the resolvent functions of the matrix $W$ in \eqref{matrixW} and their relation to the matrix $K=(K_{i,j})_{i,j=0}^{\infty}$ in \eqref{defKintro}, which is the crucial ingredient for the construction of the matrix continued fraction \eqref{6-6-23-1}. In Section \ref{MCF} we deduce the matrix continued fractions \eqref{6-1-23} and \eqref{6-6-23-1} in our main Theorems \ref{TH6-8-23-1} and \ref{TH6-9-23-1}. Section \ref{12-12-22-1} discusses the approximation property of the resolvents of the principal $n\times n$ truncations $H_{n}$ to the resolvents of $H$ using some elementary combinatorial arguments based on lattice paths, as well as the matrix continued fraction for the matrix \eqref{defRnintro}. Finally, in the last section we prove our result on characteristic polynomials of random banded matrices. 

\section{Lattice paths and generating series for weight polynomials}\label{Lattice paths}
Fix integers $p,q \geq 1$ throughout the paper. Denote by $\mathcal{G}=(V,E)$ the oriented graph with set of vertices $V:=\mathbb{Z}_{\geq 0}\times \mathbb{Z}$ and set $E$ of edges (steps) of the following form:
\begin{equation}\label{defsteps}
\begin{aligned}
& \mbox{upsteps}\,\,\,(n,m)\rightarrow(n+1,m+i)\,\,\,\mbox{by $i$ units},\,\,1 \leq i \leq q, \\
& \mbox{level steps}\,\,\,(n,m)\rightarrow(n+1,m), \,\,\,\, \\
& \mbox{downsteps}\,\,\,(n,m)\rightarrow(n+1,m-j)\,\,\,\mbox{by $j$ units},\,\,1\leq j \leq p,\\
\end{aligned}
\end{equation}
where the notation $v\rightarrow v'$ indicates the step from vertex $v$ to vertex $v'$. The difference in height between the vertices in an upstep is a value $i\in\{1,\ldots,q\}$, and
the difference in height between the vertices in a downstep is a value $j\in\{1,\ldots,p\}.$
A \emph{lattice path} on $\mathcal{G}$  of length $k$
is a  sequence of  $k$ steps
\[
\gamma=e_{1}e_{2}\cdots e_{k},
\]
where for each $1\leq j\leq k-1,$ the final vertex of $e_{j}$ conicides with the initial vertex of $e_{j+1}$.  A path of length zero is simply a vertex in $V$. If $(n,m)\in V$ is a vertex in the path $\gamma$, we say that $\gamma$ has height $m$ at $n$.

As above, for each $-p\leq k\leq q$, let $(a^{(k)}_{n})_{n\in\mathbb{Z}}$ be a bi-infinite sequence of complex numbers.
To each step we give a \emph{weight} as follows:
\begin{equation}\label{6-29-22-2}
\begin{aligned}
w((n,m)\rightarrow (n+1,m+i)) & =a_{m}^{(i)},\qquad 0\leq i\leq q,\\
w((n,m)\rightarrow (n+1,m-j)) & =a_{m-j}^{(-j)},\qquad 1\leq j\leq p.
\end{aligned}
\end{equation}
The weight of a path $\gamma$ is defined by
\[
w(\gamma)=\prod_{e\subset\gamma}w(e),
\]
where the product runs over the different steps of $\gamma$. The weight of a path of length zero is by definition $1$.

If $\mathcal{S}$ is a finite collection of lattice paths, the expression $\sum_{\gamma\in\mathcal{S}}w(\gamma)$ will be called the \textit{weight polynomial associated with $\mathcal{S}$}. If $\mathcal{S}$ is the empty collection, its weight polynomial is understood to be zero. 

For each $n\geq 0$ and $i,j \in \mathbb{Z}$, we denote by $\mathcal{P}_{[n, i, j]}$ the collection of all paths of length $n$, with initial
point $(0,i)$ and final point $(n,j)$. Let $\mathcal{D}_{[n,i,j]}$, $i,j \geq 0$, be the subcollection of $\mathcal{P}_{[n,i,j]}$ consisting of those paths with no point below the $x$-axis.  

The weight polynomials associated with the collections $\mathcal{D}_{[n,i,j]}$ and $\mathcal{P}_{[n,i, j]}$ are denoted
\begin{align*}
A_{[n,i,j]} & :=\sum_{\gamma\in\mathcal{D}_{[n,i,j]}}w(\gamma)\qquad n \geq 0, \quad i,j \geq 0,\\
W_{[n,i,j]} & :=\sum_{\gamma\in\mathcal{P}_{[n,i, j]}}w(\gamma)\qquad n \geq 0, \quad i,j \in \mathbb{Z}.
\end{align*} 
In our analysis we will also need one more collection of lattice paths. 
Let $$\mathcal{D}^{(1)}_{[n,i,j]}, \ \ n \geq 0,\quad i,j \geq 0,$$ be the subcollection of $\mathcal{D}_{[n,i+1,j+1]}$ consisting of those paths with no point below the line $y=1$. The paths in the collection $\mathcal{D}^{(1)}_{[n,i,j]}$ can be obtained by shifting the paths in the collection $\mathcal{D}_{[n,i,j]}$ $1$ unit upwards. We denote by $\gamma+1$ the path resulting from this operation applied to $\gamma$. So we have 
$$
\mathcal{D}^{(1)}_{[n,i,j]}:=\{\gamma+1: \gamma\in \mathcal{D}_{[n,i,j]}\}.
$$
The weight polynomial associated with $\mathcal{D}^{(1)}_{[n,i,j]}$ is 
\[
A_{[n,i,j]}^{(1)} :=\sum_{\gamma\in\mathcal{D}^{(1)}_{[n,i,j]}}w(\gamma)=\sum_{\gamma\in\mathcal{D}_{[n,i,j]}}w(\gamma+1)
\qquad n \geq 0,\quad i,j \geq 0.
\]
Note that the formula for $A_{[n,i,j]}^{(1)}$ is obtained from the formula for $A_{[n,i,j]}$ by replacing each term $a_{m}^{(k)}$ by $a_{m+1}^{(k)}$ in the expression of $A_{[n,i,j]}$.

We introduce now certain formal Laurent series generated by the sequences of weight polynomials defined. These series are considered as elements of the algebraic field $\mathbb{C}((z^{-1}))$ (see e.g. \cite[Chapter 2]{NikSor}) consisting of all formal power series
\[
a(z)=\sum_{n\in\mathbb{Z}}a_{n} z^{n}
\]
with complex coefficients such that only finitely many $a_{n}$ with $n>0$ are non-zero. The sum and product of two series $a(z)=\sum_{n\in\mathbb{Z}} a_{n} z^{n}$ and $b(z)=\sum_{n\in\mathbb{Z}} b_{n} z^{n}$ are defined by
\begin{align*}
(a+b)(z) & =\sum_{n\in\mathbb{Z}}(a_{n}+b_{n})\,z^{n},\\
(a\cdot b)(z) & =\sum_{n\in\mathbb{Z}}\Big(\sum_{l\in\mathbb{Z}} a_{l}\,b_{n-l}\Big) z^{n}.
\end{align*}
Observe that in the second formula the inner sum is finite. The degree of a series $a(z)=\sum_{n\in\mathbb{Z}} a_{n} z^{n}$ is by definition $\deg(a(z))=\max\{n\in\mathbb{Z}: a_{n}\neq 0\}$. We will use the standard notation $[z^n]\,a(z)$ to indicate the coefficient of $z^{n}$ in the series $a(z)$.

For nonnegative integers $i, j$, let
\begin{align}
A_{i,j}(z) & :=\sum_{n=0}^{\infty}\frac{A_{[n,i,j]}}{z^{n+1}}, \label{7-1-22-6}\\
A_{i,j}^{(1)}(z) & :=\sum_{n=0}^{\infty}\frac{A_{[n,i,j]}^{(1)}}{z^{n+1}}, \label{7-1-22-7}
\end{align}
and for integers $i, j$, we define
\begin{equation}\label{7-1-22-8}
W_{i,j}(z)  :=\sum_{n=0}^{\infty}\frac{W_{[n,i,j]}}{z^{n+1}}.
\end{equation}

Suppose that $i\leq j$, and let $m$ be the smallest integer such that $mq\geq j-i$. Then for $0\leq r<m$ we have $rq<j-i$, and so $\mathcal{D}_{[r,i,j]}=\emptyset$ and $A_{[r,i,j]}=0$. Therefore $A_{i,j}(z)=O(z^{-m-1})$. Similarly, if $i>j$ and $s$ is the smallest integer such that $sp\geq i-j$, then it is easy to see that $A_{i,j}(z)=O(z^{-s-1})$. The same estimates are valid for the series \eqref{7-1-22-7} and \eqref{7-1-22-8}. 

\begin{theorem}\label{theorem1}
The following relations hold between the series defined in \eqref{7-1-22-6} and \eqref{7-1-22-7}:
\begin{align}
A_{0,0}(z) & =\frac{1}{z-a_{0}^{(0)}-\sum_{i=1}^{q}\sum_{j=1}^{p}a_{0}^{(i)}a_{0}^{(-j)}\,A_{i-1,j-1}^{(1)}(z)}, \label{7-2-22-1}\\
A_{0,j}(z) & =A_{0,0}(z)\sum_{i=1}^{q} a_{0}^{(i)}A_{i-1,j-1}^{(1)}(z),  \qquad j\geq 1,\label{7-2-22-2}\\
A_{i,0}(z) & =A_{0,0}(z)\sum_{j=1}^{p} a_{0}^{(-j)}A_{i-1,j-1}^{(1)}(z), \qquad i\geq 1, \label{7-2-22-3}\\
A_{i,j}(z) & =\frac{A_{i,0}(z)\,A_{0,j}(z)}{A_{0,0}(z)}+A^{(1)}_{i-1,j-1}(z), \qquad i, j\geq 1.
\label{6-22-22}
\end{align}
\end{theorem}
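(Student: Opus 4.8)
The plan is to prove all four identities by a single mechanism: a bijective first-passage decomposition of the nonnegative lattice paths counted by $A_{i,j}(z)$, read off as an identity in $\mathbb{C}((z^{-1}))$, in the spirit of Flajolet and Viennot. The organizing idea is that each identity comes from cutting a path at its visits to the $x$-axis (height $0$); every segment that remains strictly above the axis stays at height $\geq 1$ and is, after shifting down one unit, exactly a path in $\mathcal{D}^{(1)}$, whose weights are obtained by replacing each $a_m^{(k)}$ with $a_{m+1}^{(k)}$ and whose generating series are therefore the $A^{(1)}_{\cdot,\cdot}(z)$ on the right-hand sides. To keep the bookkeeping clean I would work with the renormalized series $\hat A_{i,j}(z):=zA_{i,j}(z)=\sum_{n\geq 0}A_{[n,i,j]}z^{-n}$, in which the empty path contributes $1$ and each step contributes $z^{-1}$ times its weight, so that concatenation of paths corresponds to multiplication of $\hat A$-series; the global factor $z^{-1}$ is divided out only at the end.

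First I would prove \eqref{7-2-22-1} by a first-return decomposition. A nonnegative path from $(0,0)$ to $(n,0)$ with $n\geq 1$ either begins with a level step at height $0$ (weight $a_0^{(0)}$, by the lowest-point convention \eqref{6-29-22-2}), or begins with an up-step to some height $i\in\{1,\dots,q\}$ (weight $a_0^{(i)}$), stays at height $\geq 1$ until its first return to $0$, and reaches $0$ via a down-step from some height $j\in\{1,\dots,p\}$ (weight $a_0^{(-j)}$), the intermediate block being a $\mathcal{D}^{(1)}$ path from level $i-1$ to level $j-1$. Writing $B$ for the series of first-return blocks yields $\hat A_{0,0}=1/(1-B)$ with $B=z^{-1}a_0^{(0)}+z^{-2}\sum_{i=1}^{q}\sum_{j=1}^{p}a_0^{(i)}a_0^{(-j)}\hat A^{(1)}_{i-1,j-1}$, and substituting $\hat A=zA$ gives \eqref{7-2-22-1}. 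For \eqref{7-2-22-2} and \eqref{7-2-22-3} I would cut at the last, respectively first, visit to height $0$: a path from $0$ to $j\geq 1$ is a path from $0$ to $0$, then an up-step off the axis, then a strictly-positive tail reaching $j$, while a path from $i\geq 1$ to $0$ is a strictly-positive head leaving $i$, then a down-step onto the axis, then a path from $0$ to $0$. These give $\hat A_{0,j}=\hat A_{0,0}\sum_{i=1}^{q}z^{-1}a_0^{(i)}\hat A^{(1)}_{i-1,j-1}$ and $\hat A_{i,0}=\big(\sum_{j=1}^{p}z^{-1}a_0^{(-j)}\hat A^{(1)}_{i-1,j-1}\big)\hat A_{0,0}$, which reduce to \eqref{7-2-22-2} and \eqref{7-2-22-3}.

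Finally, for \eqref{6-22-22} with $i,j\geq 1$ I would split according to whether the path touches the axis. If it never does, it stays at height $\geq 1$ throughout and contributes exactly $\hat A^{(1)}_{i-1,j-1}$. If it does touch the axis, cutting at both the first and the last visit to $0$ factors it as (head to first $0$) followed by (path from $0$ to $0$) followed by (tail from last $0$); recognizing the head as $\hat A_{i,0}/\hat A_{0,0}$ and the tail as $\hat A_{0,j}/\hat A_{0,0}$ from the previous step gives $\hat A_{i,j}=\hat A_{i,0}\hat A_{0,j}/\hat A_{0,0}+\hat A^{(1)}_{i-1,j-1}$, which is \eqref{6-22-22} after clearing the $z$-factors. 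Note that $\hat A_{0,0}$ is invertible in $\mathbb{C}((z^{-1}))$ since its constant term equals $1$, so these quotients are legitimate.

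The main thing requiring care — and essentially the only subtle point — is verifying that these cut-and-paste maps are genuine bijections and that weights multiply correctly across the cuts. Uniqueness rests on ``first/last visit to $0$'' being well defined, and in the argument for $A_{i,j}$ one must check that the ``touches $0$'' and ``never touches $0$'' cases are disjoint and exhaustive, so no path is double counted. The weight factorization requires applying the lowest-point convention \eqref{6-29-22-2} at each cut, so that the up-step leaving $0$ contributes $a_0^{(i)}$, the down-step arriving at $0$ contributes $a_0^{(-j)}$, and each strictly-positive block carries precisely the shifted weights defining $A^{(1)}_{i-1,j-1}$. I would also confirm the short-path and boundary cases (length $0$, and the least $n$ for which a collection is nonempty) so that the initial coefficients in \eqref{7-1-22-6}–\eqref{7-1-22-7} are respected by the identities.
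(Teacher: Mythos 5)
Your proof is correct and follows essentially the same route as the paper's own combinatorial proof of Theorem \ref{theorem1}: decompositions of nonnegative paths at their visits to height $0$, with each strictly positive block identified, after a unit shift, with the series $A^{(1)}_{i-1,j-1}(z)$, and with weights multiplying across the cuts by the lowest-point convention \eqref{6-29-22-2}. The only cosmetic differences are that for \eqref{7-2-22-1} the paper uses a first-step decomposition (yielding $zA_{0,0}(z)-1=a_{0}^{(0)}A_{0,0}(z)+\sum_{i=1}^{q}a_{0}^{(i)}A_{i,0}(z)$) combined with \eqref{7-2-22-3} instead of your first-return renewal identity $\hat A_{0,0}=1/(1-B)$, and for \eqref{6-22-22} it cuts only at the first visit to $0$ and then substitutes \eqref{7-2-22-3}, rather than cutting at both the first and last visits.
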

\begin{proof}
First we prove \eqref{7-2-22-2}, so let $j\geq 1$ be fixed. If $n$ is a nonnegative integer and $qn \leq j-1$, then $\mathcal{D}_{[n,0,j]}=\emptyset$ and $A_{[n,0,j]}=0$, because a lattice path starting at height $0$ cannot reach height $j$ in $n$ steps. So assume $qn\geq j$ and let $\gamma$ be a path in $\mathcal{D}_{[n,0,j]}$. We can find in $\gamma$ a unique upstep with the following property: it is the last upstep of the form $(\kappa,0)\rightarrow (\kappa+1,i)$,  $1\leq i \leq q$, as we cross the path from left to right. In other words, it is the last upstep that starts at height $0$. This step clearly exists since the path starts at height $0$ and ends at height $j\geq 1$. We denote the abscissa of the initial point of this unique upstep with the symbol $\kappa(\gamma)$.
\par
 The path $\gamma\in\mathcal{D}_{[n,0,j]}$ can be subdivided into three parts $\gamma_{1}$, $\gamma_{2}$, $\gamma_{3}$. The piece $\gamma_{1}$ is the portion of $\gamma$ on the interval $[0,\kappa(\gamma)]$. The piece $\gamma_{2}$ is the unique upstep mentioned above $(\kappa(\gamma),0)\rightarrow(\kappa(\gamma)+1,i)$,  $1\leq i \leq q$, and $\gamma_{3}$ is the portion of $\gamma$ on the interval 
$[\kappa(\gamma)+1, n]$. We note that 
\begin{equation}\label{7-3-22-2}
w(\gamma) = w(\gamma_{1}) \cdot w(\gamma_{2}) \cdot w(\gamma_{3}).
\end{equation}
The first piece $\gamma_{1}$ begins at $(0,0)$ and ends at $(\kappa(\gamma),0)$, so it is a path in $\mathcal{D}_{[\kappa(\gamma),0,0]}$ since $\gamma_{1}$ has length $\kappa(\gamma)$ and starts and ends at height 0. The second piece $\gamma_{2}$ is an upstep, so by \eqref{6-29-22-2} we have $w(\gamma_{2})=a_{0}^{(i)}$. The third piece $\gamma_{3}$ begins at $(\kappa(\gamma)+1,i)$ and ends at $(n,j)$. Since $\gamma_{3}$ has no point below the line $y=1$, $\gamma_{3}$ can be identified with a horizontal translation of a path in $\mathcal{D}^{(1)}_{[n-\kappa(\gamma)-1,i-1,j-1]}$.

Let us now write the formula for the weight polynomial $A_{[n,0,j]}$. 
Taking into account \eqref{7-3-22-2}, the fact that $i$ varies from $1$ to $q$, and the possible values for $\kappa(\gamma)$ between $0$ and $n-1$, we conclude that 
\[
A_{[n,0,j]}=\sum_{\gamma\in\mathcal{D}_{[n,0,j]}}w(\gamma)=\sum_{\kappa=0}^{n-1}\sum_{i=1}^{q}A_{[\kappa,0,0]}\,a_{0}^{(i)}\,A_{[n-\kappa-1,i-1,j-1]}^{(1)},\qquad n\geq 1.
\]
Note also that $A_{[0,0,j]}=0$ since $j\geq 1$. Consequently, we deduce 
\begin{align*}
A_{0,j}(z) &=\sum_{n=1}^{\infty}\frac{A_{[n,0,j]}}{z^{n+1}}=\sum_{n=1}^{\infty}\frac{1}{z^{n+1}}\sum_{\kappa=0}^{n-1}\sum_{i=1}^{q}A_{[\kappa,0,0]}\,a_{0}^{(i)}\,A_{[n-\kappa-1,i-1,j-1]}^{(1)} \\ &=A_{0,0}(z)\sum_{i=1}^{q} a_{0}^{(i)}A_{i-1,j-1}^{(1)}(z).
\end{align*}
So \eqref{7-2-22-2} is proved.
\par
We can prove \eqref{7-2-22-3} in a similar fashion. Let $i\geq 1$ be fixed. If $n\geq 0$ satisfies $pn \leq i-1$, then 
$\mathcal{D}_{[n,i,0]}=\emptyset$ and $A_{[n,i,0]}=0$ because a path starting at height $i$ cannot reach height $0$ in $n$ steps. So assume $pn\geq i$ and let $\gamma$ be a path in $\mathcal{D}_{[n,i,0]}$. We can find in $\gamma$ a unique downstep with the following property: it is the first downstep of the form $(\kappa,j)\rightarrow (\kappa+1,0),$  $1\leq j \leq p$, as we cross the path from left to right. In other words, it is the first downstep that ends at height $0$. This step certainly exists since the path starts at height $i\geq 1$ and ends at height $0$. We denote the abscissa of the initial point of this unique downstep with the symbol $\kappa(\gamma)$.

The path  $\gamma\in\mathcal{D}_{[n,i,0]}$ can be subdivided into three parts $\gamma_{1}$, $\gamma_{2}$, $\gamma_{3}$. The first piece $\gamma_{1}$ is the portion of $\gamma$ on the interval $[0,\kappa(\gamma)]$, the piece $\gamma_{2}$ is the unique downstep mentioned above $(\kappa(\gamma),j)\rightarrow(\kappa(\gamma)+1,0)$, and $\gamma_{3}$ is the portion of $\gamma$ on the interval 
$[\kappa(\gamma)+1, n]$.
\par
The first piece $\gamma_{1}$ begins at $(0,i)$ and ends at $(\kappa(\gamma),j)$, so it  is clearly a path in $\mathcal{D}^{(1)}_{[\kappa(\gamma),i-1,j-1]}$ since  $\gamma_{1}$ has no point below the line $y=1$. The downstep $\gamma_{2}$ has weight $a_{0}^{(-j)}$, cf. \eqref{6-29-22-2}. The third piece $\gamma_{3}$ begins at $(\kappa(\gamma)+1,0)$ and ends at $(n,0)$. Therefore $\gamma_{3}$ can be identified with a horizontal translation of a  path in $\mathcal{D}_{[n-\kappa(\gamma)-1,0,0]}$ since it has length $n-\kappa(\gamma)-1$ and begins and ends at height $0$. 
 
Let us write the formula for the weight polynomial $A_{[n,i,0]}$ associated to the collection $\mathcal{D}_{[n,i,0]}.$
Taking into account the relation $w(\gamma) = w(\gamma_{1}) \cdot w(\gamma_{2}) \cdot w(\gamma_{3})$, the range $1\leq j\leq p$, and the possible values for $\kappa(\gamma)$ between $0$ and $n-1$, we conclude that 
\begin{equation}\label{newrel}
A_{[n,i,0]}=\sum_{\gamma\in\mathcal{D}_{[n,i,0]}}w(\gamma)=
\sum_{\kappa=0}^{n-1}\sum_{j=1}^{p}A_{[n-\kappa-1,0,0]}\,a_{0}^{(-j)}\,A_{[\kappa,i-1,j-1]}^{(1)},\qquad n\geq 1.
\end{equation}
Observe that $A_{[0,i,0]}=0$. We can rewrite the relation \eqref{newrel} in terms of the series defined in \eqref{7-1-22-6} and \eqref{7-1-22-7} and obtain \eqref{7-2-22-3}.
 
Now we prove \eqref{6-22-22}. Let $i, j\geq 1$. If $n\geq 0$ satisfies $qn \leq j-i-1$ or $pn \leq i-j-1$, then $\mathcal{D}_{[n,i,j]}=\emptyset$ and $A_{[n,i,j]}=0$ because a path with initial height $i$ cannot reach height $j$ in $n$ steps. Assume then that $pn \geq i-j$ and  $qn \geq j-i$, and let $\gamma$ be a path in $\mathcal{D}_{[n,i,j]}$.

We consider two cases for $\gamma$. In the first case $\gamma$ does not go below the line $y=1$ at any point. Then $\gamma$ is clearly a path in $\mathcal{D}^{(1)}_{[n,i-1,j-1]}$. If $\gamma$ goes below the line $y=1$ at some point, then there is a unique downstep in $\gamma$ with the following property: it is the first downstep of the form $(\kappa,t)\rightarrow (\kappa+1,0),$  $1\leq t \leq p,$ as we cross the path from left to right. We denote the abscissa of the initial point of this unique downstep with the symbol $\kappa(\gamma)$.

We subdivide $\gamma$ into three pieces $\gamma_{1}$, $\gamma_{2}$, $\gamma_{3}$. The first piece $\gamma_{1}$ starts at $(0,i)$ and ends at $(\kappa(\gamma),t)$.  The path $\gamma_{1}$ has no point below the line $y=1$, so $\gamma_{1}$ belongs to the collection $\mathcal{D}^{(1)}_{[\kappa(\gamma),i-1,t-1]}$. The second piece $\gamma_{2}$ is the downstep $(\kappa(\gamma),t)\rightarrow(\kappa(\gamma)+1,0)$, with weight $a_{0}^{(-t)}$. The third piece $\gamma_{3}$ starts at $(\kappa(\gamma)+1,0)$ and ends at $(n,j)$, so we can identify it with a horizontal translation of a path in $\mathcal{D}_{[n-\kappa(\gamma)-1,0,j]}$. Taking into account $w(\gamma) = w(\gamma_{1}) \cdot w(\gamma_{2}) \cdot w(\gamma_{3})$, the fact that $t$ varies from $1$ to $p$, and $\kappa(\gamma)$ varies between $0$ and $n-1$, we conclude that 
\[
A_{[n,i,j]}=\sum_{\gamma\in\mathcal{D}_{[n,i,j]}}w(\gamma)=
\sum_{\kappa=0}^{n-1}\sum_{t=1}^{p}A_{[n-\kappa-1,0,j]}\,a_{0}^{(-t)}\,A_{[\kappa,i-1,t-1]}^{(1)}+ A_{[n,i-1,j-1]}^{(1)}.
\]
In terms of the Laurent series defined in \eqref{7-1-22-6} and \eqref{7-1-22-7}, the previous relation implies
\[
A_{i,j}(z) =A_{0,j}(z)  \sum_{t=1}^{p} a_{0}^{(-t)}A_{i-1,t-1}^{(1)}(z)+ A_{i-1,j-1}^{(1)}(z).
\]
From this relation and \eqref{7-2-22-3} we obtain \eqref{6-22-22}.

Note that \eqref{7-2-22-1} is equivalent to
\begin{equation}\label{7-4-22-1}
zA_{0,0}(z)-1  = A_{0,0}(z)(a_{0}^{(0)}+\sum_{i=1}^{q} \sum_{j=1}^{p} a_{0}^{(i)} a_{0}^{(-j)} A_{i-1,j-1}^{(1)}(z)),
\end{equation}
se we prove \eqref{7-4-22-1}. Let $n\geq 1$. If $\gamma\in\mathcal{D}_{[n,0,0]}$, then  the first step of $\gamma$ is either the level step $(0,0) \rightarrow (1,0)$ or an upstep $(0,0) \rightarrow (1, i)$ for some  $1\leq i\leq q$. Partitioning the collection of paths $\mathcal{D}_{[n,0,0]}$ according to the first step of a path $\gamma\in\mathcal{D}_{[n,0,0]}$, we can write  the weight polynomial $A_{[n,0,0]}$ as
\begin{equation}\label{7-4-22-5}
A_{[n,0,0]}=\sum_{\gamma\in\mathcal{D}_{[n,0,0]}} w(\gamma)= a_{0}^{(0)}A_{[n-1,0,0]}+
\sum_{i=1}^{q}  a_{0}^{(i)}A_{[n-1,i,0]}.
\end{equation}
Then by \eqref{7-4-22-5} and \eqref{7-2-22-3}, we obtain
\begin{align*}
z A_{0,0}(z)-1 & = a_{0}^{(0)} A_{0,0}(z)+\sum_{i=1}^{q} a_{0}^{(i)}  A_{i,0}(z)=a_{0}^{(0)} A_{0,0}(z)+\sum_{i=1}^{q} \sum_{j=1}^{p} a_{0}^{(i)} a_{0}^{(-j)} A_{0,0}(z) A_{i-1,j-1}^{(1)}(z)\notag\\
 & =A_{0,0}(z) (a_{0}^{(0)}+\sum_{i=1}^{q} \sum_{j=1}^{p} a_{0}^{(i)} a_{0}^{(-j)} A_{i-1,j-1}^{(1)}(z))
\end{align*}
and \eqref{7-4-22-1} follows.\end{proof}

In the following result we gather some linear relations between the series \eqref{7-1-22-6}. 

\begin{proposition}\label{pr5-18-23}
The following relations hold for any $i, j\geq 0$:
\begin{align}
z A_{i,j}(z)-\delta_{i,j} & =\sum_{r=1}^{p} a_{j}^{(-r)}A_{i,j+r}(z)+\sum_{r=0}^{\min\{j,q\}}a_{j-r}^{(r)}A_{i,j-r}(z),\label{newrel2}\\
z A_{i,j}(z)-\delta_{i,j} & =\sum_{r=1}^{q} a_{i}^{(r)} A_{i+r,j}(z)+\sum_{r=0}^{\min\{i,p\}} a_{i-r}^{(-r)} A_{i-r,j}(z),\label{newrel3}
\end{align}
where $\delta_{i,j}$ is the Kronecker delta.
\end{proposition}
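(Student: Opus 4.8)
The plan is to prove both relations \eqref{newrel2} and \eqref{newrel3} by a direct combinatorial argument that partitions each collection of lattice paths according to the nature of its \emph{last} step. The key observation is that a path $\gamma \in \mathcal{D}_{[n,i,j]}$ of length $n \geq 1$ ends at height $j$, and its final step $e_n$ must arrive at $(n,j)$ from one of the allowed predecessor heights. I would read off the multiplication by $z$ on the left-hand side as the shift $A_{[n,i,j]} \mapsto A_{[n-1,i,j]}$ in the generating series, so that proving \eqref{newrel2} amounts to establishing the weight-polynomial recursion
\[
A_{[n,i,j]} = \sum_{r=1}^{p} a_{j}^{(-r)} A_{[n-1,i,j+r]} + \sum_{r=0}^{\min\{j,q\}} a_{j-r}^{(r)} A_{[n-1,i,j-r]}, \qquad n \geq 1,
\]
with the Kronecker delta $\delta_{i,j}$ accounting for the initial term $A_{[0,i,j]}$ in the series $zA_{i,j}(z)$.

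First I would fix $n \geq 1$ and $\gamma \in \mathcal{D}_{[n,i,j]}$, and classify the final step $e_n$ by its type. If $e_n$ is a downstep arriving at $(n,j)$, it originates at some height $j+r$ with $1 \leq r \leq p$; by the weight rule \eqref{6-29-22-2} (where the weight of a downstep is indexed by the \emph{lowest} point, namely $j$), this step contributes the factor $a_{j}^{(-r)}$, and deleting $e_n$ leaves a path in $\mathcal{D}_{[n-1,i,j+r]}$. If $e_n$ is a level step ($r=0$) or an upstep arriving at $(n,j)$ from height $j-r$ with $1 \leq r \leq q$, the weight contributed is $a_{j-r}^{(r)}$ and the truncated path lies in $\mathcal{D}_{[n-1,i,j-r]}$; here the constraint $j-r \geq 0$ forces the upper summation limit $\min\{j,q\}$, since paths cannot dip below the $x$-axis. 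Summing the weight factorization $w(\gamma) = w(e_n)\,w(\gamma')$ over all paths and all cases yields the displayed recursion. Passing to the generating series and using $[z^{-n}]\,(zA_{i,j}(z)) = A_{[n-1,i,j]}$ together with $A_{[0,i,j]}=\delta_{i,j}$ gives \eqref{newrel2}. Relation \eqref{newrel3} is proved symmetrically by partitioning according to the \emph{first} step of $\gamma$, which leaves from $(0,i)$: a first upstep to height $i+r$ ($1 \leq r \leq q$) contributes $a_i^{(r)}$ and leaves a translated path in $\mathcal{D}_{[n-1,i+r,j]}$, while a level or downstep to height $i-r$ ($0 \leq r \leq \min\{i,p\}$) contributes $a_{i-r}^{(-r)}$.

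The main obstacle, and the step requiring the most care, is the bookkeeping of the weight indices and the summation limits, since the weight of a step depends on its lowest endpoint rather than its starting or ending height. I would verify scrupulously that a downstep ending at $j$ from $j+r$ carries weight $a_j^{(-r)}$ (lowest point $j$), whereas an upstep ending at $j$ from $j-r$ carries weight $a_{j-r}^{(r)}$ (lowest point $j-r$), matching exactly the subscripts in \eqref{newrel2}; the analogous check for \eqref{newrel3} uses that a downstep leaving from $i$ to $i-r$ has lowest point $i-r$ and weight $a_{i-r}^{(-r)}$. The second delicate point is the truncation of the summation range by the non-negativity constraint defining $\mathcal{D}$: when $j < q$, an upstep ending at $j$ cannot originate below the $x$-axis, so $r$ ranges only up to $j$, producing $\min\{j,q\}$; likewise $\min\{i,p\}$ in \eqref{newrel3}. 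Once these index conventions are pinned down, the remainder is the routine translation of a weight-polynomial recursion into an identity in $\mathbb{C}((z^{-1}))$.
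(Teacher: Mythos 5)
Your proposal is correct and follows essentially the same argument as the paper: you partition $\mathcal{D}_{[n,i,j]}$ by the last step to obtain the weight-polynomial recursion for \eqref{newrel2} and by the first step for \eqref{newrel3}, then translate into identities in $\mathbb{C}((z^{-1}))$, exactly as the paper does. Your index bookkeeping (the weight $a_j^{(-r)}$ for a downstep ending at height $j$, the weight $a_{j-r}^{(r)}$ for an upstep from height $j-r$, and the truncation $\min\{j,q\}$, $\min\{i,p\}$ from the non-negativity constraint) matches the paper's conventions precisely.
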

\begin{proof}
Fix $i,j\geq 0$. It is evident that for $n=0$ we have $A_{[n,i,j]}=A_{[0,i,j]}=\delta_{i,j}$ (recall that by definition, paths of length zero have weight $1$, and $A_{[n,i,j]}=0$ if $\mathcal{D}_{[n,i,j]}=\emptyset$). Assume now that $n\geq 1$ and consider an arbitrary path $\gamma$ in the collection $\mathcal{D}_{[n,i,j]}$. There are two possible cases for the last step of $\gamma$. 
In the first case, the last step of $\gamma$ is a downstep  of the form $(n-1,j+r)\rightarrow(n,j)$ with weight $a_{j}^{(-r)}$, where $1\leq r\leq p$.
In the second case, the last step of $\gamma$ is an upstep or level step of the form $(n-1,j-r)\rightarrow(n,j)$
 with weight $a_{j-r}^{(r)}$,   where $0\leq r\leq \min\{j,q\}$.
The part of $\gamma$ on the interval $[0,n-1]$  is a path in the collection $\mathcal{D}_{[n-1,i,j+r]}$ or in the collection 
$\mathcal{D}_{[n-1,i,j-r]}$, respectively.

From this decomposition of $\gamma\in\mathcal{D}_{[n,i,j]}$ we deduce that
\[
A_{[n,i,j]}=\sum_{\gamma\in\mathcal{D}_{[n,i,j]}}w(\gamma)=\sum_{r=1}^{p}a_{j}^{(-r)}A_{[n-1,i,j+r]}+
\sum_{r=0}^{\min\{j,q\}}a_{j-r}^{(r)} A_{[n-1,i,j-r]}.
\]
This implies
\begin{align*}
zA_{i,j}(z)-\delta_{i,j} & =\sum_{n=1}^{\infty}\frac{A_{[n,i,j]}}{z^{n}}=\sum_{n=1}^{\infty}\frac{1}{z^{n}}
\Big(\sum_{r=1}^{p}a_{j}^{(-r)}A_{[n-1,i,j+r]}+
\sum_{r=0}^{\min\{j,q\}}a_{j-r}^{(r)} A_{[n-1,i,j-r]}\Big)\\
& = \sum_{r=1}^{p} a_{j}^{(-r)}A_{i,j+r}(z)+\sum_{r=0}^{\min\{j,q\}}a_{j-r}^{(r)}A_{i,j-r}(z),
\end{align*}
so \eqref{newrel2} is justified.

Now we prove \eqref{newrel3}. Again suppose that $n\geq 1$ and consider an arbitrary path $\gamma\in\mathcal{D}_{[n,i,j]}$. The idea is to look now at the first step of $\gamma$ instead of the last one. There are two possible cases for the first step of $\gamma$. 
In the first case, the first step of $\gamma$ is an upstep  of the form $(0,i)\rightarrow(1,i+r)$ with weight $a_{i}^{(r)}$, 
where $1\leq r\leq q$.
In the second case, the first step of $\gamma$ is a downstep or level step of the form $(0,i)\rightarrow(1,i-r)$
 with weight $a_{i-r}^{(-r)}$,   where $0\leq r\leq \min\{i,p\}$.
The part of $\gamma$ on the interval $[1,n]$ is 
a horizontal translation of a path in the collection $\mathcal{D}_{[n-1,i+r,j]}$ or in the collection $\mathcal{D}_{[n-1,i-r,j]}$, 
respectively.

From this analysis we deduce that
\[
A_{[n,i,j]} =\sum_{r=1}^{q}a_{i}^{(r)}A_{[n-1,i+r,j]}+\sum_{r=0}^{\min\{i,p\}}a_{i-r}^{(-r)}A_{[n-1,i-r,j]}.
\]
and as before this easily implies \eqref{newrel3}.
\end{proof}

\section{Lattice paths and resolvents of difference operators}\label{sec:lattpathoper}

In this section we show that the formal Laurent series defined in \eqref{7-1-22-6}, \eqref{7-1-22-7}, and \eqref{7-1-22-8} can be identified as resolvent functions of certain difference operators. Again  for each $-p\leq k\leq q$, let $(a^{(k)}_{n})_{n\in\mathbb{Z}}$ be a  bi-infinite sequence of complex numbers.
Let $\{e_{n}\}_{n \in \mathbb{Z}}$ denote the standard basis vectors in the space $\ell^{2}(\mathbb{Z})$ with the inner product 
$\langle \cdot , \cdot\rangle$. Let $\mathcal{E}$ denote the subspace consisting of all finite linear combinations of the basis vectors
$e_{n}$.

Let $\mathcal{W}$ be the linear operator (possibly unbounded) on $\mathcal{E}$ that acts on the standard basis vectors as follows
\begin{equation}\label{8-26-22-2}
\mathcal{W} e_{n}=\sum_{m=1}^{q} a_{n-m}^{(m)}\,e_{n-m}+\sum_{m=0}^{p} a_{n}^{(-m)}\,e_{n+m},\quad n\in\mathbb{Z},
\end{equation}
and is extended by linearity to $\mathcal{E}$.
In the basis $\{e_{n}\}_{n \in \mathbb{Z}}$, the matrix representation of this two-sided difference operator is the bi-infinite banded matrix $W$ in \eqref{matrixW}. We introduce a system of resolvent functions $\psi_{i,j}(z)$ associated with the operator $\mathcal{W}$ by the formulas
\begin{equation}\label{8-26-22-3}
\psi_{i,j}(z):=\sum_{n=0}^{\infty}\frac{\langle\mathcal{W}^{n} e_{j},
e_{i}\rangle}{z^{n+1}}, \qquad i,j  \in \mathbb{Z},
\end{equation}
where the series are understood as formal Laurent series in the variable $z$ with coefficients $\langle \mathcal{W}^{n} e_{j},e_{i}\rangle$. 

Let $\mathcal{E}_{0}$ denote the subspace of $\mathcal{E}$ consisting of all finite linear combinations of the basis vectors  $\{e_{n}\}_{n=0}^{\infty}$. Let $\mathcal{H}:\mathcal{E}_{0}\rightarrow\mathcal{E}_{0}$ be the linear operator (possibly unbounded) defined by 
\[
\begin{cases}
\mathcal{H} e_{0}=\sum_{m=0}^{p} a_{0}^{(-m)} e_{m},\\[0.3em]
\mathcal{H} e_{n}=\sum_{m=1}^{n} a_{n-m}^{(m)}\,e_{n-m}+\sum_{m=0}^{p} a_{n}^{(-m)}\,e_{n+m},\quad 0< n< q,\\[0.3em]
\mathcal{H} e_{n}=\sum_{m=1}^{q} a_{n-m}^{(m)}\,e_{n-m}+\sum_{m=0}^{p} a_{n}^{(-m)}\,e_{n+m},\quad n\geq q.
\end{cases}
\]
In the basis $\{e_{n}\}_{n=0}^{\infty}$, the matrix representation of this one-sided difference operator $\mathcal{H}$ is the infinite banded matrix $H$ in \eqref{matrixH}. We introduce the resolvent functions
\begin{equation}\label{7-9-22-2}
\phi_{i,j}(z):=\sum_{n=0}^{\infty}\frac{\langle \mathcal{H}^{n} e_{j},e_{i}\rangle}{z^{n+1}},\qquad
 i,j \ge 0,
\end{equation}
where the series is again understood as a formal Laurent series.

Finally, let $\mathcal{H}_{1}:\mathcal{E}_{0}\rightarrow\mathcal{E}_{0}$ be the linear operator defined by 
\[
\begin{cases}
\mathcal{H}_{1} e_{0}=\sum_{m=0}^{p} a_{1}^{(-m)} e_{m},\\[0.3em]
\mathcal{H}_{1} e_{n}=\sum_{m=1}^{n} a_{n-m+1}^{(m)}\,e_{n-m}+\sum_{m=0}^{p} a_{n+1}^{(-m)}\,e_{n+m},\quad 0< n< q,\\[0.3em]
\mathcal{H}_{1} e_{n}=\sum_{m=1}^{q} a_{n-m+1}^{(m)}\,e_{n-m}+\sum_{m=0}^{p} a_{n+1}^{(-m)}\,e_{n+m},\quad n\geq q.
\end{cases}
\]
In the basis $\{e_{n}\}_{n=0}^{\infty}$, the matrix representation of the operator $\mathcal{H}_{1}$  is the banded matrix
\[
H^{[1]}=\begin{pmatrix}
a_{1}^{(0)} & \ldots & a_{1}^{(q)} & & & \\
\vdots & a_{2}^{(0)} & \ldots & a_{2}^{(q)} & & \\
a_{1}^{(-p)} & \vdots & a_{3}^{(0)} & \ldots & a_{3}^{(q)} & \\
 & a_{2}^{(-p)} & \vdots & \ddots & & \ddots \\
 &  & a_{3}^{(-p)} & &  \ddots \\
 & & & \ddots &    
\end{pmatrix}.
\]
Note that $H^{[1]}$ is the infinite matrix obtained by removing the first row and the first column of the matrix $H$. The resolvent  functions $\phi_{i,j}^{(1)}(z)$ associated with the operator  $\mathcal{H}_{1}$ are given by
\begin{equation}
\phi_{i,j}^{(1)}(z):=\sum_{n=0}^{\infty}\frac{\langle\mathcal{H}_{1}^{n}\, e_{j},e_{i}\rangle}{z^{n+1}},\qquad i,j \geq 0.\label{8-26-22-4}\\
\end{equation}

Now we prove that the resolvent functions $\phi_{i,j}(z)$, $\phi_{i,j}^{(1)}(z)$, $\psi_{i,j}(z)$ coincide with the Laurent series $A_{i,j}(z)$, $A_{i,j}^{(1)}(z)$, $W_{i,j}(z)$ constructed in terms of the weight polynomials associated with the lattice paths defined in Section \ref{Lattice paths}.

\begin{theorem}\label{theorem2}
We have the following identities:
\begin{align}
\phi_{i,j}(z) & =A_{i,j}(z), \qquad i,j \ge 0,  \label{6-26-22-1}\\
\phi_{i,j}^{(1)}(z) & =A_{i,j}^{(1)}(z), \qquad i,j \ge 0, \label{6-26-22-2}\\
\psi_{i,j}(z) & = W_{i,j}(z) \qquad i,j \in \mathbb{Z}. \label{6-26-22-3}  
\end{align} 
\end{theorem}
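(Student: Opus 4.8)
The plan is to reduce all three identities to the single combinatorial fact that the $(i,j)$-entry of the $n$-th power of a banded matrix equals the weight polynomial of the paths of length $n$ running between heights $i$ and $j$. Since the series in \eqref{8-26-22-3}, \eqref{7-9-22-2}, \eqref{8-26-22-4} and in \eqref{7-1-22-6}--\eqref{7-1-22-8} are all formal Laurent series, it suffices to match coefficients termwise, so the whole theorem amounts to proving, for every $n\ge 0$,
\[
\langle \mathcal{H}^n e_j, e_i\rangle = A_{[n,i,j]}, \qquad \langle \mathcal{H}_1^n e_j, e_i\rangle = A^{(1)}_{[n,i,j]}, \qquad \langle \mathcal{W}^n e_j, e_i\rangle = W_{[n,i,j]}.
\]

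First I would record that, in the standard basis, $\langle \mathcal{H} e_j, e_i\rangle = h_{i,j}$, the $(i,j)$-entry of $H$, and likewise $\langle \mathcal{W} e_j, e_i\rangle = w_{i,j}$; this is a direct reading of \eqref{8-26-22-2} and the definition of $\mathcal{H}$ against \eqref{def:entriesH}, \eqref{def:entriesW}. Consequently $\langle \mathcal{H}^n e_j, e_i\rangle = (H^n)_{i,j}$ and similarly for the others, and I would expand
\[
(H^n)_{i,j} = \sum_{m_1,\ldots,m_{n-1}} h_{i,m_1}\, h_{m_1,m_2}\cdots h_{m_{n-1},j},
\]
writing $m_0 := i$ and $m_n := j$. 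For $n=0$ both sides reduce to $\delta_{i,j} = A_{[0,i,j]}$.

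The key step is to interpret each nonzero summand as the weight of a lattice path. I would show that the weight of a single step whose endpoints have heights $a$ and $b$ equals exactly $h_{a,b}$, \emph{independently} of whether the step rises, is level, or falls: using the weight convention \eqref{6-29-22-2} (weight indexed by the lowest point of the step) one checks the up/level case $b\ge a$ against $h_{a,b}=a_a^{(b-a)}$ and the down case $b<a$ against $h_{a,b}=a_b^{(b-a)}$, the two cases corresponding precisely to the two cases in \eqref{def:entriesH}. Hence a sequence of heights $i=m_0,m_1,\ldots,m_n=j$ contributes $\prod_{l=1}^n h_{m_{l-1},m_l}$, which is exactly the weight of the associated path. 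Because the matrix $H$ is indexed by $\mathbb{Z}_{\ge 0}$, every factor $h_{m_{l-1},m_l}$ carrying a negative index vanishes, so the surviving summands are in bijection with the paths that never drop below the $x$-axis, that is, with $\mathcal{D}_{[n,i,j]}$; summing gives $(H^n)_{i,j}=A_{[n,i,j]}$ and hence \eqref{6-26-22-1}. The boundary cases $0\le n<q$ in the definition of $\mathcal{H}$ are automatically consistent, since they merely encode the disappearance of steps that would leave the admissible range.

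The remaining two identities follow by the same argument with two bookkeeping changes. For $\mathcal{W}$ the indices $m_l$ range over all of $\mathbb{Z}$, so there is no positivity constraint and the summands are in bijection with all of $\mathcal{P}_{[n,i,j]}$, yielding $(W^n)_{i,j}=W_{[n,i,j]}$ and \eqref{6-26-22-3}. For $\mathcal{H}_1$, whose matrix $H^{[1]}$ is obtained from $H$ by the shift $a_m^{(k)}\mapsto a_{m+1}^{(k)}$, the identical computation produces the weight polynomial in which every $a_m^{(k)}$ is replaced by $a_{m+1}^{(k)}$, which is exactly $A^{(1)}_{[n,i,j]}$ by its definition, giving \eqref{6-26-22-2}. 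I expect the only genuinely delicate point to be the uniform identification of the step weight with $h_{a,b}$ across the up/level/down trichotomy; once that is in place, the rest is the classical ``matrix powers count weighted walks'' bookkeeping, together with the observation that the index range of each matrix dictates the admissible vertical range of the paths.
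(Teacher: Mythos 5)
Your proof is correct and follows essentially the same route as the paper's: both expand $(H^n)_{i,j}$ (resp.\ $(W^n)_{i,j}$) as a sum over intermediate height sequences, identify each factor $h_{a,b}=a_{\min(a,b)}^{(b-a)}$ with the weight of the corresponding step by checking the up/level and down cases separately, and deduce \eqref{6-26-22-2} from \eqref{6-26-22-1} via the index shift $a_m^{(k)}\mapsto a_{m+1}^{(k)}$ that turns $H$ into $H^{[1]}$. The only cosmetic difference is that you enforce the $y\ge 0$ constraint by extending $H$ by zero entries so that offending walks vanish, whereas the paper restricts the summation indices to be nonnegative from the outset.
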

\begin{proof}
First we prove \eqref{6-26-22-1}. In view of \eqref{7-9-22-2} and \eqref{7-1-22-6}, we need to show that for every $n\geq 0$ we have 
\begin{equation}\label{7-9-22-1}
\langle{\mathcal{H}}^{n}e_{j},e_{i}\rangle=A_{[n,i,j]},
\end{equation}
where $A_{[n,i,j]}$ is the weight polynomial associated with the collection 
$\mathcal{D}_{[n, i, j]}$ of all lattice paths of length $n$, with initial
point $(0,i)$ and final point $(n,j)$ with no point below the $x$-axis. 

Recall that $H$ is the matrix representation of the operator $\mathcal{H}$. We can express the entries \eqref{def:entriesH} of the matrix $H$ as follows
\[
\begin{cases}
h_{i,j}=a_{\min (i,j)}^{(j-i)}, & -p \leq j-i\leq q, \ \ i, j \geq 0,\\
h_{i,j}=0, & \mbox{otherwise}.
\end{cases}
\]
The relations $A_{[0,i,j]}=\delta_{i,j}=\langle e_{j},e_{i}\rangle$ show that \eqref{7-9-22-1} 
is valid for $n=0$. Fix integers $n\geq 1$ and $i, j\geq 0$. Set  $i_0=i$ and $i_n= j$. Writing out the matrix multiplication explicitly, we have
\begin{equation} \nonumber
\langle{\mathcal{H}}^{n}\, e_{j},e_{i}\rangle=(H^n)_{i,j}=\sum_{i_{1}, \ldots, i_{n-1}}h_{i_{0},i_{1}} h_{i_{1},i_{2}}\cdots 
h_{i_{n-2},i_{n-1}} h_{i_{n-1},i_{n}}.
\end{equation}
So, we can write
\begin{equation} \label{7-9-22-10}
\langle{\mathcal{H}}^{n} e_{j},e_{i}\rangle=\sum_{i_{1}, \ldots, i_{n-1}}a_{\min (i_{0},i_{1})}^{(i_{1}-i_{0})}a_{\min (i_{1},i_{2})}^{(i_{2}-i_{1})} \cdots
 a_{\min (i_{n-2},i_{n-1})}^{(i_{n-1}-i_{n-2})}a_{\min (i_{n-1},i_{n})}^{(i_{n}-i_{n-1})},
\end{equation}
where  
\begin{equation}\label{condlattice}
-p\leq i_{k+1}-i_{k}\leq q, \qquad \min(i_{k},i_{k+1})\geq 0,\qquad \mbox{for all}\,\,\,0 \leq k \leq n-1.
\end{equation}
Note that 
$$a_{\min (i_{k},i_{k+1})}^{(i_{k+1}-i_{k})}$$
is the weight of the step that starts at  the point $(k,i_k)$ and ends at the point $(k+1,i_{k+1})$, see \eqref{6-29-22-2}.
Indeed, if $i_{k+1} \geq i_{k}$, then 
$$a_{\min (i_{k},i_{k+1})}^{(i_{k+1}-i_{k})}=a_{i_{k}}^{(i_{k+1}-i_{k})}$$
is the weight of the upstep or level step with initial
point $(k,i_k)$ and final point $(k+1,i_{k+1}).$ In the case $i_{k}> i_{k+1}$, then
$$a_{\min (i_{k},i_{k+1})}^{(i_{k+1}-i_{k})}=a_{i_{k+1}}^{(i_{k+1}-i_{k})}$$
is the weight of the downstep with initial
point $(k,i_k)$ and final point $(k+1,i_{k+1}).$
Thus, the product 
$$a_{\min (i_{0},i_{1})}^{(i_{1}-i_{0})}a_{\min (i_{1},i_{2})}^{(i_{2}-i_{1})} \cdots
 a_{\min (i_{n-2},i_{n-1})}^{(i_{n-1}-i_{n-2})}a_{\min (i_{n-1},i_{n})}^{(i_{n}-i_{n-1})},$$
where \eqref{condlattice} holds, is the weight of a lattice path of length $n$ with initial point $(0,i)$, final point $(n,j)$, with no point below the $x$-axis, that is, a path in $\mathcal{D}_{[n,i,j]}$. Now, considering that by \eqref{7-9-22-10} the expression $\langle{\mathcal{H}}^{n}e_{j},e_{i}\rangle$
equals the sum of such products, and there is a one-to-one correspondence between paths in $\mathcal{D}_{[n,i,j]}$ and choices of $i_{1},\ldots,i_{n-1}$ satisfying \eqref{condlattice}, we get \eqref{7-9-22-1}
followed by \eqref{6-26-22-1}.

We can prove \eqref{6-26-22-3} in a similar fashion. The function $\psi_{i,j}(z)$ is given by the formal Laurent series \eqref{8-26-22-3}. The entries \eqref{def:entriesW} of the matrix $W=(w_{i,j})_{i,j \in \mathbb{Z}}$ that represents the operator $\mathcal{W}$ can be expressed as
\[
\begin{cases}
w_{i,j}=a_{\min (i,j)}^{(j-i)}, & -p \leq j-i\leq q,\\
w_{i,j}=0, & \mbox{otherwise}.
\end{cases}
\]
Using the argument above we obtain that
for any nonnegative integer $n$ and integers $i$ and $j$ we have
\begin{equation}\label{7-9-22-4}
\langle{\mathcal{W}}^{n}e_{j},e_{i}\rangle=(W^n)_{i,j}=W_{[n,i,j]},
\end{equation}
where $W_{[n,i,j]}$ is the weight polynomial associated with
the collection 
$\mathcal{P}_{[n, i, j]}$ of all lattice paths of length $n$, with initial
point $(0,i)$ and final point $(n,j)$. 
Then formula \eqref{6-26-22-3} follows directly from  the definition
\eqref{7-1-22-8} of the Laurent series $W_{i,j}(z)$, \eqref{8-26-22-3}, and \eqref{7-9-22-4}.

To prove \eqref{6-26-22-2} we note the following. If in the formula \eqref{7-1-22-6} for the Laurent series $A_{i,j}(z)$ we replace the matrix $H$ by the matrix $H^{[1]}$, we obtain the Laurent series $A_{i,j}^{(1)}(z)$. Furthermore, if
in the formulas for the resolvent functions $\phi_{i,j}(z)$ we use the operator $\mathcal{H}_{1}$ instead of the operator $\mathcal{H}$ and we replace the matrix $H$ by $H^{[1]}$, we get the resolvent functions $\phi_{i,j}^{(1)}(z)$. So \eqref{6-26-22-2} follows directly from \eqref{6-26-22-1}.
\end{proof}

\section{Resolvents of the operators $\mathcal{H}$ and $\mathcal{H}_{1}$}\label{sec:rfH}

Recall that $\mathcal{E}_{0}$ denotes the vector space of all finite linear combinations of the standard basis vectors $\{e_{n}\}_{n\geq 0}\subset\ell^{2}(\mathbb{Z})$. If $L:\mathcal{E}_{0}\rightarrow\mathcal{E}_{0}$ is a linear operator, the matrix representation of $L$ in the basis $\{e_{n}\}_{n\geq 0}$ is the matrix $(a_{i,j})_{i,j\geq 0}=(\langle L e_{j},e_{i}\rangle)_{i,j\geq 0}$. It is clear that a matrix $(a_{i,j})_{i,j\geq 0}$ is the matrix representation of an operator on $\mathcal{E}_{0}$ if and only if every column of the matrix has finitely many non-zero entries. 

Let $\mathcal{L}_{0}((z^{-1}))$ denote the set of all formal Laurent series
\[
A(z)=\sum_{n\in\mathbb{Z}}A_{n} z^{n}
\]
whose coefficients $A_{n}$ are linear operators on $\mathcal{E}_{0}$ and have only finitely many non-zero coefficients with index $n>0$. This set is a non-commutative ring with the usual addition and multiplication
\begin{align*}
(A+B)(z) & =\sum_{n\in\mathbb{Z}}(A_{n}+B_{n})\,z^{n},\\
(A\cdot B)(z) & =\sum_{n\in\mathbb{Z}}\Big(\sum_{l\in\mathbb{Z}} A_{l}\,B_{n-l}\Big) z^{n}.
\end{align*}
The reader can easily check that if $L$ is any operator on $\mathcal{E}_{0}$, then $zI-L$ is an invertible element in the ring and 
\begin{equation}\label{seriesinvform}
(zI-L)^{-1}=\sum_{n=0}^{\infty}\frac{L^{n}}{z^{n+1}}.
\end{equation}
More generally we have the following.

\begin{lemma}\label{lemmainversion}
A series of the form
\[
B(z)=zI+\sum_{n=0}^{\infty}\frac{B_{n}}{z^{n}}\in\mathcal{L}_{0}((z^{-1}))
\]
is invertible.
\end{lemma}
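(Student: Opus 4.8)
The plan is to invert $B(z)$ by factoring out its leading term $zI$ and then inverting the remaining factor with a Neumann (geometric) series. First I would write
\[
B(z)=zI\,\bigl(I+R(z)\bigr),\qquad R(z):=\sum_{n=0}^{\infty}\frac{B_{n}}{z^{n+1}}.
\]
Multiplication by $zI$ is simply a degree shift, so $zI$ is a central element of the ring $\mathcal{L}_{0}((z^{-1}))$ and is a unit with inverse $z^{-1}I$. Hence it suffices to invert the factor $I+R(z)$, where now $R(z)$ has only terms of degree $\le -1$.

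The key observation is that $R(z)^{k}$ is a series all of whose nonzero coefficients sit at powers $z^{-m}$ with $m\ge k$. I would therefore form the formal Neumann series
\[
S(z):=\sum_{k=0}^{\infty}(-1)^{k}R(z)^{k},
\]
and argue that it is a well-defined element of $\mathcal{L}_{0}((z^{-1}))$: for each fixed $m$, only the finitely many terms with $k\le m$ contribute to the coefficient of $z^{-m}$, so that coefficient is a finite sum of compositions of the operators $B_{n}$, and is thus again a linear operator on $\mathcal{E}_{0}$; moreover $S(z)=I+O(z^{-1})$ has no positive powers of $z$. This is the only genuinely delicate step, and I would make it precise by invoking completeness of the ring with respect to the degree filtration, so that the infinite operator-coefficient sum collapses power by power to finite sums.

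Finally I would verify the telescoping identities
\[
(I+R(z))\,S(z)=S(z)\,(I+R(z))=I,
\]
which hold formally and require no commutativity, since $S+RS=S-\sum_{k\ge1}(-1)^{k}R^{k}=I$, and symmetrically on the other side. Combining this with the centrality and invertibility of $zI$ yields the two-sided inverse
\[
B(z)^{-1}=z^{-1}\,S(z)=\frac{1}{z}\sum_{k=0}^{\infty}(-1)^{k}R(z)^{k}\in\mathcal{L}_{0}((z^{-1})),
\]
which proves that $B(z)$ is invertible. As a consistency check, specializing to $B_{0}=-L$ and $B_{n}=0$ for $n\ge1$ recovers exactly the expansion \eqref{seriesinvform}. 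The main obstacle is thus confined to the summability argument for $S(z)$; the factorization and the telescoping verification are routine.
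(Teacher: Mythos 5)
Your proof is correct, but it takes a different route from the paper's. The paper constructs the inverse coefficient-by-coefficient: it posits $A(z)=Iz^{-1}+\sum_{n\geq 2}A_{n}z^{-n}$, expands $B(z)A(z)$, and defines the $A_{n}$ by an explicit recursion so that $B(z)A(z)=I$; it then repeats the construction to get a left inverse $C(z)$ with $C(z)B(z)=I$, and concludes $C=A$ by associativity. You instead factor $B(z)=zI\,(I+R(z))$ and invert $I+R(z)$ by a Neumann series, which is the standard ``$1+(\text{topologically nilpotent})$ is a unit'' argument in a complete filtered ring. Your summability justification is the right one and is adequate as stated: since $\deg R(z)^{k}\leq -k$, each coefficient of $S(z)=\sum_{k\geq 0}(-1)^{k}R(z)^{k}$ is a finite sum of finite compositions of the $B_{n}$, and the telescoping identity $(I+R)S=S(I+R)=I$ can be checked coefficient-wise (or from the partial-sum identity $(I+R)\sum_{k=0}^{K}(-1)^{k}R^{k}=I+(-1)^{K}R^{K+1}$, whose error term has degree tending to $-\infty$). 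What each approach buys: the paper's recursion is entirely elementary, never manipulates an infinite sum of series, and produces explicit recursive formulas for the inverse coefficients, at the cost of needing the separate left-inverse construction plus the associativity step; your factorization gives a closed-form inverse $z^{-1}\sum_{k}(-1)^{k}R(z)^{k}$ and yields the two-sided inverse in one stroke (no commutativity or separate left/right argument needed), at the cost of the convergence discussion, which you correctly isolate and resolve. Your consistency check against \eqref{seriesinvform} is also accurate.
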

\begin{proof}
We need to show that there exists $A(z)=I z^{-1}+\sum_{n=2}^{\infty}A_{n}z^{-n}\in\mathcal{L}_{0}((z^{-1}))$ such that $B(z)A(z)=A(z)B(z)=I$. For a series $A(z)$ as indicated, we have
\[
B(z)A(z)=I+(B_{0}+A_{2})z^{-1}+\sum_{n=2}^{\infty}(B_{n-1}+A_{n+1}+\sum_{j=0}^{n-2}B_{j}A_{n-j})z^{-n}.
\]
So if we define recursively the coefficients $A_{n}$, $n\geq 2$, by the formulas
\begin{align*}
A_{2} & :=-B_{0}\\
A_{n+1}& :=-B_{n-1}-\sum_{j=0}^{n-2}B_{j} A_{n-j},\qquad n\geq 2,
\end{align*}
then $B(z) A(z)=I$.
Analogously, there exists $C(z)=I z^{-1}+\sum_{n=2}^{\infty}C_{n}z^{-n}\in\mathcal{L}_{0}((z^{-1}))$ such that $C(z)B(z)=I$.
As above, if we define recursively the coefficients $C_{n}$, $n\geq 2$, by the formulas
\begin{align*}
C_{2} & :=-B_{0}\\
C_{n+1}& :=-B_{n-1}-\sum_{j=0}^{n-2}C_{n-j} B_{j},\qquad n\geq 2,
\end{align*}
then $C(z)B(z)=I$.
So we now have  $B(z)A(z)=I$ and $C(z)B(z)=I$. Then 
$$C(z)=C(z)I=C(z)(B(z)A(z))=(C(z)B(z))A(z)=A(z),$$
which concludes the proof.
\end{proof}

Recall that $\mathbb{C}((z^{-1}))$ denotes the set of all scalar formal Laurent series with complex coefficients. We say that a matrix $(f_{i,j}(z))_{i,j\geq 0}$ with entries in $\mathbb{C}((z^{-1}))$ is the matrix representation of the series $A(z)=\sum_{n\in\mathbb{Z}} A_{n} z^n \in \mathcal{L}_{0}((z^{-1}))$ if $f_{i,j}(z)=\sum_{n\in\mathbb{Z}}\langle A_{n} e_{j},e_{i}\rangle z^n$ for all $i,j\geq 0$. We indicate that relation by writing
\[
\mathcal{M}_{A}(z)=(f_{i,j}(z))_{i,j\geq 0}.
\]
It is easy to check that a matrix $(f_{i,j}(z))_{i,j\geq 0}$ of scalar formal Laurent series is the matrix representation of a series in $\mathcal{L}_{0}((z^{-1}))$ if and only if the following two conditions hold: 
\begin{itemize}
\item[1)] There exists $d\in\mathbb{Z}$ such that $\deg(f_{i,j}(z))\leq d$ for all $i,j\geq 0$.
\item[2)] For every $n\leq d$ and $j\geq 0$, there exists $\ell\geq 0$ such that $[z^{n}] f_{i,j}(z)=0$ for all $i\geq \ell$.
\end{itemize} 
Observe that if $A(z),B(z)\in\mathcal{L}_{0}((z^{-1}))$ and $C(z)=A(z)B(z)$, then $\mathcal{M}_{C}(z)=\mathcal{M}_{A}(z) \mathcal{M}_{B}(z)$, so matrix representations of formal Laurent series can be multiplied in the usual manner.

The map $A(z)\mapsto \mathcal{M}_{A}(z)$ is trivially injective. As a result, if $A(z), B(z)\in\mathcal{L}_{0}((z^{-1}))$ satisfy $\mathcal{M}_{A}(z) \mathcal{M}_{B}(z)=\mathcal{M}_{B}(z) \mathcal{M}_{A}(z)=I=\mathcal{M}_{I}(z)$, then $A(z)B(z)=B(z)A(z)=I$, so $B(z)=A^{-1}(z)$.

It follows from \eqref{7-9-22-2}, \eqref{8-26-22-4}, and \eqref{seriesinvform} that the matrix representation of $(zI-\mathcal{H})^{-1}$ is the matrix $(\phi_{i,j}(z))_{i,j\geq 0}$, and the matrix representation of $(zI-\mathcal{H}_{1})^{-1}$ is $(\phi_{i,j}^{(1)}(z))_{i,j\geq 0}$. Note that the following theorem is a direct consequence of Theorems \ref{theorem1} and \ref{theorem2}. However, we want to give an alternative proof based on the preceding ideas. We also mention that an alternative proof of Proposition \ref{pr5-18-23} follows directly from Theorem \ref{theorem2} by applying \eqref{6-26-22-1}, \eqref{7-9-22-2}, and the fact that $(zI-\mathcal{H})(zI-\mathcal{H})^{-1}=I$.

\begin{theorem}\label{theorem3}
The following relations hold between the resolvent functions defined  in \eqref{7-9-22-2} and \eqref{8-26-22-4}: 
\begin{align}
\phi_{0,0}(z) & =\frac{1}{z-a_{0}^{(0)}-\sum_{i=1}^{q}\sum_{j=1}^{p}a_{0}^{(i)}a_{0}^{(-j)}\,\phi_{i-1,j-1}^{(1)}(z)},\label{6-4-1}\\
\phi_{0,j}(z) & =\phi_{0,0}(z)\sum_{i=1}^{q} a_{0}^{(i)}\phi_{i-1,j-1}^{(1)}(z), \qquad j\geq 1,\label{6-4-2}\\
\phi_{i,0}(z) & =\phi_{0,0}(z)\sum_{j=1}^{p} a_{0}^{(-j)}\phi_{i-1,j-1}^{(1)}(z),\qquad i\geq 1,\label{6-4-3}\\
\phi_{i,j}(z) & =\frac{\phi_{i,0}(z)\,\phi_{0,j}(z)}{\phi_{0,0}(z)}+\phi^{(1)}_{i-1,j-1}(z),\qquad i,j\geq 1.\label{6-4-4}\
\end{align}
\end{theorem}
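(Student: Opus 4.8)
The plan is to prove Theorem~\ref{theorem3} by partitioning the operators $\mathcal{H}$ and $\mathcal{H}_1$ according to a splitting of $\mathcal{E}_0$, and then using a block-matrix inversion formula (Schur complement) to extract the resolvent entries $\phi_{i,j}(z)$. The central observation is that the resolvent functions are, by \eqref{7-9-22-2}, \eqref{8-26-22-4}, and \eqref{seriesinvform}, precisely the matrix entries of $(zI-\mathcal{H})^{-1}$ and $(zI-\mathcal{H}_1)^{-1}$; the machinery of $\mathcal{L}_{0}((z^{-1}))$ developed above (especially Lemma~\ref{lemmainversion} guaranteeing invertibility of $zI-L$) makes all formal manipulations rigorous. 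Since $\mathcal{H}_1$ has matrix representation $H^{[1]}$ obtained by deleting the first row and column of $H$, the operators $\mathcal{H}$ and $\mathcal{H}_1$ are naturally related by a rank-one-type decomposition based at the index $0$.

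First I would split $\mathcal{E}_0=\mathbb{C}e_{0}\oplus\mathcal{E}_{0}'$, where $\mathcal{E}_{0}'$ is spanned by $\{e_n\}_{n\geq 1}$, and write $M:=zI-\mathcal{H}$ in the corresponding $2\times 2$ block form
\[
M=\begin{pmatrix} z-a_{0}^{(0)} & b^{T}\\ c & M' \end{pmatrix},
\]
where the row block $b^{T}$ collects the entries $-a_{0}^{(i)}$ ($1\le i\le q$) from the top row of $H$ to the right of the diagonal, the column block $c$ collects the entries $-a_{0}^{(-j)}$ ($1\le j\le p$) below the diagonal in the first column, and $M'=zI-\mathcal{H}_{1}$ is the operator on $\mathcal{E}_{0}'$ whose matrix is $zI-H^{[1]}$. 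The key structural fact, read off from \eqref{matrixH}, is that $b^{T}$ and $c$ each have only finitely many nonzero entries (at positions $1,\dots,q$ and $1,\dots,p$ respectively), so all the ensuing sums are finite and the block inversion is legitimate in $\mathcal{L}_{0}((z^{-1}))$.

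Next I would apply the standard block-inverse (Schur complement) formulas. Writing $s:=(z-a_{0}^{(0)})-b^{T}(M')^{-1}c$ for the $(0,0)$ Schur complement, the block formulas give
\[
(M^{-1})_{0,0}=s^{-1},\qquad (M^{-1})_{0,\bullet}=-s^{-1}b^{T}(M')^{-1},\qquad (M^{-1})_{\bullet,0}=-(M')^{-1}c\,s^{-1},
\]
and for the lower-right block
\[
(M^{-1})_{\bullet,\bullet}=(M')^{-1}+(M')^{-1}c\,s^{-1}b^{T}(M')^{-1}.
\]
Now I would identify $(M')^{-1}$ with the resolvent of $\mathcal{H}_1$, so that its $(i-1,j-1)$-entry is exactly $\phi^{(1)}_{i-1,j-1}(z)$ (the shift by one accounts for the reindexing of $\mathcal{E}_0'$). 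Expanding each block entry in terms of scalar matrix entries and substituting the explicit values of $b^{T}$ and $c$ turns $s^{-1}=(M^{-1})_{0,0}=\phi_{0,0}(z)$ into \eqref{6-4-1}, the off-diagonal blocks into \eqref{6-4-2} and \eqref{6-4-3}, and the correction term in the lower-right block into the rank-structured product in \eqref{6-4-4}, since $(M^{-1})_{\bullet,0}\,(M^{-1})_{0,\bullet}/(M^{-1})_{0,0}$ reproduces $\phi_{i,0}(z)\phi_{0,j}(z)/\phi_{0,0}(z)$.

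The main obstacle is justifying the block inversion rigorously in the formal-series ring $\mathcal{L}_{0}((z^{-1}))$ rather than over a field: inverses here are two-sided but noncommutative, one must confirm that the Schur complement $s$ is itself an invertible element (it has the form $z-a_0^{(0)}-O(z^{-1})$, hence invertible by Lemma~\ref{lemmainversion}), and that $(M')^{-1}$ exists as an element whose matrix representation satisfies conditions 1) and 2) above so the products $b^T(M')^{-1}$, $(M')^{-1}c$ are well-defined finite expressions. Once invertibility and the finiteness of $b^{T}$, $c$ are secured, the algebraic identities are the classical Schur-complement formulas and the remaining work is the bookkeeping of translating block entries into the indexed scalar series $\phi_{i,j}$ and $\phi^{(1)}_{i-1,j-1}$.
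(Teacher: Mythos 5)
Your proposal is correct and follows essentially the same route as the paper's own proof: partitioning $zI-H$ with the $(0,0)$ entry as a $1\times 1$ block, identifying $D^{-1}=(zI-H^{[1]})^{-1}$ with the resolvent entries $\phi^{(1)}_{i-1,j-1}(z)$, and applying the Schur-complement block-inverse formula in $\mathcal{L}_{0}((z^{-1}))$ with invertibility secured by Lemma~\ref{lemmainversion}. The only cosmetic difference is that the paper writes the block inverse via an explicit three-factor product rather than quoting the Schur formulas directly, but the algebra and the reading-off of \eqref{6-4-1}--\eqref{6-4-4} are identical.
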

\begin{proof}
We consider first the matrix 
\[
zI-H=\begin{pmatrix}
z-a_{0}^{(0)} & \cdots & -a_{0}^{(q)} & & & \\
\vdots & z-a_{1}^{(0)} & \cdots & -a_{1}^{(q)} & &\\
-a_{0}^{(-p)} & \vdots & z-a_{2}^{(0)} & \cdots & -a_{2}^{(q)} &\\
 & -a_{1}^{(-p)} & \vdots & \ddots & & \ddots \\
 &  & -a_{2}^{(-p)} & & \ddots \\
 & & & \ddots &  
\end{pmatrix}
\]
which represents $zI-\mathcal{H}$. We partition this matrix into blocks
\[
zI-H=\begin{pmatrix}
A & B \\
C & D 
\end{pmatrix},
\]
where $A$ is the $1\times1$ matrix
\begin{equation}\label{8-31-22-2A}
A=\begin{pmatrix}
z-a_0^{(0)}
\end{pmatrix}.
\end{equation}
So the block $B$ is the infinite row vector   
\[
B=\begin{pmatrix}
-a_{0}^{(1)} & \cdots & -a_{0}^{(q)} & 0 & 0 & 0& \cdots\\
\end{pmatrix}
\]
with entries
\begin{equation}\label{8-31-22-2B}
\begin{cases}
b_{j}=-a_{0}^{(j)}, & 1\leq j\leq q,\\
b_{j}=0, & \mbox{otherwise}.
\end{cases}
\end{equation}
$C$ is the infinite column vector
\[
C=\begin{pmatrix}
-a_{0}^{(-1)}\,\\
\vdots\,\\
-a_{0}^{(-p)}\,\\
 0\,\\
0\,\\
0\,\\
\vdots\\
\end{pmatrix}
\]
with entries
\begin{equation}\label{8-31-22-2C}
\begin{cases}
c_{i}=-a_{0}^{(-i)}, & 1\leq i\leq p,\\
c_{i}=0, & \mbox{otherwise}.
\end{cases}
\end{equation}
The block $D=zI-{H}^{[1]}$ is the matrix
\[
\begin{pmatrix}
z-a_{1}^{(0)} & \cdots & -a_{1}^{(q)} & & & \\
\vdots & z-a_{2}^{(0)} & \cdots & -a_{2}^{(q)} & &\\
-a_{1}^{(-p)} & \vdots & z-a_{3}^{(0)} & \cdots & -a_{3}^{(q)} &\\
 & -a_{2}^{(-p)} & \vdots & \ddots & & \ddots \\
 &  & -a_{3}^{(-p)} & & \ddots \\
 & & & \ddots &  
\end{pmatrix}.
\]
The series $zI-\mathcal{H}_{1}$ is invertible, so $D$ is invertible and 
\begin{equation}\label{8-31-22-55}
\mathcal{M}_{(zI-\mathcal{H}_{1})^{-1}}(z)=(\phi_{i-1,j-1}^{(1)}(z))_{i,j=1}^{\infty}=D^{-1}.
\end{equation}
From \eqref{8-31-22-2C} and \eqref{8-31-22-55} we obtain that $D^{-1}C=((D^{-1}C)_{i})_{i=1}^{\infty}$ is the infinite column vector
\[
\begin{pmatrix}
-\sum_{j=1}^{p} a_{0}^{(-j)}\phi_{0,j-1}^{(1)}(z)\\
\vdots \\ -\sum_{j=1}^{p} a_{0}^{(-j)}\phi_{i-1,j-1}^{(1)}(z)  \\
\vdots\\
\\
\end{pmatrix}
\]
with entries
\begin{equation}\label{8-31-22-2D^-1C} 
(D^{-1}C)_{i}= -\sum_{j=1}^{p} a_{0}^{(-j)}\phi_{i-1,j-1}^{(1)}(z), \qquad i \geq 1.
\end{equation}
Note that $A-BD^{-1}C$ is of size $1\times 1$. Using \eqref{8-31-22-2A}, \eqref{8-31-22-2B}, and \eqref{8-31-22-2D^-1C} we obtain
$$
A-BD^{-1}C=z-a_{0}^{(0)}-\sum_{i=1}^{q}\sum_{j=1}^{p}a_{0}^ {(i)}a_{0}^{(-j)}\,\phi_{i-1,j-1}^{(1)}(z).
$$
This series is clearly invertible, so
\begin{equation}\label{invSchur}
(A-BD^{-1}C)^{-1}=\frac{1}{z-a_{0}^{(0)}-\sum_{i=1}^{q}\sum_{j=1}^{p}a_{0}^ {(i)}a_{0}^{(-j)}\,\phi_{i-1,j-1}^{(1)}(z)}.
\end{equation}
The matrix $BD^{-1}=((BD^{-1})_{j})_{j=1}^{\infty}$ is the infinite row vector
\[
\begin{pmatrix}
-\sum_{i=1}^{q} a_{0}^{(i)}\phi_{i-1,0}^{(1)}(z) & \cdots & -\sum_{i=1}^{q} a_{0}^{(i)}\phi_{i-1,j-1}^{(1)}(z)  &\cdots & 
\end{pmatrix}
\]
with entries
\begin{equation}\label{8-31-22-2BD^-1} 
(BD^{-1})_{j}= -\sum_{i=1}^{q} a_{0}^{(i)}\phi_{i-1,j-1}^{(1)}(z), \qquad j \geq 1.
\end{equation}

Consider the expression
\begin{gather}
\begin{pmatrix}
I & 0 \\[0.1em]
-D^{-1}C & I
\end{pmatrix}
\begin{pmatrix}
(A-BD^{-1}C)^{-1}& 0\\[0.1em]
0 & D^{-1}
\end{pmatrix}
\begin{pmatrix}
I & -BD^{-1} \\[0.1em]
0 & I
\end{pmatrix}\notag\\
=\begin{pmatrix}
(A-BD^{-1}C)^{-1}& -(A-BD^{-1}C)^{-1}BD^{-1}\\[0.1em]
-D^{-1}C(A-BD^{-1}C)^{-1}& D^{-1}C(A-BD^{-1}C)^{-1}BD^{-1}+D^{-1}
\end{pmatrix}.\label{8-31-22-22}
\end{gather}
It is easy to see that each one of the three factors on the left-hand side is the matrix representation of a series in $\mathcal{L}_{0}((z^{-1}))$, so the product is the matrix representation of a series in $\mathcal{L}_{0}((z^{-1}))$. Moreover, the matrix \eqref{8-31-22-22} is the inverse of 
\[
\begin{pmatrix}
A & B \\
C & D 
\end{pmatrix},
\]
therefore \eqref{8-31-22-22} is the matrix representation of $(zI-\mathcal{H})^{-1}$ and so we obtain
\begin{equation}\label{invblock}
(\phi_{i,j}(z))_{i,j\geq 0}=\begin{pmatrix}
(A-BD^{-1}C)^{-1}& -(A-BD^{-1}C)^{-1}BD^{-1}\\[0.1em]
-D^{-1}C(A-BD^{-1}C)^{-1}& D^{-1}C(A-BD^{-1}C)^{-1}BD^{-1}+D^{-1}
\end{pmatrix}.
\end{equation}
From \eqref{invSchur} and \eqref{invblock} we get
\[
\phi_{0,0}(z) =(A-BD^{-1}C)^{-1}=\frac{1}{z-a_{0}^{(0)}-\sum_{ i=1}^{q}\sum_{j=1}^{p}a_{0}^{(i)}a_{0}^{(-j)}\,\phi_{i-1,j -1}^{(1)}(z)},
\]
which proves \eqref{6-4-1}.

By \eqref{invblock} we have
$$
\phi_{0,j}(z)=(-(A-BD^{-1}C)^{-1}BD^{-1})_j=-\phi_{0,0}(z)( BD^{-1})_j, \quad j \ge 1.
$$
Hence, applying \eqref{8-31-22-2BD^-1} we obtain
\[
\phi_{0,j}(z)=\phi_{0,0}(z)\sum_{i=1}^{q} a_{0}^{(i)}\phi_{i-1, j-1}^{(1)}(z), \quad j \ge 1,
\]
which establishes \eqref{6-4-2}.

By \eqref{invblock} we have
$$
\phi_{i,0}(z)=(-D^{-1}C(A-BD^{-1}C)^{-1})_i=-\phi_{0,0}(z) (D^{-1}C)_i, \quad i \ge 1.
$$
So using \eqref{8-31-22-2D^-1C} we get
\begin{equation}\label{9-9-22-2}
\phi_{i,0}(z)=-\phi_{0,0}(z) (D^{-1}C)_i=\phi_{0,0}(z)\sum_{j=1}^{p} a_{0}^{(-j)}\phi_{i-1 ,j-1}^{(1)}(z), \quad i \ge 1,
\end{equation}
which justifies \eqref{6-4-3}.

Finally we prove \eqref{6-4-4}. Let $i,j\geq 1$. From \eqref{invblock} we get
\[
\phi_{i,j}(z)=(D^{-1}C(A-BD^{-1}C)^{-1}BD^{-1}+D^{-1})_{i,j}.
\]
Therefore, applying formula \eqref{9-9-22-2} for the entries of $D^{-1}C(A-BD^{-1}C)^{-1}$ and formulas \eqref{8-31-22-2BD^-1} and \eqref{8-31-22-55} for the  entries of $BD^{-1}$ and $ D^{-1}$ we obtain
\[
\phi_{i,j}(z)=\phi_{0,0}(z)(\sum_{j=1}^{p} a_{0}^{(-j)}\phi_{i-1 ,j-1}^{(1)}(z))
(\sum_{i=1}^{q} a_{0}^{(i)}\phi_{i-1,j-1}^{(1)}(z))+\phi_{i-1 ,j-1}^{(1)}(z),
\]
which implies (see \eqref{6-4-2} and \eqref{6-4-3})
\[
\phi_{i,j}(z) =\frac{\phi_{i,0}(z)\,\phi_{0,j}(z)}{\phi_{0,0}(z)}+ \phi^{(1)}_{i-1,j-1}(z), \quad i,j\ge 1.
\]
So \eqref{6-4-4} is proved.
\end{proof}

\section{Resolvents of the operator $\mathcal{W}$}\label{Wmatrix} 
\subsection{Relations between resolvents of one-sided and two-sided operators}

Let $\{e_{n}\}_{n\in\mathbb{Z}}$ denote the standard basis vectors in the space $\ell^{2}(\mathbb{Z})$ with the inner product $\langle\cdot,\cdot\rangle$. Recall that $\mathcal{E}=\mbox{span}\{e_{n}\}_{n\in\mathbb{Z}}$, $\mathcal{E}_{0}=\mbox{span}\{e_{n}\}_{n\geq 0}$, and let $\mathcal{E}_{1}=\mbox{span}\{e_{n}\}_{n\leq -1}$. Consider the spaces of linear operators  $\mathcal{L}_{i,j}=\{L:\mathcal{E}_{i}\rightarrow\mathcal{E}_{j}\}$, $0\leq i,j\leq 1$, and $\mathcal{L}=\{L: \mathcal{E}\rightarrow\mathcal{E}\}$. 

Let $\mathcal{L}((z^{-1}))$ denote the set of all formal Laurent series
\[
A(z)=\sum_{n\in\mathbb{Z}} A_{n} z^{n}
\]
with coefficients $A_{n}\in\mathcal{L}$, and having only finitely many non-zero coefficients with index $n>0$. Similarly we define the sets $\mathcal{L}_{i,j}((z^{-1}))$ of series with coefficients in $\mathcal{L}_{i,j}$. If $i=j$, we write $\mathcal{L}_{i}((z^{-1}))=\mathcal{L}_{i,i}((z^{-1}))$. If $A(z)\in\mathcal{L}_{j,k}((z^{-1}))$ and $B(z)\in\mathcal{L}_{i,j}((z^{-1}))$, then 
\[
(A\cdot B)(z)=\sum_{n\in\mathbb{Z}}\Big(\sum_{l\in\mathbb{Z}} A_{l}\, B_{n-l}\Big)z^{n}\in\mathcal{L}_{i,k}((z^{-1})).
\]
An operator $L\in\mathcal{L}$ has the two-sided matrix representation $(\langle L e_{j},e_{i}\rangle)_{i,j\in\mathbb{Z}}$. Similarly, the matrix representation of an operator $L\in\mathcal{L}_{i,j}$ is a matrix with rows and columns indexed by the indices in the bases for $\mathcal{E}_{j}$ and $\mathcal{E}_{i}$ respectively. The matrix representation of a series $A(z)=\sum_{n\in\mathbb{Z}} A_{n} z^{n}\in\mathcal{L}((z^{-1}))$ is
\[
\mathcal{M}_{A}(z)=(f_{i,j}(z))_{i,j\in\mathbb{Z}}=(\sum_{n\in\mathbb{Z}}\langle A_{n} e_{j},e_{i}\rangle z^{n})_{i,j\in\mathbb{Z}}
\]
and similarly for the other sets of formal Laurent series. If the product $A\cdot B$ is defined for a pair of formal Laurent series $A(z)$ and $B(z)$, then $\mathcal{M}_{A\cdot B}(z)=\mathcal{M}_{A}(z) \mathcal{M}_{B}(z)$.

Observe also that a matrix $(f_{i,j}(z))_{i,j\in\mathbb{Z}}$ of scalar Laurent series is the matrix representation of a series in $\mathcal{L}((z^{-1}))$ if and only if the following two properties hold:
\begin{itemize}
\item[1)] There exists $d\in\mathbb{Z}$ such that $\deg(f_{i,j}(z))\leq d$ for all $i,j\in\mathbb{Z}$.
\item[2)] For every $n\leq d$ and $j\in\mathbb{Z}$, there exists $\ell\geq 0$ such that $[z^{n}]f_{i,j}(z)=0$ for all $i\in\mathbb{Z}$, $|i|\geq \ell$.
\end{itemize} 

The operator $\mathcal{W}\in\mathcal{L}$ was defined in \eqref{8-26-22-2}. Its matrix representation is the two-sided matrix $W=(\langle\mathcal{W} e_{j},e_{i}\rangle)_{i,j\in\mathbb{Z}}$ in \eqref{matrixW}. We partition the matrix $zI-W$ in block form
\begin{equation}\label{9-21-22-1}
zI-W=\begin{pmatrix}
A & B \\
C & D \\
\end{pmatrix},
\end{equation}
where $A,B,C,D$ are matrices described below. The matrix $A$ is the following infinite matrix 
\begin{equation}\label{defmatrixA}
A=\begin{pmatrix}
   &  & \ddots & &\,\,\\
  & \ddots & &- a_{-q-2}^{(q)} & \,\,\\
 \ddots&   & \ddots & \vdots & -a_{-q-1}^{(q)} \,\,\\
   & -a_{-p-2}^{(-p)} & \ldots & z-a_{-2}^{(0)}& \vdots\,\,\\[0.5em]
 & &-a_{-p-1}^{(-p)} & \ldots & z-a_{-1}^{(0)}\,\, 
\end{pmatrix}.
\end{equation}
We use indices $i,j\leq -1$ to label the entries of $A$, which we represent in the form $A=zI-W_{1}=((zI-W_{1})_{i,j})_{i,j \le -1}$. So $W_{1}=((W_{1})_{i,j})_{i,j \le -1}$ is the block of $W$
\begin{equation}\label{def:matrixW1}
W_{1}=\begin{pmatrix}
   &  & \ddots & & \,\,\\
  & \ddots & & a_{-q-2}^{(q)} & \,\,\\
\ddots &   & \ddots & \vdots & a_{-q-1}^{(q)} \,\,\\
   & a_{-p-2}^{(-p)} & \ldots & a_{-2}^{(0)}& \vdots \,\,\\[0.5em]
 & &a_{-p-1}^{(-p)} & \ldots & a_{-1}^{(0)} \,\,  
\end{pmatrix}
\end{equation}
with entries 
\[
\begin{cases}
(W_{1})_{i,j}=a_{\min (i,j)}^{(j-i)}, & -p \leq j-i\leq q, \,\,\, i, j \leq -1,\\
(W_{1})_{i,j}=0, & \mbox{otherwise}.
\end{cases}
\]
We can see $W_{1}$ as the matrix representation of a linear operator $\mathcal{W}_{1}:\mathcal{E}_{1}\rightarrow\mathcal{E}_{1}$ in the basis $\{e_{n}\}_{n\leq -1}$. 

The matrix $B=(B_{i,j})_{ i\le -1,\,j\geq 0}$ is an infinite matrix with a triangular set of non-zero entries in the lower-left corner:
\[ 
B=\begin{pmatrix}
  &  & & & & &\\
   &  & & & & &\\
   -a_{-q}^{(q)} &  & & & & &\\
  \vdots& \ddots &  &  &  & &\\
  -a_{-2}^{(2)}  & \ddots & \ddots &  & & & &\\[0.5em]
 -a_{-1}^{(1)}  & -a_{-1}^{(2)} &\cdots &-a_{-1}^{(q)} &  & & 
\end{pmatrix}.
\]
$B$ is a block of $-W$ with entries
\begin{equation}\label{8-31-22-2BB}
\begin{cases}
B_{i,j}=-a_{i}^{(j-i)}, & 1 \leq j-i\leq q,    \ \ i\le -1, \  j \ge 0,\\
B_{i,j}=0, & \mbox{otherwise}.
\end{cases} 
\end{equation}
The matrix $B$ is clearly the matrix representation of an operator from $\mathcal{E}_{0}$ to $\mathcal{E}_{1}$.

The matrix $C=(C_{i,j})_{i\geq 0,\,j\leq -1}$ is a  matrix with a triangular set of non-zero entries in the upper-right corner:
\[
\begin{pmatrix}
& & & -a_{-p}^{(-p)}& \ldots& -a_{-2}^{(-2)}& -a_{-1}^{(-1)}\\
& &  &  & \ddots& \ddots& -a_{-1}^{(-2)}\\
& &  &  & & \ddots & \vdots \\
& & &  &  &  & -a_{-1}^{(-p)} \\
& &   &  & &  & & \\
& &  &  & & & 
\end{pmatrix}.
\]
$C$ is also a block of $-W$ with entries
\begin{equation}\label{8-31-22-2CC}
\begin{cases}
C_{i,j}=-a_{j}^{(j-i)}, & -p \leq j-i\leq -1,    \ \ i\ge 0, \  j \le -1,\\
C_{i,j}=0, & \mbox{otherwise}.
\end{cases}
\end{equation}
The matrix $C$ is the matrix representation of an operator from $\mathcal{E}_{1}$ to $\mathcal{E}_{0}$.

Finally, $D=zI-H=(zI-H)_{ i,j \ge  0}$ is the invertible matrix
\[
\begin{pmatrix}
z-a_{0}^{(0)} & \ldots & -a_{0}^{(q)} & &\\
\vdots & z-a_{1}^{(0)} & \ldots & -a_{1}^{(q)} & \\
-a_{0}^{(-p)} & \vdots & \ddots &  & \ddots\\
 & -a_{1}^{(-p)} &  & \ddots &\\
 &  & \ddots& 
\end{pmatrix}.
\]
Denote by $\chi_{i,j}(z)$ the system of resolvent functions associated with the operator $\mathcal{W}_{1}$ with matrix representation \eqref{def:matrixW1}, i.e.,
\[
\chi_{i,j}(z):=\sum_{n=0}^{\infty}\frac{\langle\mathcal{W}_{1}^{n}\, e_{j}, e_{i}\rangle}{z^{n+1}},
\qquad i,j \le -1.
\]
So by definition the inverse of the matrix $A=zI-W_{1}$ in \eqref{defmatrixA} is 
\[
A^{-1}=(zI-W_{1})^{-1}=(\chi_{i,j}(z))_{i,j\leq -1}.
\]

Let $K=(K_{i,j})_{ i,j \ge 0}$  be the banded matrix with the following entries:
\begin{equation}\label{8-31-22-101}
\begin{aligned}
K_{i,j} &=h_{i,j}+\sum ^{p-i}_{l=1}\sum^{q-j}_{m=1}a^{(-(l+i))}_{-l}a^{(m+j)}_{-m}\chi_{-l,-m}(z), \qquad  0\le i \le p-1,\quad  0 \le j \le
 q-1,\\
K_{i,j} &=h_{i,j}, \qquad \mbox{otherwise},
\end{aligned}
\end{equation}
where $h_{i,j}$ is the $(i,j)$-entry of $H$. Clearly, the matrix $K=(K_{i,j})_{i,j\geq 0}$ is the matrix representation of a formal Laurent series $\mathcal{K}(z)\in\mathcal{L}_{0}((z^{-1}))$ (see properties $1)$--$2)$ in Section~\ref{sec:rfH}). Moreover, the series $zI-\mathcal{K}(z)$ is invertible in $\mathcal{L}_{0}((z^{-1}))$ by Lemma~\ref{lemmainversion}. Therefore the matrix $zI-K$ is invertible, and we define the scalar series $\zeta_{i,j}(z)$ as the $(i,j)$-entry of the matrix $(zI-K)^{-1}$, i.e., we have
\begin{equation}\label{10-05-22-11}
(zI-K)^{-1}=(\zeta_{i,j}(z))_{i,j\geq 0}.
\end{equation}
By \eqref{8-26-22-3}, we also have
\[
(zI-W)^{-1}=(\psi_{i,j}(z))_{i,j\in\mathbb{Z}}.
\]
The following result states that when $i$ and $j$ are non-negative indices, the resolvent  function $\psi_{i,j}(z)$ and the $(i,j)$-entry $\zeta_{i,j}(z)$ of the matrix $(zI-K)^{-1}$ are the same.

\begin{theorem}\label{10-3-22-1}
We have the following identities:
\begin{equation}\label{10-05-22-1}
W_{i,j}(z)=\psi_{i,j}(z) =\zeta_{i,j}(z), \qquad i,j  \ge 0. \\
\end{equation}
\end{theorem}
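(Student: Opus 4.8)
The plan is to establish the identity $W_{i,j}(z)=\zeta_{i,j}(z)$ for $i,j\geq 0$ via a block inversion of the two-sided matrix $zI-W$, in complete analogy with the Schur-complement computation carried out in the proof of Theorem~\ref{theorem3}. We already have $W_{i,j}(z)=\psi_{i,j}(z)$ from Theorem~\ref{theorem2} (equation~\eqref{6-26-22-3}), so the genuine content is the second equality. First I would invoke the block partition \eqref{9-21-22-1} of $zI-W$ into the four blocks $A=zI-W_1$, $B$, $C$, $D=zI-H$, where $A$ indexes the negative coordinates and $D$ the nonnegative ones. Because $zI-\mathcal W_1$ is invertible (Lemma~\ref{lemmainversion} applied to the operator $\mathcal W_1$ on $\mathcal E_1$), $A^{-1}=(\chi_{i,j}(z))_{i,j\leq-1}$ exists, and this is exactly the block whose invertibility licenses the Schur-complement formula with respect to the $A$-corner.

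The key computational step is to identify the Schur complement $D-CA^{-1}B$ with $zI-K$. I would compute the $(i,j)$-entry of $CA^{-1}B$ for $i,j\geq 0$ by expanding the matrix product: the nonzero entries of $C$ are $C_{i,-l}=-a_{-l}^{(-(l+i))}$ (from \eqref{8-31-22-2CC}, rewriting $j=-l$), the entries of $A^{-1}$ are $\chi_{-l,-m}(z)$, and the nonzero entries of $B$ are $B_{-m,j}=-a_{-m}^{(m+j)}$ (from \eqref{8-31-22-2BB}). Multiplying the two sign factors gives $+a_{-l}^{(-(l+i))}a_{-m}^{(m+j)}$, and the triangular support of $B$ and $C$ forces the ranges $1\leq l\leq p-i$ and $1\leq m\leq q-j$, so that
\[
(CA^{-1}B)_{i,j}=\sum_{l=1}^{p-i}\sum_{m=1}^{q-j}a_{-l}^{(-(l+i))}a_{-m}^{(m+j)}\,\chi_{-l,-m}(z),\qquad 0\leq i\leq p-1,\ 0\leq j\leq q-1,
\]
and $(CA^{-1}B)_{i,j}=0$ otherwise. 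Comparing with \eqref{8-31-22-101}, this is precisely $K-H$ restricted to the correct submatrix, whence $D-CA^{-1}B=(zI-H)-(K-H)=zI-K$.

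Once the Schur complement is identified, I would write the standard block-inverse formula and read off the lower-right block: the $(i,j)$-entry of $(zI-W)^{-1}$ for $i,j\geq 0$ equals the $(i,j)$-entry of $(D-CA^{-1}B)^{-1}=(zI-K)^{-1}=(\zeta_{i,j}(z))_{i,j\geq 0}$. This gives $\psi_{i,j}(z)=\zeta_{i,j}(z)$ and, combined with \eqref{6-26-22-3}, the full chain \eqref{10-05-22-1}.

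The main obstacle is not the algebra of the entries but the justification that these block manipulations are legitimate for doubly-infinite matrices, where associativity of products and the meaning of ``inverse'' are not automatic. The clean way around this is to work entirely inside the ring $\mathcal L((z^{-1}))$ (and its sectored subrings $\mathcal L_{i,j}((z^{-1}))$ introduced in Section~\ref{Wmatrix}), exactly as the proof of Theorem~\ref{theorem3} worked inside $\mathcal L_0((z^{-1}))$. Concretely, I would exhibit the factorization of \eqref{8-31-22-22}-type for $(zI-W)^{-1}$ as a product of three block-triangular matrices, each of which is the matrix representation of a bona fide series in $\mathcal L((z^{-1}))$ (one must check the degree bound~1) and the column-finiteness condition~2) for each factor, using that $C$ and $B$ have only finitely many nonzero entries in each relevant line and that $A^{-1}$, $(zI-K)^{-1}$ lie in the appropriate sectored rings). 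Since $A=zI-\mathcal W_1$ and $zI-\mathcal K$ are invertible and the map $A(z)\mapsto\mathcal M_A(z)$ is injective and multiplicative, the product of these three representable series is the genuine inverse of $zI-\mathcal W$, and its lower-right block yields the claim. Verifying conditions~1)--2) for each factor is the one place demanding care, but it is routine given the banded/triangular structure of $B$, $C$ and the degree estimates already recorded for the resolvent series.
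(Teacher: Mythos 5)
Your proposal is correct and takes essentially the same approach as the paper's proof: the same block partition \eqref{9-21-22-1}, the same entrywise identification of the Schur complement $D-CA^{-1}B$ with $zI-K$ via the triangular supports of $B$ and $C$, and the same justification of the block inversion inside the ring $\mathcal{L}((z^{-1}))$ using the injective, multiplicative map $A(z)\mapsto\mathcal{M}_{A}(z)$ together with Lemma \ref{lemmainversion}. The only cosmetic differences are that the paper computes $A^{-1}B$ first and then $H+CA^{-1}B$ (rather than $CA^{-1}B$ directly), and it verifies the inverse through the two-factor product \eqref{blockinverse} instead of a three-factor triangular factorization.
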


\begin{proof}
The equality $W_{i,j}(z)=\psi_{i,j}(z)$ was already proved in Theorem \ref{theorem2}. Let's now prove the equality $\psi_{i,j}(z)=\zeta_{i,j}(z).$
The matrix $zI-W$ is invertible since it is the matrix representation of the invertible series $zI-\mathcal{W}\in\mathcal{L}((z^{-1}))$. Recall that we have the block partition \eqref{9-21-22-1}. By Lemma~\ref{lemmainversion}, the matrix $D-CA^{-1}B$ is invertible as it represents an invertible series in the space $\mathcal{L}_{0}((z^{-1}))$. 
By the same reasoning, the matrix $A-BD^{-1}C$ is also invertible. Consider the expression
\begin{gather}
\begin{pmatrix}
(A-BD^{-1}C)^{-1} & 0\\[0.1em]
0 & (D-CA^{-1}B)^{-1}
\end{pmatrix}
\begin{pmatrix}
I & -BD^{-1} \\[0.1em]
-CA^{-1} & I
\end{pmatrix}\notag\\
=\begin{pmatrix}
(A-BD^{-1}C)^{-1}& -(A-BD^{-1}C)^{-1}BD^{-1}\\[0.1em]
-(D-CA^{-1}B)^{-1}CA^{-1} & (D-CA^{-1}B)^{-1}
\end{pmatrix}.\label{blockinverse}
\end{gather}
It is easy to see that \eqref{blockinverse} is the matrix representation of a series in $\mathcal{L}((z^{-1}))$. Moreover, this matrix is the inverse of 
\[
\begin{pmatrix}
A & B \\
C & D 
\end{pmatrix},
\] 
therefore we obtain
\begin{equation}\label{10-3-22-2}
(zI-W)^{-1}=\begin{pmatrix}
A & B \\
C & D 
\end{pmatrix}^{-1}=\begin{pmatrix}
(A-BD^{-1}C)^{-1}& -(A-BD^{-1}C)^{-1}BD^{-1}\\[0.1em]
-(D-CA^{-1}B)^{-1}CA^{-1} & (D-CA^{-1}B)^{-1}
\end{pmatrix}.
\end{equation}

Recall
\begin{equation}\label{8-31-22-4}
(zI-W)^{-1}=\begin{pmatrix}
A & B \\
C & D
\end{pmatrix}^{-1}= (\psi_{i,j}(z))_{i , j \in \mathbb{Z}} 
\end{equation}
and
\begin{equation}\label{10-3-22-5}
A^{-1}=(zI-W_{1})^{-1}=
 (\chi_{i,j}(z))_{i,j\leq -1}.
\end{equation}
In virtue of \eqref{8-31-22-4}, \eqref{10-3-22-2}, and \eqref{10-05-22-11}, the proof of \eqref{10-05-22-1} reduces to show
\[
zI-K=D-CA^{-1}B,
\]
which is equivalent to
\begin{equation}\label{10-05-22-2}
K=H+CA^{-1}B.
\end{equation}
\noindent
Using \eqref{10-3-22-5} and \eqref{8-31-22-2BB}, and applying the rules of matrix multiplication, we get the following formulas
for the entries of the matrix  $A^{-1}B=((A^{-1}B)_{i,j})_{i \le -1,  j \ge 0}$:
\begin{align*}
 (A^{-1}B)_{i,j} &=-\sum^{q-j}_{m=1}a^{(m+j)}_{-m}\chi_{i,-m}(z), \quad  i \le -1,\quad  0 \le j \le q-1,\\
(A^{-1}B)_{i,j} &=0, \qquad  i \le -1, \quad j \ge q.\\
\end{align*}
Therefore, by \eqref{8-31-22-2CC}, the entries of the matrix $H+CA^{-1}B=((H+CA^{-1}B)_{i,j})_{ i,j \ge 0}$ are
\begin{align*}
(H+CA^{-1}B)_{i,j} &=h_{i,j}+\sum ^{p-i}_{l=1}\sum^{q-j}_{m=1}a^{(-(l+i))}_{-l}a^{(m+j)}_{-m}\chi_{-l,-m}(z), \quad  0\le i \le p-1,\quad  0 \le j \le q-1,\\
(H+CA^{-1}B)_{i,j} &=h_{i,j},\qquad  \mbox{otherwise.}
\end{align*}
These relations and \eqref{8-31-22-101} justify \eqref{10-05-22-2}, and this concludes the proof.
\end{proof}

\subsection{The collection of paths $\widehat{\mathcal{D}}_{[n,i,j]}$}\label{subsecDhat}

We now require an additional collection of lattice paths. For integers $n\geq 0$ and $i,j\leq -1$, the collection $\widehat{\mathcal{D}}_{[n,i,j]}$ consists of those paths in $\mathcal{P}_{[n,i,j]}$ that never go above the line $y=-1$. The weight polynomials associated with this collection are denoted by $V_{[n,i,j]}$, thus
\[
V_{[n,i,j]}:=\sum_{\gamma\in\widehat{\mathcal{D}}_{[n,i, j]}}w(\gamma).
\]  
For integers $i, j \leq -1$, we define the Laurent series $V_{i,j}(z)$ as follows:
\begin{equation}\label{def:Vijseries}
V_{i,j}(z):=\sum_{n=0}^{\infty}\frac{V_{[n,i,j]}}{z^{n+1}},
\end{equation}
where  $V_{[0,i,j]}=1$ if $i=j$, and $V_{[0,i,j]}=0$ if  $i \neq j$.
Now we assert that 
the  resolvent functions $\chi_{i,j}(z)$ associated with the operator $\mathcal{W}_{1}$ with matrix representation \eqref{def:matrixW1}
coincide with the Laurent series 
$V_{i,j}(z)$ constructed in terms of the weight polynomials associated with the collections $\widehat{\mathcal{D}}_{[n,i,j]}$ of lattice paths. 
\begin{proposition}\label{theoremV}
We have the following identities:
\begin{equation*}
\chi_{i,j}(z) =V_{i,j}(z), \qquad i,j \le -1.  
\end{equation*} 
\end{proposition}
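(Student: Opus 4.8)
The plan is to follow verbatim the strategy used for the identity \eqref{6-26-22-3} in the proof of Theorem \ref{theorem2}, since the operator $\mathcal{W}_{1}$ is related to the restricted collections $\widehat{\mathcal{D}}_{[n,i,j]}$ in exactly the same way that $\mathcal{W}$ is related to the unrestricted collections $\mathcal{P}_{[n,i,j]}$. Comparing the definition of $\chi_{i,j}(z)$ with \eqref{def:Vijseries}, it suffices to prove that for all integers $n\geq 0$ and $i,j\leq -1$,
\[
\langle \mathcal{W}_{1}^{n} e_{j}, e_{i}\rangle = (W_{1}^{n})_{i,j} = V_{[n,i,j]}.
\]
The case $n=0$ is immediate, since both sides equal $\delta_{i,j}$.

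For $n\geq 1$ I would set $i_{0}=i$ and $i_{n}=j$ and expand the matrix power. The crucial point is that $\mathcal{W}_{1}$ acts on $\mathcal{E}_{1}=\mathrm{span}\{e_{m}\}_{m\leq -1}$, so every intermediate summation index is automatically constrained by $i_{k}\leq -1$; this truncation is precisely what encodes the geometric restriction that paths never rise above the line $y=-1$. Writing
\[
(W_{1}^{n})_{i,j}=\sum_{i_{1},\ldots,i_{n-1}\leq -1}(W_{1})_{i_{0},i_{1}}(W_{1})_{i_{1},i_{2}}\cdots(W_{1})_{i_{n-1},i_{n}},
\]
each factor $(W_{1})_{i_{k},i_{k+1}}=a_{\min(i_{k},i_{k+1})}^{(i_{k+1}-i_{k})}$ is nonzero only when $-p\leq i_{k+1}-i_{k}\leq q$, and in that case it equals the weight of the step $(k,i_{k})\rightarrow(k+1,i_{k+1})$, by the same elementary computation already carried out in the proof of Theorem \ref{theorem2}.

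Consequently, each nonzero summand is the weight of a lattice path of length $n$ from $(0,i)$ to $(n,j)$ all of whose vertices have height $\leq -1$, i.e.\ a path in $\widehat{\mathcal{D}}_{[n,i,j]}$, and conversely every such path arises from a unique admissible choice of $i_{1},\ldots,i_{n-1}$. This bijection yields $(W_{1}^{n})_{i,j}=\sum_{\gamma\in\widehat{\mathcal{D}}_{[n,i,j]}}w(\gamma)=V_{[n,i,j]}$, and summing against $z^{-n-1}$ gives $\chi_{i,j}(z)=V_{i,j}(z)$. I do not expect a genuine obstacle here: the entire content is the recognition that passing from the full two-sided matrix $W$ to its block $W_{1}$ indexed by $i,j\leq -1$ is the algebraic counterpart of imposing the constraint $y\leq -1$ on paths; once this is noted, the coefficient-matching is the identical verification performed in Theorem \ref{theorem2}.
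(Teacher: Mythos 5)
Your proof is correct and is essentially the proof the paper intends: the paper itself omits the details, stating only that the proposition ``can be proven in the same manner as we proved the first identity \eqref{6-26-22-1} in Theorem \ref{theorem2},'' and your argument fills in exactly that verification (expansion of $(W_{1}^{n})_{i,j}$, the band condition $-p\leq i_{k+1}-i_{k}\leq q$, and the key observation that restricting the summation indices to $i_{k}\leq -1$ is the algebraic counterpart of the constraint $y\leq -1$ on paths). The only cosmetic quibble is that the closest model in the paper is the proof of \eqref{6-26-22-1} (where index truncation encodes a height constraint) rather than \eqref{6-26-22-3}, but the computation is the same in either case.
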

This proposition can be proven in the same manner as we proved the first identity \eqref{6-26-22-1} in Theorem \ref{theorem2}. We will leave the details to the reader.  
\noindent
By applying this proposition and taking into account \eqref{8-31-22-101}  and the formula $h_{i,j}=a_{\min (i,j)}^{(j-i)}$ for $0\le i \le p-1$ and $0 \le j \le q-1$, we can now express the entries of the banded matrix $K$ defined in \eqref{8-31-22-101} in the following manner:
\begin{equation}\label{6-12-23-31}
\begin{aligned}
K_{i,j} &=a_{\min (i,j)}^{(j-i)}+\sum ^{p-i}_{l=1}\sum^{q-j}_{m=1}a^{(-(l+i))}_{-l}a^{(m+j)}_{-m}V_{-l,-m}(z), \qquad  0\le i \le p-1,\quad  0 \le j \le
 q-1, \\
K_{i,j} &=h_{i,j}, \qquad \mbox{otherwise},
\end{aligned}
\end{equation}
where $h_{i,j}$ is the $(i,j)$-entry of $H$.

\par

\begin{figure}
\begin{center}
\begin{tikzpicture}[scale=0.6]
\draw[line width=1.5pt]  (-3.03,0) -- (15.5,0);
\draw[line width=1.5pt]  (-3,0) -- (-3,4.5);
\draw[line width=0.7pt] (-3,0) -- (-2,1) -- (-1,3) -- (0,1) -- (1,4) -- (2,1) -- (3,0) -- (4,2) -- (5,3) -- (6,3) -- (7,4) -- (8,0) -- (9,2) -- (10,1) -- (11,0) -- (12,3) -- (13,1) -- (14,2) -- (15,3);
\draw [line width=0.5] (-2,0) -- (-2,-0.12);
\draw [line width=0.5] (-1,0) -- (-1,-0.12);
\draw [line width=0.5] (0,0) -- (0,-0.12);
\draw [line width=0.5] (1,0) -- (1,-0.12);
\draw [line width=0.5] (2,0) -- (2,-0.12);
\draw [line width=0.5] (3,0) -- (3,-0.12);
\draw [line width=0.5] (4,0) -- (4,-0.12);
\draw [line width=0.5] (5,0) -- (5,-0.12);
\draw [line width=0.5] (6,0) -- (6,-0.12);
\draw [line width=0.5] (7,0) -- (7,-0.12);
\draw [line width=0.5] (8,0) -- (8,-0.12);
\draw [line width=0.5] (9,0) -- (9,-0.12);
\draw [line width=0.5] (10,0) -- (10,-0.12);
\draw [line width=0.5] (11,0) -- (11,-0.12);
\draw [line width=0.5] (12,0) -- (12,-0.12);
\draw [line width=0.5] (13,0) -- (13,-0.12);
\draw [line width=0.5] (14,0) -- (14,-0.12);
\draw [line width=0.5] (15,0) -- (15,-0.12);
\draw [line width=0.5] (-3,0) -- (-3.12,0);
\draw [line width=0.5] (-3,1) -- (-3.12,1);
\draw [line width=0.5] (-3,2) -- (-3.12,2);
\draw [line width=0.5] (-3,3) -- (-3.12,3);
\draw [line width=0.5] (-3,4) -- (-3.12,4);
\draw [dotted] (-3,1) -- (15.5,1);
\draw [dotted] (-3,2) -- (15.5,2);
\draw [dotted] (-3,3) -- (15.5,3);
\draw [dotted] (-3,4) -- (15.5,4);
\draw [dotted] (-2,0) -- (-2,4.5);
\draw [dotted] (-1,0) -- (-1,4.5);
\draw [dotted] (0,0) -- (0,4.5);
\draw [dotted] (1,0) -- (1,4.5);
\draw [dotted] (2,0) -- (2,4.5);
\draw [dotted] (3,0) -- (3,4.5);
\draw [dotted] (4,0) -- (4,4.5);
\draw [dotted] (5,0) -- (5,4.5);
\draw [dotted] (6,0) -- (6,4.5);
\draw [dotted] (7,0) -- (7,4.5);
\draw [dotted] (8,0) -- (8,4.5);
\draw [dotted] (9,0) -- (9,4.5);
\draw [dotted] (10,0) -- (10,4.5);
\draw [dotted] (11,0) -- (11,4.5);
\draw [dotted] (12,0) -- (12,4.5);
\draw [dotted] (13,0) -- (13,4.5);
\draw [dotted] (14,0) -- (14,4.5);
\draw [dotted] (15,0) -- (15,4.5);
\draw (-2,-0.1) node[below, scale=0.8]{$1$};
\draw (-1,-0.1) node[below, scale=0.8]{$2$};
\draw (0,-0.1) node[below, scale=0.8]{$3$};
\draw (1,-0.1) node[below, scale=0.8]{$4$};
\draw (2,-0.1) node[below, scale=0.8]{$5$};
\draw (3,-0.1) node[below, scale=0.8]{$6$};
\draw (4,-0.1) node[below, scale=0.8]{$7$};
\draw (5,-0.1) node[below, scale=0.8]{$8$};
\draw (6,-0.1) node[below, scale=0.8]{$9$};
\draw (7,-0.1) node[below, scale=0.8]{$10$};
\draw (8,-0.1) node[below, scale=0.8]{$11$};
\draw (9,-0.1) node[below, scale=0.8]{$12$};
\draw (10,-0.1) node[below, scale=0.8]{$13$};
\draw (11,-0.1) node[below, scale=0.8]{$14$};
\draw (12,-0.1) node[below, scale=0.8]{$15$};
\draw (13,-0.1) node[below, scale=0.8]{$16$};
\draw (14,-0.1) node[below, scale=0.8]{$17$};
\draw (15,-0.1) node[below, scale=0.8]{$18$};
\draw (-3.1,0) node[left, scale=0.8]{$0$};
\draw (-3.1,1) node[left, scale=0.8]{$1$};
\draw (-3.1,2) node[left, scale=0.8]{$2$};
\draw (-3.1,3) node[left, scale=0.8]{$3$};
\draw (-3.1,4) node[left, scale=0.8]{$4$};
\draw[line width=1.5pt]  (-3.03,0) -- (15.5,0);
\draw[line width=1.5pt]  (-3,0) -- (-3,-6.5);
\draw[line width=0.7pt] (-3,-4) -- (-2,-3) -- (-1,-2) -- (0,-4) -- (1,-1) -- (2,-2) -- (3,-3) -- (4,-1) -- (5,-5) -- (6,-4) -- (7,-4) -- (8,-3) -- (9,-1) -- (10,-2) -- (11,-5) -- (12,-2) -- (13,-4) -- (14,-2) -- (15,-1);
\draw [line width=0.5] (-3,-1) -- (-3.12,-1);
\draw [line width=0.5] (-3,-2) -- (-3.12,-2);
\draw [line width=0.5] (-3,-3) -- (-3.12,-3);
\draw [line width=0.5] (-3,-4) -- (-3.12,-4);
\draw [line width=0.5] (-3,-5) -- (-3.12,-5);
\draw [line width=0.5] (-3,-6) -- (-3.12,-6);
\draw [dotted] (-2,0) -- (-2,-6.5);
\draw [dotted] (-1,0) -- (-1,-6.5);
\draw [dotted] (0,0) -- (0,-6.5);
\draw [dotted] (1,0) -- (1,-6.5);
\draw [dotted] (2,0) -- (2,-6.5);
\draw [dotted] (3,0) -- (3,-6.5);
\draw [dotted] (4,0) -- (4,-6.5);
\draw [dotted] (5,0) -- (5,-6.5);
\draw [dotted] (6,0) -- (6,-6.5);
\draw [dotted] (7,0) -- (7,-6.5);
\draw [dotted] (8,0) -- (8,-6.5);
\draw [dotted] (9,0) -- (9,-6.5);
\draw [dotted] (10,0) -- (10,-6.5);
\draw [dotted] (11,0) -- (11,-6.5);
\draw [dotted] (12,0) -- (12,-6.5);
\draw [dotted] (13,0) -- (13,-6.5);
\draw [dotted] (14,0) -- (14,-6.5);
\draw [dotted] (15,0) -- (15,-6.5);
\draw [dotted] (-3,-1) -- (15.5,-1);
\draw [dotted] (-3,-2) -- (15.5,-2);
\draw [dotted] (-3,-3) -- (15.5,-3);
\draw [dotted] (-3,-4) -- (15.5,-4);
\draw [dotted] (-3,-5) -- (15.5,-5);
\draw [dotted] (-3,-6) -- (15.5,-6);
\draw (-3.1,-1) node[left, scale=0.8]{$-1$};
\draw (-3.1,-2) node[left, scale=0.8]{$-2$};
\draw (-3.1,-3) node[left, scale=0.8]{$-3$};
\draw (-3.1,-4) node[left, scale=0.8]{$-4$};
\draw (-3.1,-5) node[left, scale=0.8]{$-5$};
\draw (-3.1,-6) node[left, scale=0.8]{$-6$};
\end{tikzpicture}
\end{center}
\caption{Above it is shown a path $\gamma$ in the collection $\mathcal{D}_{[18,0,3]}$, and below the reflection $\widehat{\gamma}$ in the collection $\widehat{\mathcal{D}}_{[18,-4,-1]}$.}
\label{Newplot}
\end{figure}

We remark that there is a one-to-one correspondence between paths in the collection $\mathcal{D}_{[n,i,j]}$, $i,j\geq 0$, and paths in the collection $\widehat{\mathcal{D}}_{[n,-(j+1),-(i+1)]}$. The one-to-one correspondence is established by a map $\gamma \mapsto \widehat{\gamma}$ that is defined as follows. Given a path $\gamma \in \mathcal{D}_{[n,i,j]}$, it is first reflected with respect to the real axis. The result is then reflected with respect to the vertical line $x = n/2$ and shifted $1$ unit downwards to obtain the path $\widehat{\gamma} \in \widehat{\mathcal{D}}_{[n,-(j+1), -(i+1)]}$. See an example of this transformation in Fig. \ref{Newplot}. Note that under this transformation, the image of a step that belongs to one of the sets in \eqref{defsteps} is a step that belongs to the same set. Furthermore, if a step in $\gamma$ has weight $a^{(m)}_{k}, m\ge 0,$ then its image in $\widehat{\gamma}$ will have weight $a^{(m)}_{-(k+m+1)}$. In the case where a step has a weight $a^{(-m)}_{k}, m\ge 0,$ its image will have weight $a^{(-m)}_{-(k+m+1)}$. Therefore, there exists a one-to-one correspondence 
$$A_{i,j}(z) \leftrightarrow V_{-(j+1),-(i+1)}(z), \quad \quad i,j\geq 0,$$
between the formal series $A_{i,j}(z)$ and $V_{-(j+1),-(i+1)}(z)$, and if we define the following banded matrix 
\begin{equation}\label{matrixE}
E:=\begin{pmatrix}
a_{-1}^{(0)} & \cdots & a_{-(q+1)}^{(q)} & & & \\
\vdots & a_{-2}^{(0)} & \cdots & a_{-(q+2)}^{(q)} & &\\
a_{-(p+1)}^{(-p)} & \vdots & a_{-3}^{(0)} & \cdots & a_{-(q+3)}^{(q)} &\\
 & a_{-(p+2)}^{(-p)} & \vdots & \ddots & & \ddots \\
 &  & a_{-(p+3)}^{(-p)} & & \ddots \\
 & & & \ddots &  
\end{pmatrix},
\end{equation}
then the algebraic relation between the formal series $V_{-(j+1),-(i+1)}(z)$, $i,j\geq 0$, and the matrix $E$ is the same as the relation between $A_{i,j}(z)$ and the matrix $H$. In Section \ref{MCF}, we will use the matrix $E$ to construct a matrix continued fraction expansion (see Proposition \ref{P6-10-23-1}),
for the following matrix: 
\begin{equation*}
\begin{pmatrix}
V_{-1,-1}(z) &\cdots & V_{-p, -1}(z)\\
\vdots &\ddots& \vdots\\
V_{-1, -q}(z) & \cdots& V_{-p, -q}(z)\\
\end{pmatrix}.
\end{equation*}

\section{Matrix continued fractions}\label{MCF}

\subsection{Main results}\label{subsecMCF1}

Consider a $q\times p$ matrix
$$ A=
\begin{pmatrix}
a_{0,0} &\cdots & a_{0, p-1}\\
\vdots &\ddots& \vdots\\
a_{q-1, 0} & \cdots&a_{q-1, p-1}\\
\end{pmatrix}
$$
with entries in any algebraic field and such that $a_{0,0} \neq 0.$ We define the transformation $A \mapsto B=T(A),$ where $B$ is the $q\times p$ matrix
\begin{gather}
B=
\begin{pmatrix}
b_{0,0} &\cdots & b_{0, p-1}\\
\vdots &\ddots& \vdots\\
b_{q-1, 0} & \cdots&b_{q-1, p-1}\\
\end{pmatrix}\notag\\
=\frac{1}{a_{0,0}}
\begin{pmatrix}
a_{1,1}a_{0,0}-a_{0,1}a_{1,0} &\cdots& a_{1, p-1}a_{0,0}-a_{0, p-1}a_{1, 0} & a_{1, 0}\\
\vdots&\ddots&\vdots& \vdots\\
a_{q-1, 1}a_{0,0}-a_{0,1}a_{q-1, 0} &\cdots& a_{q-1, p-1}a_{0,0}-a_{0, p-1}a_{q-1, 0} & a_{q-1, 0}\\
-a_{0,1}& \cdots& -a_{0, p-1} & 1
\end{pmatrix}\label{opinv1}
\end{gather}
with entries
\begin{align}
b_{i,j} &  =(a_{i+1,j+1}a_{0,0}-a_{0,j+1}a_{i+1,0})/a_{0,0},\qquad 0\leq i\leq q-2, \quad 0\leq j\leq p-2,\label{26-4-1}\\
b_{q-1,j} &  =-a_{0,j+1}/a_{0,0}, \qquad 0\leq j\leq p-2,\label{26-4-3}\\
b_{i,p-1} & =a_{i+1, 0}/a_{0,0}, \qquad 0\leq i\leq q-2,  \label{26-4-2}\\
b_{q-1,p-1} &  =1/a_{0,0}.\label{26-4-4}
\end{align}
Observe that $b_{q-1,p-1}\neq 0$. This transformation $T$ has an inverse $B\mapsto A=T^{-1}(B)$ given by
\[
\begin{small}
A=\frac{1}{b_{q-1, p-1}}
\begin{pmatrix}
1& -b_{q-1, 0}& \cdots& -b_{q-1, p-2}\\
b_{0, p-1}& b_{0,0} b_{q-1, p-1}-b_{0, p-1}b_{q-1, 0}& \cdots& b_{0, p-2} b_{q-1, p-1}-b_{0, p-1}b_{q-1, p-2}\\
\vdots&\vdots&\ddots&\vdots\\
b_{q-2, p-1}& b_{q-2, 0}b_{q-1, p-1}-b_{q-2, p-1}b_{q-1, 0}&\cdots&b_{q-2, p-2}b_{q-1, p-1}-b_{q-2,p-1}b_{q-1, p-2}\\
\end{pmatrix}.
\end{small}
\]
We will use the notation
$$
A=\frac{\mathbf{1}}{B}
$$
if $B=T(A).$
\par
In this section we obtain a matrix continued fraction expansion for the $q\times p$ matrix  
\begin{equation}\label{def:Fmatrix}
F(z):=\begin{pmatrix}
\phi_{0,0}(z) &\cdots & \phi_{0, p-1}(z)\\
\vdots &\ddots& \vdots\\
\phi_{q-1, 0}(z) & \cdots&\phi_{q-1, p-1}(z)\\
\end{pmatrix}
\end{equation}
with entries 
$$
F_{i,j}(z)=\phi_{i,j}(z), \qquad 0\leq i\leq q-1, \qquad 0\leq j\leq p-1,
$$
as given in \eqref{7-9-22-2}. By Theorem \ref{theorem2}, we can also represent the matrix $F(z)$ in the form
 \begin{equation}\label{def:Fmatrix1}
F(z)=\begin{pmatrix}
A_{0,0}(z) &\cdots & A_{0, p-1}(z)\\
\vdots &\ddots& \vdots\\
A_{q-1, 0}(z) & \cdots&A_{q-1, p-1}(z)\\
\end{pmatrix}.
\end{equation}
We also consider the $q\times p$ matrix  
\begin{equation}\label{def:F1matrix}
F_1(z):=\begin{pmatrix}
\phi^{(1)}_{0,0}(z) &\cdots & \phi^{(1)}_{0, p-1}(z)\\
\vdots &\ddots& \vdots\\
\phi^{(1)}_{q-1, 0}(z) & \cdots&\phi^{(1)}_{q-1, p-1}(z)\\
\end{pmatrix}
\end{equation}
with entries $\phi^{(1)}_{i,j}(z)$, $0\leq i\leq q-1$, $0\leq j\leq p-1$, defined in \eqref{8-26-22-4}.

The following result follows from Theorem \ref{theorem3} and states that the matrix $F(z)$ can be expressed in terms of the matrix $F_1(z)$ using the transformation $T$. It is important to note that on the right-hand side of \eqref{10-21-22-2}, besides the matrix $F_1(z)$ we only use the entries from the first row and first column of the matrix
$H$.
\begin{theorem}\label{10-21-22-1}
The following identity holds between the matrices defined in \eqref{def:Fmatrix} and \eqref{def:F1matrix}:
\begin{equation}\label{10-21-22-2}
F(z)=\frac{\mathbf{1}}{\alpha_{0}(z)+\alpha_{0}^{+}\,F_{1}(z)\, \alpha_{0}^{-}}
\end{equation}
where the matrices $\alpha_{0}(z)$, $\alpha_{0}^{+}$, $\alpha_{0}^{-}$ are defined in \eqref{6-9-23-10}--\eqref{6-9-23-12} (in the case $k=0$).
\end{theorem}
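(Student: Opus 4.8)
The plan is to unwind the definition of the pseudo-quotient. By the convention introduced just after \eqref{26-4-4}, the asserted identity \eqref{10-21-22-2} is equivalent to the single matrix equation
\[
T(F(z)) = \alpha_{0}(z) + \alpha_{0}^{+}\, F_{1}(z)\, \alpha_{0}^{-},
\]
where $T$ is the transformation \eqref{opinv1}. Before applying $T$ I would first check that it is well defined on $F(z)$: its $(0,0)$-entry $\phi_{0,0}(z)=A_{0,0}(z)$ has the form $z^{-1}+O(z^{-2})$ and is therefore a nonzero, hence invertible, element of the field $\mathbb{C}((z^{-1}))$, so the hypothesis $a_{0,0}\neq 0$ of the definition is satisfied. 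The whole argument then reduces to computing both sides entrywise and comparing four blocks.

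For the right-hand side I would compute the triple product using the explicit shapes of the coefficient matrices in \eqref{6-9-23-10}--\eqref{6-9-23-12}. Since the top $q-1$ rows of $\alpha_{0}^{+}$ form an identity block and only its last row carries the entries $-a_{0}^{(k+1)}$, left multiplication by $\alpha_{0}^{+}$ leaves rows $0,\dots,q-2$ of $F_{1}(z)$ unchanged and replaces the last row by $-\sum_{i=1}^{q}a_{0}^{(i)}\phi^{(1)}_{i-1,j}(z)$. Dually, since only the last column of $\alpha_{0}^{-}$ carries the entries $a_{0}^{(-(k+1))}$, right multiplication leaves columns $0,\dots,p-2$ unchanged and replaces the last column by $\sum_{j=1}^{p}a_{0}^{(-j)}(\,\cdot\,)_{i,j-1}$. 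Adding $\alpha_{0}(z)$ modifies only the $(q-1,p-1)$ entry, by $z-a_{0}^{(0)}$. This yields four explicit blocks: an interior block $\phi^{(1)}_{i,j}$; a last column $\sum_{j=1}^{p}a_{0}^{(-j)}\phi^{(1)}_{i,j-1}$; a last row $-\sum_{i=1}^{q}a_{0}^{(i)}\phi^{(1)}_{i-1,j}$; and a corner $z-a_{0}^{(0)}-\sum_{i=1}^{q}\sum_{j=1}^{p}a_{0}^{(i)}a_{0}^{(-j)}\phi^{(1)}_{i-1,j-1}$.

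For the left-hand side, $T(F(z))$ is read directly off \eqref{26-4-1}--\eqref{26-4-4} with $a_{i,j}=\phi_{i,j}$. I would then match its four blocks against the four blocks above by invoking precisely the four relations of Theorem \ref{theorem3}: the interior entry $\phi_{i+1,j+1}-\phi_{0,j+1}\phi_{i+1,0}/\phi_{0,0}$ equals $\phi^{(1)}_{i,j}$ by \eqref{6-4-4}; the last-column entry $\phi_{i+1,0}/\phi_{0,0}$ equals the column sum by \eqref{6-4-3}; the last-row entry $-\phi_{0,j+1}/\phi_{0,0}$ equals the row sum by \eqref{6-4-2}; and the corner $1/\phi_{0,0}$ equals the corner expression by \eqref{6-4-1}. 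Each of the four cases is thus a one-line substitution.

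The argument is essentially bookkeeping, and the only delicate point is the index arithmetic in the two matrix products: one must keep straight that the nontrivial data sits solely in the last row of $\alpha_{0}^{+}$ and the last column of $\alpha_{0}^{-}$, and that the resulting shifts $i\mapsto i-1$ and $j\mapsto j-1$ are exactly those appearing in the relations of Theorem \ref{theorem3}. Once the four blocks are aligned there is nothing further to prove, so the theorem follows at once from Theorem \ref{theorem3} together with the definition of the transformation $T$.
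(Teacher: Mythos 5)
Your proposal is correct and takes essentially the same route as the paper's own proof: both reduce the identity to the four relations \eqref{6-4-1}--\eqref{6-4-4} of Theorem \ref{theorem3}, matching the four blocks of $T(F)$ (interior, last row, last column, corner) given by \eqref{26-4-1}--\eqref{26-4-4} against the explicit block structure of $\alpha_{0}(z)+\alpha_{0}^{+}F_{1}(z)\alpha_{0}^{-}$. The only cosmetic difference is order of computation: you expand the triple product on the right-hand side first and then compare, whereas the paper computes $T(F)$ entrywise first and leaves the final multiplication check to the reader.
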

\begin{proof}
First note that $\phi_{0,0}(z)\neq 0$, so we can perform the transformation $T(F)$. For $0\leq i\leq q-2$ and $0\leq j\leq p-2$, by \eqref{26-4-1} and \eqref{6-4-4} we have
\begin{equation}\label{10-21-2-30}
T(F)_{i,j}  =(\phi_{i+1,j+1}(z)\phi_{0,0}(z)-\phi_{0,j+1}(z)\phi_{i+1,0}(z))/\phi_{0,0}(z)=\phi^{(1)}_{i,j}(z).  
\end{equation}
Also, for $0\leq j\leq p-2$, using \eqref{26-4-3} and \eqref{6-4-2}, we can write
\begin{equation}\label{10-21-2-32} 
T(F)_{q-1,j} =-\phi_{0,j+1}(z)/\phi_{0,0}(z)=-\sum_{i=1}^{q} a_{0}^{(i)}\phi_{i-1,j}^{(1)}(z).
\end{equation}
For $0\leq i\leq q-2$, with the aid of \eqref{26-4-2} and \eqref{6-4-3}, we obtain
\begin{equation}\label{10-21-2-31}
T(F)_{i,p-1} =\phi_{i+1, 0}(z)/\phi_{0,0}(z)=\sum_{j=1}^{p} a_{0}^{(-j)}\phi_{i,j-1}^{(1)}(z).
\end{equation}
Finally, by  \eqref{26-4-1} and \eqref{6-4-1},
\begin{equation}\label{10-21-2-33}
T(F)_{q-1,p-1} =1/\phi_{0,0}(z)=z-a_{0}^{(0)}-\sum_{i=1}^{q}\sum_{j=1}^{p}a_{0}^{(i)}a_{0}^{(-j)}\,\phi_{i-1,j-1}^{(1)}(z).
\end{equation}
Therefore, by \eqref{10-21-2-30}--\eqref{10-21-2-33} we obtain
\begin{gather*}
T(F)=\\
\begin{small}
=\begin{pmatrix}
&\phi^{(1)}_{0,0}(z)&\cdots&\phi^{(1)}_{0,p-2}(z) &\sum_{j=1}^{p} a_{0}^{(-j)}\phi_{0,j-1}^{(1)}(z) \\[0.2em]
&\cdots&\cdots&\cdots&\cdots\\[0.1em]
&\phi^{(1)}_{q-2,0}(z)& \cdots&\phi^{(1)}_{q-2,p-2}(z) &\sum_{j=1}^{p} a_{0}^{(-j)}\phi_{q-2,j-1}^{(1)}(z)\\[0.5em]
&- \sum_{i=1}^{q} a_{0}^{(i)}\phi_{i-1,0}^{(1)}(z)&\cdots&-\sum_{i=1}^{q} a_{0}^{(i)}\phi_{i-1,p-2}^{(1)}(z) & z-a_{0}^{(0)}-\sum_{i=1}^{q}\sum_{j=1}^{p}a_{0}^{(i)}a_{0}^{(-j)}\,\phi_{i-1,j-1}^{(1)}(z)
\end{pmatrix}.
\end{small}
\end{gather*}
Then, by the properties of matrix multiplication we get
\begin{equation}\label{10-21-22-41}
T(F)=\alpha_{0}(z)+\alpha_{0}^{+}\,F_{1}(z)\,\alpha_{0}^{-}.
\end{equation}
The detailed verification of \eqref{10-21-22-41} is left to the reader.
\end{proof}

With the help of the matrices defined in \eqref{6-9-23-10}--\eqref{6-9-23-12}, we introduce the following transformations for matrices $X$ of size $q\times p$:
\begin{equation}\label{FLTalphas}
\tau_{\alpha,k}(X)=\frac{\mathbf{1}}{\alpha_{k}(z)+\alpha_{k}^{+}\,X\,\alpha_{k}^{-}},\qquad k\geq 0.
\end{equation}
  
As in Section~\ref{sec:lattpathoper}, for any integer $k\geq 1$, let $\mathcal{H}_{k}$ be the operator (possibly unbounded) defined by 
\[
\begin{cases}
\mathcal{H}_{k}\,e_{0}=\sum_{m=0}^{p} a_{k}^{(-m)} e_{m},\\[0.3em]
\mathcal{H}_{k}\, e_{n}=\sum_{m=1}^{n} a_{n-m+k}^{(m)}\,e_{n-m}+\sum_{m=0}^{p} a_{n+k}^{(-m)}\,e_{n+m},\quad 0< n< q,\\[0.3em]
\mathcal{H}_{k}\, e_{n}=\sum_{m=1}^{q} a_{n-m+k}^{(m)}\,e_{n-m}+\sum_{m=0}^{p} a_{n+k}^{(-m)}\,e_{n+m},\quad n\geq q,
\end{cases}
\]
and extended by linearity to $\mathcal{E}_{0}=\mbox{span}\{e_{n}\}_{n=0}^{\infty}$. In the basis $\{e_{n}\}_{n=0}^{\infty}$, the matrix representation of the operator $\mathcal{H}_{k}$  is the banded matrix
\begin{equation}\label{def:Hk}
H^{[k]}=\begin{pmatrix}
a_{k}^{(0)} & \ldots & a_{k}^{(q)} & & & \\
\vdots & a_{k+1}^{(0)} & \ldots & a_{k+1}^{(q)} & & \\
a_{k}^{(-p)} & \vdots & a_{k+2}^{(0)} & \ldots & a_{k+2}^{(q)} & \\
 & a_{k+1}^{(-p)} & \vdots & \ddots & & \ddots \\
 &  & a_{k+2}^{(-p)} & & \ddots & \\
 & & & \ddots & 
 \end{pmatrix}.
\end{equation}
Observe that $H^{[k]}$ is the infinite  matrix obtained by removing the first $k$ rows and the first $k$ columns of the matrix $H$. The system of resolvent functions $\phi_{i,j}^{(k)}(z)$ associated with the operator  $\mathcal{H}_{k}$ is given by the following formulas
\[
\phi_{i,j}^{(k)}(z):=\sum_{n=0}^{\infty}\frac{\langle\mathcal{H}_{k}^{n}\, e_{j},e_{i}\rangle}{z^{n+1}}\qquad i,j \geq 0.
\]

As above in this section, we define the $q\times p$ matrix
\begin{equation}\label{def:Fkmatrix}
F_k(z)=\begin{pmatrix}
\phi^{(k)}_{0,0}(z) &\cdots & \phi^{(k)}_{0, p-1}(z)\\
\vdots&\ddots&\vdots\\
\phi^{(k)}_{q-1, 0}(z) & \cdots&\phi^{(k)}_{q-1, p-1}(z)
\end{pmatrix},\qquad k\geq 1,
\end{equation}
with entries $\phi^{(k)}_{i,j}(z)$, $0\leq i\leq q-1$, $0\leq j\leq p-1$. Observe that $\phi_{0,0}^{(k)}(z)\neq 0$ for all $k\geq 1$.
 
The following is one of the main results of the paper, and states that the matrix $F(z)$ can be expressed as a matrix continued fraction.
\begin{theorem}\label{TH6-8-23-1}
For any $k\geq 1$, the following identity holds between the matrices $F(z)$ and $F_{k}(z)$ defined in \eqref{def:Fmatrix}--\eqref{def:Fmatrix1} and \eqref{def:Fkmatrix}:
\begin{equation}\label{finiteMCFforF}
F(z)=(\tau_{\alpha,0}\circ\tau_{\alpha,1}\circ\cdots\circ\tau_{\alpha,k-1})(F_{k}(z)).
\end{equation}
As a result, we have the following formal expansion
\begin{equation}\label{def:FCFmatrix}
F(z)=\cfrac{{\bf 1}}{\alpha_{0}(z)+\alpha^{+}_0\cfrac{{\bf 1}}{\alpha_{1}(z)+\alpha^{+}_1\cfrac{{\bf 1}}{\alpha_{2}(z)+\alpha^{+}_2\cfrac{{\bf 1}}
{\alpha_{3}(z)+\ddots}\,\alpha^{-}_2}\,\alpha^{-}_1}\,\alpha^{-}_0},
\end{equation}
where the matrix coefficients in \eqref{def:FCFmatrix} are defined in \eqref{6-9-23-10}--\eqref{6-9-23-12}.
\end{theorem}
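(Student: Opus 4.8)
The plan is to establish the finite identity \eqref{finiteMCFforF} by induction on $k$ and then read off \eqref{def:FCFmatrix} as its formal limit. The base case $k=1$ is exactly Theorem \ref{10-21-22-1}, which gives $F(z)=\tau_{\alpha,0}(F_{1}(z))$. The whole induction is powered by a single self-similar recursion,
\begin{equation*}
F_{k}(z)=\tau_{\alpha,k}(F_{k+1}(z)),\qquad k\geq 0,
\end{equation*}
whose $k=0$ instance is Theorem \ref{10-21-22-1} itself (with $F$ in the role of $F_{0}$). Granting this, the inductive step is purely formal: if $F=(\tau_{\alpha,0}\circ\cdots\circ\tau_{\alpha,k-1})(F_{k})$, then substituting $F_{k}=\tau_{\alpha,k}(F_{k+1})$ gives $F=(\tau_{\alpha,0}\circ\cdots\circ\tau_{\alpha,k})(F_{k+1})$, which is \eqref{finiteMCFforF} at level $k+1$.

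The core of the proof is thus the recursion $F_{k}=\tau_{\alpha,k}(F_{k+1})$. The point is that $H^{[k]}$ and $H^{[k+1]}$ stand in precisely the same relation as $H=H^{[0]}$ and $H^{[1]}$: deleting the first row and column of $H^{[k]}$ produces $H^{[k+1]}$, while the zeroth row and column of $H^{[k]}$ read $a_{k}^{(0)},a_{k}^{(1)},\dots,a_{k}^{(q)}$ and $a_{k}^{(0)},a_{k}^{(-1)},\dots,a_{k}^{(-p)}$. Consequently the block partition of $zI-H^{[k]}$ has exactly the shape of the partition of $zI-H$ used in Theorem \ref{theorem3}, with every coefficient $a_{0}^{(\cdot)}$ replaced by $a_{k}^{(\cdot)}$ (cf. \eqref{8-31-22-2B}, \eqref{8-31-22-2C}) and with lower-right block $D=zI-H^{[k+1]}$. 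Since $zI-\mathcal{H}_{k+1}$ is invertible by Lemma \ref{lemmainversion}, the Schur-complement computation of Theorem \ref{theorem3} goes through verbatim under this index shift and yields \eqref{6-4-1}--\eqref{6-4-4} with $(\phi_{i,j},\phi_{i,j}^{(1)},a_{0}^{(\cdot)})$ replaced by $(\phi_{i,j}^{(k)},\phi_{i,j}^{(k+1)},a_{k}^{(\cdot)})$. Feeding these shifted relations into the transformation $T$ exactly as in the proof of Theorem \ref{10-21-22-1} (using $\phi_{0,0}^{(k)}(z)\neq 0$) gives $T(F_{k})=\alpha_{k}(z)+\alpha_{k}^{+}F_{k+1}\alpha_{k}^{-}$, that is, $F_{k}=\tau_{\alpha,k}(F_{k+1})$.

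With \eqref{finiteMCFforF} in hand for all $k$, the formal expansion \eqref{def:FCFmatrix} is the assertion that the convergents $C_{k}(z):=(\tau_{\alpha,0}\circ\cdots\circ\tau_{\alpha,k-1})(0)$ converge entrywise to $F(z)$ in $\mathbb{C}((z^{-1}))$. I would prove this by comparing $C_{k}$ with $F=(\tau_{\alpha,0}\circ\cdots\circ\tau_{\alpha,k-1})(F_{k})$ and tracking the order of agreement through each transformation. A single $\tau_{\alpha,\ell}$ gains two orders on the corner and first row/column: writing $B=\alpha_{\ell}+\alpha_{\ell}^{+}X\alpha_{\ell}^{-}$, every entry of $B$ is $O(z^{-1})$ except the $(q-1,p-1)$ entry, which has exact degree one, so in the explicit formula for $T^{-1}(B)$ the division by $b_{q-1,p-1}$ upgrades an $O(z^{-N})$ perturbation of $X$ into an $O(z^{-N-2})$ perturbation of $\tau_{\alpha,\ell}(X)_{0,0}$, and likewise for the first row and column. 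The interior entries need one extra observation: there the telescoping relation \eqref{6-4-4} gives $\tau_{\alpha,\ell}(X)_{i,j}=X_{i-1,j-1}+(\text{terms built from the boundary of }X)$ for $i,j\geq 1$, transferring the index toward the corner with no immediate gain. But after at most $\min(i,j)\leq\min(p,q)-1$ such shifts the index reaches the boundary, from which point every further transformation gains two orders. Hence over the $k$ compositions all but boundedly many steps contribute, so $F(z)-C_{k}(z)$ vanishes to an order growing linearly in $k$, giving \eqref{def:FCFmatrix}.

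I expect the main obstacle to be exactly this convergence bookkeeping for the interior entries: unlike the corner, where the gain is immediate, the interior entries propagate through \eqref{6-4-4} without gaining until the index shift reaches the boundary, so one must confirm that the shifting terminates after boundedly many steps and that every subsequent transformation genuinely improves the order. By contrast, both the induction for \eqref{finiteMCFforF} and the recursion $F_{k}=\tau_{\alpha,k}(F_{k+1})$ are routine once one notices the scale invariance of the construction under deleting the leading row and column. (Alternatively, convergence can be obtained a posteriori from the explicit approximants $R_{n}(z)$ of Section \ref{12-12-22-1}, whose agreement with $F$ to order growing like $n(1/p+1/q)$ exhibits them as special convergents of \eqref{def:FCFmatrix}.)
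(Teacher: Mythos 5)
Your proof of \eqref{finiteMCFforF} is exactly the paper's: the paper likewise exploits the shift invariance you describe (deleting the first row and column of $H^{[k]}$ produces $H^{[k+1]}$), applies Theorem \ref{10-21-22-1} with $H$ replaced by $H^{[k]}$ to obtain $F_k(z)=\tau_{\alpha,k}(F_{k+1}(z))$ for $k\geq 1$, and iterates this together with \eqref{10-21-22-2}; your induction is the same iteration written out. Where you diverge is in the reading of \eqref{def:FCFmatrix}: by the convention stated in the introduction, the infinite continued fraction is to be understood purely formally, as shorthand for the family of identities \eqref{finiteMCFforF}, so the paper's proof stops at the iteration and no convergence assertion is made or needed. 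Your additional argument --- that the convergents $C_k(z)=(\tau_{\alpha,0}\circ\cdots\circ\tau_{\alpha,k-1})(0)$ agree with $F(z)$ to an order growing linearly in $k$ --- is therefore a genuine strengthening rather than a required step (and is close in spirit to Section \ref{12-12-22-1}, where $R_n(z)$ is exhibited as a special convergent). One detail of that extra bookkeeping is inaccurate, though: once the backward-tracked index reaches row $0$ or column $0$, the next transformation gains only one order (two orders occur only at the corner $(0,0)$), and the dependence then spreads back to \emph{all} entries of the deeper matrix, including interior ones, so the index can re-enter the no-gain diagonal shifting. It is not true that ``every further transformation gains two orders''; the correct statement is that a gain of at least one order recurs at least once every $\min(p,q)$ levels. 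This still gives agreement to an order growing linearly in $k$, so your conclusion survives, but the intermediate claim as written would not.
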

\begin{proof}
If we replace the matrix $H$ by $H^{[k]}$ and apply Theorem~\ref{10-21-22-1}, we immediately get the relation
\begin{equation}\label{10-22-22-2}
F_k(z)=\frac{{\bf 1}}{\alpha_{k}(z)+\alpha^{+}_k\,F_{k+1}(z)\,\alpha^{-}_k}=\tau_{\alpha,k}(F_{k+1}(z)),\qquad k\geq 1.
\end{equation}
Combining \eqref{10-21-22-2} and an iteration of \eqref{10-22-22-2}, we obtain \eqref{finiteMCFforF}. 
\end{proof}

\begin{remark}\label{R6-12-23-1}
It is worth emphasizing the approach to obtain the matrix continued fraction for $F(z)$ directly from the banded matrix $H$ in \eqref{matrixH}. In the expansion \eqref{def:FCFmatrix}, the matrices $\alpha_{k}(z)$, $\alpha^{+}_k$, $\alpha^{-}_k$ defined in \eqref{6-9-23-12}, \eqref{6-9-23-10}, \eqref{6-9-23-11} are constructed by selecting specific entries of $H$. The entries selected are the $(k+1)$-st entry on the main diagonal (for $\alpha_{k}(z)$), the entries from the $(k+1)$-st row located to the right of the main diagonal (for $\alpha_{k}^{+}$), and the entries from the $(k+1)$-st column located below the main diagonal (for $\alpha_{k}^{-}$).
\end{remark}

The next essential result of the paper concerns the construction of a matrix continued fraction for a matrix built of formal Laurent series $W_{i,j}(z)$ associated with the collections $\mathcal{P}_{[n,i,j]}$ of lattice paths. Consider now the $q\times p$ matrix
\begin{equation}\label{6-9-23-21}
G(z)=\begin{pmatrix}
W_{0,0}(z) &\cdots & W_{0, p-1}(z)\\
\vdots &\ddots& \vdots\\
W_{q-1, 0}(z) & \cdots&W_{q-1, p-1}(z)\\
\end{pmatrix},
\end{equation}
with entries defined in \eqref{7-1-22-8}. Observe that according to Theorem \ref{10-3-22-1}, we can also represent $G(z)$ as
\[
G(z)=\begin{pmatrix}
\zeta_{0,0}(z) &\cdots & \zeta_{0, p-1}(z)\\
\vdots &\ddots& \vdots\\
\zeta_{q-1, 0}(z) & \cdots & \zeta_{q-1, p-1}(z)\\
\end{pmatrix}
\]
with entries defined by \eqref{10-05-22-11}.

Let us introduce the matrices that will be used to construct the matrix continued fraction for $G(z)$. The reader should keep in mind that these matrices are defined as the $\alpha$ matrices in \eqref{6-9-23-10}--\eqref{6-9-23-12} but using the entries of the matrix $K$ in \eqref{6-12-23-31} instead of the matrix $H$. First we define the matrices obtained from the diagonal entries of $K$. For any integer $k \geq \min (p,q)$, the $q\times p$ matrix $\beta_{k}(z)$ is defined as follows: 
\begin{equation}\label{6-9-23-40}
\beta_{k}(z)=\alpha_{k}(z)=\begin{pmatrix}
0 &\cdots & 0 & 0\\
\vdots &\ddots& \vdots & \vdots\\
0 & \cdots & 0 & 0 \\
0 & \cdots & 0 & z-a^{(0)}_k
\end{pmatrix}.
\end{equation}
For any integer $0 \leq k \leq \min (p,q)-1$, the $q\times p$ matrix $\beta_{k}(z)$ is 
\begin{equation}\label{6-9-23-41}
\beta_{k}(z)=\begin{pmatrix}
0 &\cdots & 0 & 0\\
\vdots &\ddots& \vdots & \vdots\\
0 & \cdots & 0 & 0 \\
0 & \cdots & 0 & z-d^{(0)}_k
\end{pmatrix}
\end{equation}
where the coefficients $d^{(0)}_{k}$ in the above matrix are defined in the following way:
\begin{equation}\label{6-9-23-51}
d^{(0)}_{k}=a^{(0)}_{k}+\sum ^{p-k}_{l=1}\sum^{q-k}_{m=1}a^{(-(l+k))}_{-l}a^{(m+k)}_{-m}V_{-l,-m}(z).
\end{equation}
Now we define the matrices obtained from the off-diagonal entries of $K$. For $ k \geq \min (p,q)$, the $q\times q$ matrix $\beta^{+}_k$ is defined as
\begin{equation}\label{6-9-23-42}
\beta^{+}_k=\alpha^{+}_k=\begin{pmatrix}
1 & 0 & \cdots & 0 & 0\\
0 & 1 & \cdots & 0 & 0\\
\vdots & \vdots & \ddots & \vdots & \vdots \\
0 & 0 & \cdots & 1 & 0\\
-a^{(1)}_k & -a_{k}^{(2)} & \cdots & -a^{(q-1)}_k & -a_{k}^{(q)}
\end{pmatrix}.
\end{equation}
For $0\leq k \leq \min (p,q)-1$, the $q\times q$ matrix $\beta^{+}_k$ is given by
\begin{equation}\label{6-9-23-43}
\beta^{+}_k=\begin{pmatrix}
1 & 0 & \cdots & 0 & 0\\
0 & 1 & \cdots & 0 & 0\\
\vdots & \vdots & \ddots & \vdots & \vdots \\
0 & 0 & \cdots & 1 & 0\\
-d^{(1)}_k & -d_{k}^{(2)} & \cdots & -d^{(q-1)}_k & -d_{k}^{(q)}
\end{pmatrix}
\end{equation}
where the coefficients $d^{(i)}_{k}$ in the above matrix are defined in the following way:
\begin{equation}\label{6-9-23-52}
\begin{cases}
d^{(i)}_{k}=a^{(i)}_{k}+\sum ^{p-k}_{l=1}\sum^{q-(k+i)}_{m=1}a^{(-(l+k))}_{-l}a^{(m+k+i)}_{-m}V_{-l,-m}(z), & 
1\leq i\leq q-k-1, \\[0.3em]
d_{k}^{(i)} =a^{(i)}_{k}, & q-k\leq i\leq q.\\[0.3em]
\end{cases}
\end{equation}
Additionally, for $ k \geq \min (p,q)$, the $p\times p$ matrix $\beta^{-}_k$  is 
\begin{equation}\label{6-9-23-44}
\beta^{-}_k=\alpha^{-}_k=\begin{pmatrix}
1 & 0 & \cdots & 0 & a_{k}^{(-1)}\\[0.4em]
0 & 1 & \cdots & 0 & a_{k}^{(-2)}\\
\vdots & \vdots & \ddots & \vdots & \vdots \\
0 & 0 & \cdots & 1 & a_{k}^{(-p+1)}\\[0.4em]
0 & 0 & \cdots & 0 & a_{k}^{(-p)}
\end{pmatrix}. 
\end{equation}
For $0\leq k \leq \min (p,q)-1$, the $p\times p$ matrix $\beta^{-}_k$ is given by:
\begin{equation}\label{6-9-23-45}
\beta^{-}_k=\begin{pmatrix}
1 & 0 & \cdots & 0 & d_{k}^{(-1)}\\[0.4em]
0 & 1 & \cdots & 0 & d_{k}^{(-2)}\\
\vdots & \vdots & \ddots & \vdots & \vdots \\
0 & 0 & \cdots & 1 & d_{k}^{(-p+1)}\\[0.4em]
0 & 0 & \cdots & 0 & d_{k}^{(-p)}
\end{pmatrix}.
\end{equation}
The coefficients $d^{(-j)}_{k}$ in the above matrix are defined according to the following formulas
\begin{equation}\label{6-9-23-53}
\begin{cases}
d^{(-j)}_{k}=a^{(-j)}_{k}+\sum ^{p-(k+j)}_{l=1}\sum^{q-k}_{m=1}a^{(-(l+k+j))}_{-l}a^{(m+k)}_{-m}V_{-l,-m}(z), & 
1\leq j\leq p-k-1, \\[0.3em]
d_{k}^{(-j)} =a^{(-j)}_{k}, & p-k\leq j\leq p.\\[0.3em]
\end{cases}
\end{equation}
The transformations that will be involved in the matrix continued fraction for $G(z)$ are defined by
\[
\tau_{\beta,k}(X)=\frac{\mathbf{1}}{\beta_{k}(z)+\beta_{k}^{+}\,X\,\beta_{k}^{-}},\qquad k\geq 0,
\]
for matrices $X$ of size $q\times p$.

We need now to introduce some series associated with the banded matrix $K=(K_{i,j})_{i,j=0}^{\infty}$ given in \eqref{6-12-23-31}. 
We denote by $K_{r}$, $r\geq 1$, the submatrix of $K$ obtained by deleting the first $r$ rows and the first $r$ columns of $K$. Also set $K_{0}=K$. Note that in particular we have $K_{r}=H^{[r]}$ for all $r\geq \min(p,q)$, where $H^{[r]}$ is defined in \eqref{def:Hk}.
It is evident that the matrix $K_{r}=((K_{r})_{i,j})_{i,j\geq 0}$ is the matrix representation of a formal Laurent series $\mathcal{K}_{r}(z)\in\mathcal{L}_{0}((z^{-1}))$ (see properties $1)$--$2)$ in Section~\ref{sec:rfH}). Additionally, the series $zI-\mathcal{K}_{r}(z)$ is invertible in $\mathcal{L}_{0}((z^{-1}))$ according to Lemma~\ref{lemmainversion}. Consequently, the matrix $zI-K_{r}$ is invertible, and we define the scalar series $\zeta^{(r)}_{i,j}(z)$ as the $(i,j)$-entry of the matrix $(zI-K_{r})^{-1}$. In other words, we can express this as:
\begin{equation}\label{6-29-23-1}
(zI-K_{r})^{-1}=(\zeta^{(r)}_{i,j}(z))_{i,j\geq 0}.
\end{equation}
We also define the matrices
\begin{equation}\label{def:Grmatrix}
G_{r}(z):=\begin{pmatrix}
\zeta^{(r)}_{0,0}(z) &\cdots & \zeta^{(r)}_{0, p-1}(z)\\
\vdots&\ddots&\vdots\\
\zeta^{(r)}_{q-1,0}(z) & \cdots&\zeta^{(r)}_{q-1, p-1}(z)
\end{pmatrix},\qquad r\geq 1.
\end{equation}
Furthermore, we define
\begin{equation}\label{idG0G}
G_{0}(z)=G(z).
\end{equation}
The result presented below, which constitutes one of the main contributions of the paper, establishes that the matrix $G(z)$ can be expressed as a matrix continued fraction.

\begin{theorem}\label{TH6-9-23-1}
For any $k\geq 1$, the following identity holds between the matrices $G(z)$ and $G_{k}(z)$ defined in \eqref{6-9-23-21} and \eqref{def:Grmatrix}: 
\begin{equation}\label{relGGk}
G(z)=(\tau_{\beta,0}\circ\tau_{\beta,1}\circ\cdots\circ\tau_{\beta,k-1})(G_{k}(z)).
\end{equation}
Thus, we have the formal expansion
\begin{equation}\label{6-3-23-1}
G(z)=\cfrac{{\bf 1}}{\beta_{0}(z)+\beta^{+}_0\cfrac{{\bf 1}}{\beta_{1}(z)+\beta^{+}_1\cfrac{{\bf 1}}{\beta_{2}(z)+\beta^{+}_2\cfrac{{\bf 1}}
{\beta_{3}(z)+\ddots}\,\beta^{-}_2}\,\beta^{-}_1}\,\beta^{-}_0}, 
\end{equation}
where the matrix coefficients in \eqref{6-3-23-1} are defined in 
\eqref{6-9-23-40}--\eqref{6-9-23-53}.
\end{theorem}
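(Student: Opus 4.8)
The plan is to mirror the proof of Theorem~\ref{TH6-8-23-1}, replacing the banded matrix $H$ throughout by the matrix $K$ of \eqref{6-12-23-31}. By Theorem~\ref{10-3-22-1} the matrix $G(z)$ is precisely the top-left $q\times p$ block of the resolvent $(zI-K)^{-1}$, and more generally each $G_r(z)$ in \eqref{def:Grmatrix} is the top-left block of $(zI-K_r)^{-1}$ by \eqref{6-29-23-1}. Since the coefficient matrices $\beta_k(z),\beta_k^{+},\beta_k^{-}$ are built from the entries of $K$ in exactly the same way that $\alpha_k(z),\alpha_k^{+},\alpha_k^{-}$ are built from $H$, the natural strategy is to establish the one-step identity
\[
G_{r}(z)=\tau_{\beta,r}(G_{r+1}(z))=\frac{\mathbf{1}}{\beta_{r}(z)+\beta_{r}^{+}\,G_{r+1}(z)\,\beta_{r}^{-}},\qquad r\geq 0,
\]
and then to obtain \eqref{relGGk} by composing these identities, with \eqref{6-3-23-1} as the resulting formal limit.

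To prove the one-step identity I would repeat the block-partitioning argument of Theorem~\ref{theorem3}, together with its matrix reformulation in Theorem~\ref{10-21-22-1}, now applied to $zI-K_r$ in place of $zI-H$. Partition $zI-K_r$ with the $1\times 1$ block $A=(z-(K_r)_{0,0})$ in the top-left corner, the first row to the right of the diagonal as $B$, the first column below the diagonal as $C$, and $D=zI-K_{r+1}$. The block $D$ is invertible by Lemma~\ref{lemmainversion}, since $K_{r+1}$ represents a series in $\mathcal{L}_{0}((z^{-1}))$, and its resolvent block is exactly $G_{r+1}(z)$ by \eqref{6-29-23-1}. The same Schur-complement computation as in \eqref{invSchur}--\eqref{invblock} then yields the analogues of \eqref{6-4-1}--\eqref{6-4-4}, in which every occurrence of $a_{0}^{(i)}$, $a_{0}^{(-j)}$, $a_{0}^{(0)}$, and $\phi^{(1)}_{i-1,j-1}$ is replaced by the corresponding first-row, first-column, and diagonal entries of $K_r$ and by $\zeta^{(r+1)}_{i-1,j-1}$. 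Substituting these relations into the transformation $T$ of \eqref{opinv1}--\eqref{26-4-4}, exactly as in the proof of Theorem~\ref{10-21-22-1}, gives $T(G_{r})=\beta_{r}(z)+\beta_{r}^{+}\,G_{r+1}(z)\,\beta_{r}^{-}$, which is the desired identity.

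The key bookkeeping step, and the main obstacle, is to verify that the first row, first column, and diagonal entry of $K_r$ are precisely the coefficients $d_{r}^{(i)}$, $d_{r}^{(-j)}$, and $d_{r}^{(0)}$ appearing in \eqref{6-9-23-51}--\eqref{6-9-23-53}, so that the matrices produced by the Schur-complement argument are literally $\beta_{r}(z),\beta_{r}^{+},\beta_{r}^{-}$. This amounts to comparing the entry formulas \eqref{6-12-23-31} for $K$, shifted by $r$ since $K_r$ deletes the first $r$ rows and columns, against the definitions of the $d$-coefficients, where one must track the summation ranges carefully. In particular I would check the boundary behaviour: for $r\geq\min(p,q)$ the double sums in \eqref{6-9-23-51}--\eqref{6-9-23-53} are empty, so $K_r=H^{[r]}$ and $\beta_{r}(z),\beta_{r}^{+},\beta_{r}^{-}$ collapse to $\alpha_{r}(z),\alpha_{r}^{+},\alpha_{r}^{-}$, consistent with \eqref{6-9-23-40}, \eqref{6-9-23-42}, \eqref{6-9-23-44}; while for $0\le r<\min(p,q)$ the correction sums involving the series $V_{-l,-m}(z)$ align exactly with those in the definitions of $d_r^{(0)}$, $d_r^{(i)}$, and $d_r^{(-j)}$.

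Finally, once the one-step identity is available, I would fix $k\geq 1$ and compose it,
\[
G(z)=G_{0}(z)=\tau_{\beta,0}(G_{1}(z))=\tau_{\beta,0}(\tau_{\beta,1}(G_{2}(z)))=\cdots=(\tau_{\beta,0}\circ\cdots\circ\tau_{\beta,k-1})(G_{k}(z)),
\]
using \eqref{idG0G} for the first equality; this is \eqref{relGGk}. The formal expansion \eqref{6-3-23-1} is then the $k\to\infty$ formal limit of this composition, the nested pseudo-quotients being interpreted through the transformations $\tau_{\beta,k}$ just as for \eqref{def:FCFmatrix}. Throughout, the transformation $T$ is applicable because the top-left resolvent entry $\zeta^{(r)}_{0,0}(z)=z^{-1}+O(z^{-2})$ is a nonzero element of $\mathbb{C}((z^{-1}))$.
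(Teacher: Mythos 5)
Your proposal is correct and follows essentially the same route as the paper's own proof: the paper likewise partitions $zI-K_r$ with the $1\times1$ block $A=(z-d_r^{(0)})$, first row $B$, first column $C$, and $D=zI-K_{r+1}$, invokes the Schur-complement identity and the arguments of Theorems \ref{theorem3} and \ref{10-21-22-1} to obtain the analogues of \eqref{6-4-1}--\eqref{6-4-4} with $d_r^{(i)}=(K_r)_{0,i}$, $d_r^{(-j)}=(K_r)_{j,0}$, derives the one-step identity $G_r(z)=\tau_{\beta,r}(G_{r+1}(z))$, and composes using \eqref{idG0G}. Your explicit attention to the bookkeeping identification of the $d_r$-coefficients with entries of $K_r$ and to the nonvanishing of $\zeta^{(r)}_{0,0}(z)$ matches (and slightly elaborates) details the paper treats more tersely.
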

\begin{proof}
We claim that for any $r \geq 0$, the following relations hold between the series defined  in \eqref{10-05-22-11} and \eqref{6-29-23-1}: 
\begin{align}
\zeta_{0,0}^{(r)}(z) & =\frac{1}{z-d_{r}^{(0)}-\sum_{i=1}^{q}\sum_{j=1}^{p}d_{r}^{(i)}d_{r}^{(-j)}\,\zeta_{i-1,j-1}^{(r+1)}(z)},
\label{V6-4-1}\\
\zeta_{0,j}^{(r)}(z) & =\zeta_{0,0}^{(r)}(z)\sum_{i=1}^{q} d_{r}^{(i)}\zeta_{i-1,j-1}^{(r+1)}(z), \qquad j\geq 1,\label{V6-4-2}\\
\zeta_{i,0}^{(r)}(z) & =\zeta_{0,0}^{(r)}(z)\sum_{j=1}^{p} d_{r}^{(-j)}\zeta_{i-1,j-1}^{(r+1)}(z),\qquad i\geq 1,\label{V6-4-3}\\
\zeta_{i,j}^{(r)}(z) & =\frac{\zeta_{i,0}^{(r)}(z)\,\zeta_{0,j}^{(r)}(z)}{\zeta_{0,0}^{(r)}(z)}+\zeta^{(r+1)}_{i-1,j-1}(z),\qquad i,j\geq 1,\label{V6-4-4}
\end{align}
where for $r\geq \min(p,q)$ we use the definition $d_{r}^{(i)}=a_{r}^{(i)}$, $-p\leq i\leq q$, and for $0\leq r\leq \min(p,q)-1$, the coefficients $d_{r}^{(i)}$, $-p\leq i\leq q$, are defined as in \eqref{6-9-23-51}, \eqref{6-9-23-52}, and \eqref{6-9-23-53} (with $k$ replaced by $r$ in those formulas). Equivalently, for all $r\geq 0$ we have $d_{r}^{(i)}=(K_{r})_{0,i}$ for $0\leq i\leq q$, and $d_{r}^{(-j)}=(K_{r})_{j,0}$ for $1\leq j\leq p$, i.e.,
\[
K_{r}=\begin{pmatrix}
d_{r}^{(0)} & \cdots & d_{r}^{(q)} & & & \\
\vdots & d_{r+1}^{(0)} & \cdots & d_{r+1}^{(q)} & &\\
d_{r}^{(-p)} & \vdots & d_{r+2}^{(0)} & \cdots & d_{r+2}^{(q)} &\\
 & d_{r+1}^{(-p)} & \vdots & \ddots & & \ddots \\
 &  & d_{r+2}^{(-p)} & & \ddots \\
 & & & \ddots &  
\end{pmatrix}.
\]

The strategy we employ in proving the formulas \eqref{V6-4-1}--\eqref{V6-4-4} is the same as the one used in the proof of Theorem \ref{theorem3}, with the substitution of the matrix $H$ by the matrix $K_r$. Initially, we partition the matrix $zI-K_r$ into blocks as follows:
\[
zI-K_r=\begin{pmatrix}
A & B \\
C & D 
\end{pmatrix},
\]
where $A$ is the $1\times1$ matrix
\[
A=\begin{pmatrix}
z-d_r^{(0)}
\end{pmatrix}.
\]
So the block $B$ is the infinite row vector   
\[
B=\begin{pmatrix}
-d_{r}^{(1)} & \cdots & -d_{r}^{(q)} & 0 & 0 & 0& \cdots\\
\end{pmatrix}
\]
with entries
\[
\begin{cases}
b_{j}=-d_{r}^{(j)}, & 1\leq j\leq q,\\
b_{j}=0, & \mbox{otherwise}.
\end{cases}
\]
$C$ is the infinite column vector
\[
C=\begin{pmatrix}
-d_{r}^{(-1)}\,\\
\vdots\,\\
-d_{r}^{(-p)}\,\\
 0\,\\
0\,\\
0\,\\
\vdots\\
\end{pmatrix}
\]
with entries
\[
\begin{cases}
c_{i}=-d_{r}^{(-i)}, & 1\leq i\leq p,\\
c_{i}=0, & \mbox{otherwise}.
\end{cases}
\]
The block $D=zI-{K}_{r+1}$ is the matrix
\[
\begin{pmatrix}
z-d_{r+1}^{(0)} & \cdots & -d_{r+1}^{(q)} & & & \\
\vdots & z-d_{r+2}^{(0)} & \cdots & -d_{r+2}^{(q)} & &\\
-d_{r+1}^{(-p)} & \vdots & z-d_{r+3}^{(0)} & \cdots & -d_{r+3}^{(q)} &\\
 & -d_{r+2}^{(-p)} & \vdots & \ddots & & \ddots \\
 &  & -d_{r+3}^{(-p)} & & \ddots \\
 & & & \ddots &  
\end{pmatrix}.
\]
We note that the matrix $D$ is invertible and 
\[
D^{-1}=(\zeta_{i-1,j-1}^{(r+1)}(z))_{i,j=1}^{\infty}.
\]
The key step in the proof of the formulas \eqref{V6-4-1}--\eqref{V6-4-4} is the application of the following identity:
\begin{gather}
(zI-K_{r})^{-1}=
\begin{pmatrix}
A & B \\
C & D 
\end{pmatrix}^{-1}
=\begin{pmatrix}
(A-BD^{-1}C)^{-1}& -(A-BD^{-1}C)^{-1}BD^{-1}\\[0.1em]
-D^{-1}C(A-BD^{-1}C)^{-1}& D^{-1}C(A-BD^{-1}C)^{-1}BD^{-1}+D^{-1}
\end{pmatrix}.\label{6-30-23-1}
\end{gather}
A careful analysis of the proof of Theorem \ref{theorem3}  reveals that the identity \eqref{6-30-23-1} is applicable in this case. By employing the same arguments as in Theorem \ref{theorem3}, we can establish the validity of the formulas \eqref{V6-4-1}--\eqref{V6-4-4}.
Therefore, from \eqref{V6-4-1}--\eqref{V6-4-4} we can deduce that
\[
G_{r}(z)=\frac{\mathbf{1}}{\beta_{r}(z)+\beta_{r}^{+}\,G_{r+1}(z)\,\beta_{r}^{-}},\qquad\mbox{for all}\,\,r\geq 0.
\]
These relations, along with \eqref{idG0G}, imply \eqref{relGGk}.
\end{proof}

\subsection{Matrix continued fraction associated with the collections of paths $\widehat{\mathcal{D}}_{[n,i,j]}$}

To provide a more comprehensive exposition of the applicability of the algorithm to construct matrix continued fractions described in Remark~\ref{R6-12-23-1}, we present an additional result. We show that a certain matrix with entries given by the series $V_{i,j}(z)$, $i,j\leq -1$, defined in \eqref{def:Vijseries} and associated with the collections
 $\widehat{\mathcal{D}}_{[n,i,j]}$, can be represented as a matrix continued fraction. As described in 
Section \ref{Wmatrix}, 
the collection $\widehat{\mathcal{D}}_{[n,i,j]}$, $i,j\leq -1$, consists of those paths in $\mathcal{P}_{[n,i,j]}$ constrained to remain below or touch the line $y=1$. The matrix in question is
\begin{equation}\label{U6-10-23-1}
V(z):=\begin{pmatrix}
V_{-1,-1}(z) &\cdots & V_{-p, -1}(z)\\
\vdots &\ddots& \vdots\\
V_{-1, -q}(z) & \cdots& V_{-p, -q}(z)\\
\end{pmatrix}.
\end{equation}
To construct the matrix continued fraction for $V(z)$, we can use the matrix $E$ defined in \eqref{matrixE} (see discussion in subsection \ref{subsecDhat}), the same way the continued fraction for $F(z)$ is constructed using the matrix $H$. The ingredients for the construction are the following.

Let $\mathcal{V}=\mathcal{V}_{0}$ be the operator on $\mathcal{E}_{0}=\mbox{span}\{e_{n}\}_{n=0}^{\infty}$ with matrix representation in the basis $\{e_{n}\}_{n=0}^{\infty}$ given by the matrix $E$ in \eqref{matrixE}, that is
\[
\begin{cases}
\mathcal{V} e_{0}=\sum_{m=0}^{p} a_{-(m+1)}^{(-m)} e_{m},\\[0.3em]
\mathcal{V} e_{n}=\sum_{m=1}^{n} a_{-(n+1)}^{(m)}\,e_{n-m}+\sum_{m=0}^{p} a_{-(n+m+1)}^{(-m)}\,e_{n+m},\quad 0< n< q,\\[0.3em]
\mathcal{V} e_{n}=\sum_{m=1}^{q} a_{-(n+1)}^{(m)}\,e_{n-m}+\sum_{m=0}^{p} a_{-(n+m+1)}^{(-m)}\,e_{n+m},\quad n\geq q.
\end{cases}
\]
Applying \eqref{6-26-22-1} and taking into account the discussion at the end of subsection \ref{subsecDhat}, we can characterize the power series $V_{-(j+1),-(i+1)}(z)$ defined in \eqref{def:Vijseries} as resolvent functions of the operator $\mathcal{V}$ as follows:
\[
V_{-(j+1),-(i+1)}(z)=\sum_{n=0}^{\infty}\frac{\langle\mathcal{V}^{n} e_{j},e_{i}\rangle}{z^{n+1}},\qquad i,j\geq 0.
\]
For each $k\geq 1$, let $E^{[k]}$ denote the submatrix of $E$ obtained after deleting the first $k$ rows and columns of $E$, and let $\mathcal{V}_{k}$ be the operator on $\mathcal{E}_{0}$ with matrix representation $E^{[k]}$. Then we define the formal series
\[
V_{-(j+1),-(i+1)}^{(k)}(z):=\sum_{n=0}^{\infty}\frac{\langle\mathcal{V}_{k}^{n}\, e_{j},e_{i}\rangle}{z^{n+1}},\qquad i,j\geq 0,\quad k\geq 1,
\] 
and the matrices
\begin{equation}\label{def:Vkmatrix}
V_{k}(z):=\begin{pmatrix}
V_{-1,-1}^{(k)}(z) &\cdots & V_{-p, -1}^{(k)}(z)\\
\vdots &\ddots& \vdots\\
V_{-1, -q}^{(k)}(z) & \cdots& V_{-p, -q}^{(k)}(z)\\
\end{pmatrix},\qquad k\geq 1.
\end{equation}
A straightforward application of Theorem \ref{TH6-8-23-1} (or the algorithm described in Remark~\ref{R6-12-23-1}) gives the following result:

\begin{proposition}\label{P6-10-23-1}
For any $k\geq 1$, the following identity holds between the matrices $V(z)$ and $V_{k}(z)$ defined in \eqref{U6-10-23-1} and \eqref{def:Vkmatrix}:
\[
V(z)=(\tau_{\nu,0}\circ\tau_{\nu,1}\circ\cdots\circ\tau_{\nu,k-1})(V_{k}(z)),
\] 
where we use the transformations defined as in \eqref{FLTalphas} but corresponding to the $\nu$ matrices given below. Thus, formally we have:
\begin{equation}\label{Umatrix}
V(z)=\cfrac{{\bf 1}}{\nu_{0}(z)+\nu^{+}_0\cfrac{{\bf 1}}{\nu_{1}(z)+\nu^{+}_1\cfrac{{\bf 1}}{\nu_{2}(z)+\nu^{+}_2\cfrac{{\bf 1}}
{\nu_{3}(z)+\ddots}\,\nu^{-}_2}\,\nu^{-}_1}\,\nu^{-}_0}. 
\end{equation}
The coefficient matrices used in \eqref{Umatrix} are defined as follows:
For any integer $k\geq 0$,  let $\nu^{+}_k$ be the $q\times q$  matrix
$$
\nu^{+}_k=\begin{pmatrix}
1 & 0 & \cdots & 0 & 0\\
0 & 1 & \cdots & 0 & 0\\
\vdots & \vdots & \ddots & \vdots & \vdots \\
0 & 0 & \cdots & 1 & 0\\
-a^{(1)}_{-(k+2)}& -a_{-(k+3)}^{(2)} & \cdots & -a^{(q-1)}_{-(q+k)} & -a_{-(q+k+1)}^{(q)}
\end{pmatrix}
$$
and let $\nu^{-}_k$ be the $p\times p$  matrix
$$
\nu^{-}_k=\begin{pmatrix}
1 & 0 & \cdots & 0 & a_{-(k+2)}^{(-1)}\\[0.4em]
0 & 1 & \cdots & 0 & a_{-(k+3)}^{(-2)}\\
\vdots & \vdots & \ddots & \vdots & \vdots \\
0 & 0 & \cdots & 1 & a_{-(p+k)}^{(-p+1)}\\[0.4em]
0 & 0 & \cdots & 0 & a_{-(p+k+1)}^{(-p)}
\end{pmatrix}.
$$
Also, let  $\nu_{k}(z)$ be the $q\times p$  matrix
$$
\nu_{k}(z)=\begin{pmatrix}
0 &\cdots & 0 & 0\\
\vdots &\ddots& \vdots & \vdots\\
0 & \cdots & 0 & 0 \\
0 & \cdots & 0 & z-a^{(0)}_{-(k+1)}
\end{pmatrix}.
$$
\end{proposition}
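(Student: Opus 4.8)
The plan is to recognize this proposition as nothing more than Theorem~\ref{TH6-8-23-1} applied verbatim to the operator $\mathcal{V}$ (and its truncations $\mathcal{V}_k$) in place of $\mathcal{H}$, exploiting the structural dictionary established at the end of subsection~\ref{subsecDhat}. The crucial observation is that the algebraic relation between the series $V_{-(j+1),-(i+1)}(z)$ and the matrix $E$ in \eqref{matrixE} is \emph{identical} to the relation between $\phi_{i,j}(z)=A_{i,j}(z)$ and the matrix $H$. Since the operator $\mathcal{V}=\mathcal{V}_0$ has matrix representation $E$, and its resolvent functions are exactly the series $V_{-(j+1),-(i+1)}(z)$, the matrix $V(z)$ in \eqref{U6-10-23-1} plays precisely the role of $F(z)$ in \eqref{def:Fmatrix}, with rows indexed by $j$ and columns by $i$ (note the transpose-like arrangement built into the definition of $V(z)$).

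First I would verify the base-level identity, namely the analogue of Theorem~\ref{10-21-22-1}: that $V(z)=\tau_{\nu,0}(V_1(z))$, where $\tau_{\nu,0}(X)=\mathbf{1}/(\nu_0(z)+\nu_0^{+}\,X\,\nu_0^{-})$. This follows by reading off the entries of $E$ according to the recipe in Remark~\ref{R6-12-23-1}: the matrix $\nu_k(z)$ uses the $(k+1)$-st diagonal entry of $E$, which is $a^{(0)}_{-(k+1)}$; the matrix $\nu_k^{+}$ uses the entries of the $(k+1)$-st row to the right of the diagonal, which are $a^{(1)}_{-(k+2)},\dots,a^{(q)}_{-(q+k+1)}$; and $\nu_k^{-}$ uses the entries of the $(k+1)$-st column below the diagonal, which are $a^{(-1)}_{-(k+2)},\dots,a^{(-p)}_{-(p+k+1)}$. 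Comparing these with the entries of $E$ in \eqref{matrixE} confirms that the $\nu$-matrices are exactly the $\alpha$-matrices of Theorem~\ref{TH6-8-23-1} built from $E$ instead of $H$.

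Next I would iterate. For each $k\geq 1$, the submatrix $E^{[k]}$ obtained by deleting the first $k$ rows and columns of $E$ is the matrix representation of $\mathcal{V}_k$, and it stands in the same relation to the series $V^{(k)}_{-(j+1),-(i+1)}(z)$ as $H^{[k]}$ does to $\phi^{(k)}_{i,j}(z)$. Applying the base identity to $\mathcal{V}_k$ yields $V_k(z)=\tau_{\nu,k}(V_{k+1}(z))$ for every $k\geq 1$, exactly as \eqref{10-22-22-2} was obtained in the proof of Theorem~\ref{TH6-8-23-1}. Composing the base relation $V(z)=\tau_{\nu,0}(V_1(z))$ with these iterates then gives the finite composition $V(z)=(\tau_{\nu,0}\circ\cdots\circ\tau_{\nu,k-1})(V_k(z))$, and reading this as $k\to\infty$ produces the formal continued fraction \eqref{Umatrix}.

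The main obstacle is purely bookkeeping rather than conceptual: one must confirm that the index shifts in the entries of $E$ propagate correctly, i.e.\ that deleting $k$ rows and columns from $E$ indeed produces a banded matrix whose $(k+1)$-st row, column, and diagonal entries match the subscripts appearing in $\nu_k^{+}$, $\nu_k^{-}$, and $\nu_k(z)$. Because $E$ has the same banded structure as $H$ but with superscripts $a^{(m)}_{\bullet}$ carrying the negative subscripts prescribed by the reflection $\gamma\mapsto\widehat{\gamma}$, this verification amounts to checking that the correspondence $A_{i,j}(z)\leftrightarrow V_{-(j+1),-(i+1)}(z)$ intertwines the two recursions. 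Since Theorem~\ref{TH6-8-23-1} is stated for an arbitrary one-sided banded operator of finite order, and $\mathcal{V}$ is such an operator, no genuinely new argument is required; the proposition is a direct specialization, and I would simply invoke Theorem~\ref{TH6-8-23-1} after recording the correspondence of coefficients.
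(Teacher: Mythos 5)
Your proposal is correct and is essentially identical to the paper's own argument: the paper proves this proposition precisely by invoking Theorem \ref{TH6-8-23-1} (equivalently, the algorithm of Remark \ref{R6-12-23-1}) with the banded matrix $E$ of \eqref{matrixE} playing the role of $H$, after characterizing the series $V_{-(j+1),-(i+1)}(z)$ as resolvent functions of the operator $\mathcal{V}$ with matrix representation $E$. Your index bookkeeping for the entries of $E$ feeding into $\nu_k(z)$, $\nu_k^{+}$, $\nu_k^{-}$ matches the paper's definitions exactly.
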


\subsection{Scalar continued fraction in the case $p=q=1$}\label{subsecscf}

We will discuss now the case $p=q=1$ and present continued fractions for $A_{0,0}(z)$ and $W_{0,0}(z)$. Note that in this case the matrices \eqref{def:Fmatrix} and \eqref{6-9-23-21} are scalars and we have $F(z)=A_{0,0}(z)$ and $G(z)=W_{0,0}(z)$. Consider three arbitrary bi-infinite sequences of complex numbers $(a^{(k)}_{n})_{n \in \mathbb{Z}}$, $-1\leq k\leq 1$, and  construct the one-sided matrix
\[
H=\begin{pmatrix}
a_{0}^{(0)} & a_{0}^{(1)} & & & \\[0.3em]
a_{0}^{(-1)} & a_{1}^{(0)} & a_{1}^{(1)} & \\[0.3em]
 & a_{1}^{(-1)} & a_{2}^{(0)} & a_{2}^{(1)} &  \\
 &  &  \ddots & \ddots & \ddots    
\end{pmatrix}
\]
and the two-sided matrix
\[
W=\begin{pmatrix}
& \ddots &  \ddots & \ddots & &  & & & \\
& & a_{-2}^{(-1)}& a_{-1}^{(0)} & a_{-1}^{(1)} & & & &\\[0.3em]
& & & a_{-1}^{(-1)} & a_{0}^{(0)} & a_{0}^{(1)} & &\\[0.3em]
& & &  & a_{0}^{(-1)} & a_{1}^{(0)} & a_{1}^{(1)} & & \\
& & & &  &  \ddots & \ddots & \ddots &   
\end{pmatrix}.
\]
Applying Theorem \ref{TH6-8-23-1}, the continued fraction for $A_{0,0}(z)$ obtained from the entries of $H$ is
\[
A_{0,0}(z)=\cfrac{1}{z-a_{0}^{(0)}-\cfrac{a_{0}^{(-1)}\,a_{0}^{(1)}}{z-a_{1}^{(0)}-\cfrac{a_{1}^{(-1)}\,a_{1}^{(1)}}{z-a_{2}^{(0)}-\cfrac{a_{2}^{(-1)}\,a_{2}^{(1)}}{\ddots}}}}.
\]
Furthermore, note that in this case the matrix \eqref{U6-10-23-1} reduces to $V_{-1,-1}(z)$ and so by Proposition \ref{P6-10-23-1} we have
\begin{equation}\label{6-17-23-1}
V_{-1,-1}(z)=\cfrac{1}{z-a_{-1}^{(0)}-\cfrac{a_{-2}^{(-1)}\,a_{-2}^{(1)}}{z-a_{-2}^{(0)}-\cfrac{a_{-3}^{(-1)}\,a_{-3}^{(1)}}{z-a_{-3}^{(0)}-\cfrac{a_{-4}^{(-1)}\,a_{-4}^{(1)}}{\ddots}}}}.
\end{equation}
To obtain the continued fraction for $W_{0,0}(z)$, we first construct the matrix $K$ using formulas \eqref{6-12-23-31}.
The entries in the banded matrix $K$ are identical to those in the matrix $H$ except for 
the entry $K_{0,0}$ in the first row and first column. By \eqref{6-12-23-31}, the formula for the entry $K_{0,0}$ is given by
$$
K_{0,0} =a_{0}^{(0)}+a^{(-1)}_{-1}a^{(1)}_{-1}\,V_{-1,-1}(z).
$$
Therefore, the matrix $K$ is an infinite tridiagonal matrix
\[
K=\begin{pmatrix}
a_{0}^{(0)}+a^{(-1)}_{-1}a_{-1}^{(1)}\,V_{-1,-1}(z) & a_{0}^{(1)} & & & \\[0.3em]
a_{0}^{(-1)} & a_{1}^{(0)} & a_{1}^{(1)} & \\[0.3em]
 & a_{1}^{(-1)} & a_{2}^{(0)} & a_{2}^{(1)} &  \\
 &  &  \ddots & \ddots & \ddots    
\end{pmatrix}.
\]
With the help of Theorem \ref{TH6-9-23-1}, Remark \ref{R6-12-23-1}, and \eqref{6-17-23-1}, we obtain the following continued fraction for $W_{0,0}(z)$:
\begin{equation}\label{scalardcf}
W_{0,0}(z)=\cfrac{1}{z-a_{0}^{(0)}-\cfrac{a^{(-1)}_{-1}\,a_{-1}^{(1)}}{z-a_{-1}^{(0)}-\cfrac{a_{-2}^{(-1)}\,a_{-2}^{(1)}}{z-a_{-2}^{(0)}-\cfrac{a_{-3}^{(-1)}\,a_{-3}^{(1)}}{\ddots}}}-\cfrac{a_{0}^{(-1)}\,a_{0}^{(1)}}{z-a_{1}^{(0)}-\cfrac{a_{1}^{(-1)}\,a_{1}^{(1)}}{z-a_{2}^{(0)}-\cfrac{a_{2}^{(-1)}\,a_{2}^{(1)}}{\ddots}}}}.
\end{equation}

\section{Lattice paths, resolvents of  the principal truncations of $H$, and rational approximation}\label{12-12-22-1}

Let  $n\geq 1$ and $\ell\geq 0$ be integers. For $0 \leq i,j\leq n-1 $, we denote by $\mathcal{D}_{[\ell, i, j, n]}$ the collection of all lattice paths of 
length $\ell$, with initial
point $(0,i)$, final point $(\ell,j)$, that have no vertex below the $x$-axis or above the line $y=n-1$. The weight polynomial associated with the collection $\mathcal{D}_{[\ell,i,j,n]}$ is denoted
\[
A_{[\ell,i,j,n]} :=\sum_{\gamma\in\mathcal{D}_{[\ell,i,j,n]}}w(\gamma), \qquad n \geq 1, \quad  \ell \geq 0,  \quad  0\leq i,j \leq n-1.
\]
Recall that if $\mathcal{D}_{[\ell,i,j,n]}=\emptyset$, then by definition $A_{[\ell,i,j,n]}=0$. We introduce now  the formal power series generated by the sequences of weight polynomials $A_{[\ell,i,j,n]}.$
For  $n  \geq 1$ and $0 \leq i,j\leq n-1$, let
\begin{equation}\label{12-12-22-3}
A_{i,j,n}(z) :=\sum_{\ell=0}^{\infty}\frac{A_{[\ell,i,j,n]}}{z^{\ell+1}}.
\end{equation}
\par
Let $n$ be a positive integer. Denote by $H_{n}$ the principal $n\times n$ truncation of the banded matrix $H$, that is
\begin{equation}\label{10-26-22-1}
H_{n}=\begin{pmatrix}
a_{0}^{(0)} & a_{0}^{(1)} &\cdots &  a_{0}^{(q)}&  & 0 \\
a_{0}^{(-1)} & \ddots & \ddots&&\ddots \\
\vdots & \ddots & \ddots & \ddots &&a_{n-q-1}^{(q)}\\
a_{0}^{(-p)} & & \ddots  & \ddots & \ddots& \vdots\\
 & \ddots & & \ddots & \ddots & a_{n-2}^{(1)}\\[0.3em]
0 &  & a_{n-p-1}^{(-p)} & \cdots & a_{n-2}^{(-1)} & a_{n-1}^{(0)}
\end{pmatrix}.
\end{equation}
Let $Q_{n}(z):=\det(z I_{n}-H_{n})$ be the characteristic polynomial of the matrix $H_{n}$, which is a monic polynomial of degree $n$. For $0\leq i,j \leq n-1$ we introduce the functions
\begin{align}
 R_{i,j,n}(z) & :=\langle (zI_n-H_{n})^{-1} e_{j}, e_{i}\rangle =\sum_{\ell=0}^{\infty}\frac{\langle{H}_{n}^{\ell}\, e_{j},e_{i}\rangle}{z^{\ell+1}},\label{12-12-22-4}\\
P_{i,j,n}(z) & :=Q_n(z)\langle (zI_{n}-H_{n})^{-1} e_{j}, e_{i}\rangle=\langle Q_n(z)(zI_{n}-H_{n})^{-1} e_{j}, e_{i}\rangle.\notag 
\end{align}
Here $\{e_{i}\}_{i=0}^{n-1}$ denotes the standard basis in $\mathbb{C}^{n}$. Since $Q_{n}(z)=\det(zI_n-H_n)$, it is clear that $P_{i,j,n}(z)$ is a polynomial in $z$ with $\deg P_{i,j,n}\leq n-1$, and so
\[
R_{i,j,n}(z)=\frac{P_{i,j,n}(z)}{Q_n(z)}=\langle(zI_n-H_n)^{-1} e_{j}, e_{i}\rangle
\]
is a rational function. Moreover, since $\langle (zI_n-H_{n})^{-1} e_{j}, e_{i}\rangle$ is the $(i,j)$-entry of $(zI_n-H_{n})^{-1}$, we can write
\[
 P_{i,j,n}(z) =(-1)^{i+j}\det((z I_n-H_{n})^{[j,i]}),
\]
where $(z I_n-H_{n})^{[j,i]}$ is the submatrix of $z I_n-H_{n}$ obtained by removing  the $j$-th row and the $i$-th column.
\par The following result may be proved in much the same way as Theorem \ref{theorem2}. We show that the resolvent functions $R_{i,j,n}(z)$ coincide with the power series $A_{i,j,n}(z)$ generated by the weight polynomials associated with the collections of lattice paths $\mathcal{D}_{[\ell, i, j, n]}$, $\ell\geq 0$.
\begin{theorem}\label{theorem12-12-22-T}
For each $n\geq 1$ we have
\begin{equation}\label{12-12-22-7}
R_{i,j,n}(z)=A_{i,j,n}(z), \qquad 0\leq i,j \leq n-1. 
\end{equation} 
\end{theorem}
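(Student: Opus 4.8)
The plan is to imitate the proof of the identity \eqref{6-26-22-1} in Theorem \ref{theorem2}, now working with the finite matrix $H_{n}$ in place of the infinite matrix $H$. In view of the generating series \eqref{12-12-22-4} for $R_{i,j,n}(z)$ and \eqref{12-12-22-3} for $A_{i,j,n}(z)$, it suffices to show that for every $\ell\geq 0$ and $0\leq i,j\leq n-1$ we have
\[
\langle H_{n}^{\ell}\, e_{j},e_{i}\rangle=A_{[\ell,i,j,n]}.
\]
The case $\ell=0$ is immediate since $\langle e_{j},e_{i}\rangle=\delta_{i,j}=A_{[0,i,j,n]}$.

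For $\ell\geq 1$, I would write out the matrix power explicitly, setting $i_{0}=i$ and $i_{\ell}=j$, as
\[
\langle H_{n}^{\ell}\, e_{j},e_{i}\rangle=(H_{n}^{\ell})_{i,j}=\sum_{i_{1},\ldots,i_{\ell-1}}(H_{n})_{i_{0},i_{1}}(H_{n})_{i_{1},i_{2}}\cdots (H_{n})_{i_{\ell-1},i_{\ell}},
\]
where the summation indices run over $0\leq i_{k}\leq n-1$. Using the compact expression $(H_{n})_{s,t}=a_{\min(s,t)}^{(t-s)}$, valid for $-p\leq t-s\leq q$ and $0\leq s,t\leq n-1$ (and zero otherwise), each nonzero factor $a_{\min(i_{k},i_{k+1})}^{(i_{k+1}-i_{k})}$ is, exactly as in the proof of Theorem \ref{theorem2}, the weight of the step from $(k,i_{k})$ to $(k+1,i_{k+1})$, cf. \eqref{6-29-22-2}.

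The crucial new feature, compared with Theorem \ref{theorem2}, is the range of the summation indices. Because $H_{n}$ is the $n\times n$ principal truncation, the indices $i_{1},\ldots,i_{\ell-1}$ are confined to $\{0,1,\ldots,n-1\}$, with nonvanishing products requiring $-p\leq i_{k+1}-i_{k}\leq q$ for all $k$. These are precisely the conditions describing a lattice path of length $\ell$ from $(0,i)$ to $(\ell,j)$ with no vertex below the $x$-axis and no vertex above the line $y=n-1$, that is, a path in $\mathcal{D}_{[\ell,i,j,n]}$. Hence there is a one-to-one correspondence between the admissible index sequences $(i_{1},\ldots,i_{\ell-1})$ and the paths in $\mathcal{D}_{[\ell,i,j,n]}$, under which each summand equals the weight of the corresponding path. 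Summing yields $\langle H_{n}^{\ell}\, e_{j},e_{i}\rangle=A_{[\ell,i,j,n]}$, and substituting into \eqref{12-12-22-4} and \eqref{12-12-22-3} gives \eqref{12-12-22-7}.

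The computation is entirely routine once this correspondence is observed; the only point requiring care is the bookkeeping of the two-sided height constraint $0\leq i_{k}\leq n-1$, which translates the finite size of $H_{n}$ into the upper bound $y=n-1$ on the paths (the lower bound $y=0$ being inherited exactly as in Theorem \ref{theorem2}). I do not anticipate a genuine obstacle, and the argument can be compressed by simply invoking the one-to-one correspondence and the weight identification already established in Theorem \ref{theorem2}.
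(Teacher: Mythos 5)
Your proposal is correct and follows essentially the same route as the paper's own proof: expanding $\langle H_{n}^{\ell}e_{j},e_{i}\rangle$ as a sum over index sequences, identifying each nonzero product with the weight of a path via $(H_{n})_{s,t}=a_{\min(s,t)}^{(t-s)}$, and invoking the bijection between admissible index sequences and paths in $\mathcal{D}_{[\ell,i,j,n]}$, where the truncation to size $n$ enforces the upper barrier $y=n-1$. No gaps; this matches the paper's argument step for step.
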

\begin{proof}
 In view of \eqref{12-12-22-3} and \eqref{12-12-22-4}, we need to show that for every $\ell\geq 0$ we have 
\begin{equation}\label{12-12-22-8}
(H_n^{\ell})_{i,j}=\langle{{H}}_{n}^{\ell}e_{j},e_{i}\rangle=A_{[\ell,i,j,n]}.
\end{equation}
We can express the entries \eqref{def:entriesH} of the matrix $H_n$ as follows:
\[
\begin{cases}
h_{i,j}=a_{\min (i,j)}^{(j-i)}, & -p \leq j-i\leq q, \ \ 0\leq i, j \leq n-1,\\
h_{i,j}=0, & \mbox{otherwise}.
\end{cases}
\]
Let  $\ell$ be a natural number. Fix integers $i$ and $j$,  $0\leq i, j \leq n-1.$ Set  $i_0=i$ and $i_\ell= j$. Writing out the matrix multiplication explicitly, we have
\begin{equation} \label{12-12-22-12}
\langle H_n^{\ell}\, e_{j},e_{i}\rangle=(H_n^{\ell})_{i,j}=\sum_{i_{1}, \ldots, i_{\ell-1}}h_{i_{0},i_{1}} h_{i_{1},i_{2}}\cdots 
h_{i_{\ell-2},i_{\ell-1}} h_{i_{\ell-1},i_{\ell}},
\end{equation}
where the sum runs over all choices of $0\leq i_{1},\ldots,i_{\ell-1}\leq n-1$. So, we can write
\[
\langle H_n^{\ell} e_{j},e_{i}\rangle=\sum_{i_{1}, \ldots, i_{\ell-1}}a_{\min (i_{0},i_{1})}^{(i_{1}-i_{0})}a_{\min (i_{1},i_{2})}^{(i_{2}-i_{1})} \cdots
 a_{\min (i_{\ell-2},i_{\ell-1})}^{(i_{\ell-1}-i_{\ell-2})}a_{\min (i_{\ell-1},i_{\ell})}^{(i_{\ell}-i_{\ell-1})},
\]
where  
\begin{equation}\label{12-12-22-11}
-p\leq i_{k+1}-i_{k}\leq q, \qquad 0 \leq i_{k},i_{k+1}\leq n-1,\qquad \mbox{for all}\,\,\,0 \leq k \leq \ell-1.
\end{equation}
Note that 
$$a_{\min (i_{k},i_{k+1})}^{(i_{k+1}-i_{k})}$$
is the weight of the step that starts at  the point $(k,i_k)$ and ends at the point $(k+1,i_{k+1})$ (see the proof of Theorem \ref{theorem2}).
Thus, the product 
$$a_{\min (i_{0},i_{1})}^{(i_{1}-i_{0})}a_{\min (i_{1},i_{2})}^{(i_{2}-i_{1})} \cdots
 a_{\min (i_{\ell-2},i_{\ell-1})}^{(i_{\ell-1}-i_{\ell-2})}a_{\min (i_{\ell-1},i_{\ell})}^{(i_{\ell}-i_{\ell-1})},$$
where \eqref{12-12-22-11} holds, is the weight of a lattice path of length $\ell$ with initial point $(0,i)$, final point $(\ell,j)$, with no point below the $x$-axis or above the line $y=n-1$; that is, a path in $\mathcal{D}_{[\ell,i,j,n]}$. 
Now, considering that by \eqref{12-12-22-12} the expression $\langle H_n^{\ell}e_{j},e_{i}\rangle$
equals the sum of such products, and there is a one-to-one correspondence between paths in $\mathcal{D}_{[\ell,i,j,n]}$ and
 choices of $i_{1},\ldots,i_{\ell-1}$ satisfying \eqref{12-12-22-11}, we get \eqref{12-12-22-8}
followed by \eqref{12-12-22-7}.
\end{proof}

\par 
If $\gamma$ is a lattice path in the graph $\mathcal{G}$, we define $\max(\gamma)$ to be the maximum of the heights of all vertices in $\gamma$, and $\min(\gamma)$ to be the minimum of the heights of all vertices in $\gamma$. For example, for the path $\gamma$ in Fig.~\ref{7-1-22-2} we have $\min(\gamma)=-2$ and $\max(\gamma)=3$.

The following result demonstrates that the rational functions $R_{i,j,n}(z)$ serve as rational approximants of the formal power series $A_{i,j}(z)$. Specifically, it states that the coefficients of the power series expansions of $A_{i,j}(z)$ and $R_{i,j,n}(z)$ must be equal up to a certain order. This order tends to infinity as $n\rightarrow \infty$, asymptotically like 
$n(1/p + 1/q).$

\begin{theorem}\label{5-16-23-11}
Let $n \geq 1$ and $0 \leq i,j \leq n-1$ be integers. Set
\[
\displaystyle L := \left[\frac{n-1-i}{q}\right] + \left[\frac{n-1-j}{p}\right]+1,
\]
where $[\cdot]$ denotes the integer part. If $0\leq \ell \leq L$, then the two  collections of paths $\mathcal{D}_{[\ell,i,j,n]}$ 
and $\mathcal{D}_{[\ell,i,j]}$ coincide
\begin{equation}\label{5-16-23-6}
\mathcal{D}_{[\ell,i,j,n]}=\mathcal{D}_{[\ell,i,j]},
\end{equation}
and
\begin{equation}\label{5-18-23-1}
A_{[\ell,i,j,n]}=A_{[\ell,i,j]}.
\end{equation}
Moreover,
\begin{equation}\label{5-16-23-3}
A_{i,j}(z)-R_{i,j,n}(z)=O(z^{-L-2}), \qquad z\rightarrow \infty.
\end{equation}
\end{theorem}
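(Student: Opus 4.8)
The plan is to establish the three assertions in the order stated, since each feeds into the next. The containment $\mathcal{D}_{[\ell,i,j,n]}\subseteq\mathcal{D}_{[\ell,i,j]}$ is immediate from the definitions, as the former collection simply imposes the extra ceiling constraint $y\leq n-1$. So the real content of \eqref{5-16-23-6} is the reverse inclusion: for $\ell\leq L$, every path $\gamma\in\mathcal{D}_{[\ell,i,j]}$ must automatically satisfy $\max(\gamma)\leq n-1$, so that the ceiling constraint is vacuous.

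To prove this I would argue by contraposition: suppose $\gamma\in\mathcal{D}_{[\ell,i,j]}$ reaches a height $M:=\max(\gamma)\geq n$, and bound $\ell$ from below by a step-counting argument. Writing the vertex heights as $i=v_0,v_1,\ldots,v_\ell=j$ and choosing an index $k^{*}$ with $v_{k^{*}}=M$, the first $k^{*}$ steps raise the height by $M-i$ using increments of at most $q$, so $k^{*}\geq\lceil(M-i)/q\rceil$; the remaining $\ell-k^{*}$ steps lower the height by $M-j$ using decrements of at most $p$, so $\ell-k^{*}\geq\lceil(M-j)/p\rceil$. Summing and using $M\geq n$ together with monotonicity of the ceiling gives $\ell\geq\lceil(n-i)/q\rceil+\lceil(n-j)/p\rceil$. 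The key arithmetic identity, valid for any integer $m\geq 1$, is $\lceil m/q\rceil=\lfloor(m-1)/q\rfloor+1$; applying it with $m=n-i$ and $m=n-j$ (both $\geq 1$ since $i,j\leq n-1$) yields $\ell\geq L+1$, contradicting $\ell\leq L$. Hence no such path exists, establishing \eqref{5-16-23-6}. Assertion \eqref{5-18-23-1} is then immediate, since $A_{[\ell,i,j,n]}$ and $A_{[\ell,i,j]}$ are weight polynomials summed over collections that now coincide.

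For the final estimate \eqref{5-16-23-3}, I would invoke Theorem~\ref{theorem12-12-22-T}, which identifies $R_{i,j,n}(z)=A_{i,j,n}(z)=\sum_{\ell\geq 0}A_{[\ell,i,j,n]}\,z^{-\ell-1}$, together with the definition \eqref{7-1-22-6} of $A_{i,j}(z)$. Subtracting termwise gives
\[
A_{i,j}(z)-R_{i,j,n}(z)=\sum_{\ell=0}^{\infty}\frac{A_{[\ell,i,j]}-A_{[\ell,i,j,n]}}{z^{\ell+1}}.
\]
By \eqref{5-18-23-1} every coefficient with $0\leq\ell\leq L$ vanishes, so the coefficients of $z^{-1},\dots,z^{-L-1}$ are all zero and the difference is $O(z^{-L-2})$, which is exactly \eqref{5-16-23-3}.

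The argument has no serious obstacle; it is essentially a clean length-counting bound feeding into a termwise Laurent comparison. The only point requiring care is matching the step counts precisely to the stated $L$: one must use the ceiling-floor identity and the hypotheses $i,j\leq n-1$ to guarantee $n-i,n-j\geq 1$, so that the two $+1$ contributions combine correctly into $L+1$ rather than falling one short. I expect this bookkeeping with integer parts to be the only delicate step.
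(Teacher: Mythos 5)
Your proof is correct and takes essentially the same approach as the paper's: the nontrivial inclusion $\mathcal{D}_{[\ell,i,j]}\subset\mathcal{D}_{[\ell,i,j,n]}$ is proved by contradiction via step-counting (ascending past height $n-1$ forces at least $\left[\frac{n-1-i}{q}\right]+1$ steps up and $\left[\frac{n-1-j}{p}\right]+1$ steps down, exceeding $L$), and the estimate \eqref{5-16-23-3} then follows from Theorem \ref{theorem12-12-22-T} by termwise comparison of Laurent coefficients. The only difference is bookkeeping: the paper organizes the arithmetic through the division algorithm and derives the contradiction from the descent bound alone, while you sum both ceiling bounds and invoke the identity $\lceil m/q\rceil=\lfloor (m-1)/q\rfloor+1$; these are the same counting argument.
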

\begin{proof} 
Under the given assumptions, we will show that \eqref{5-16-23-6} holds. Then equation  \eqref{5-18-23-1} follows from definition of the 
weight polynomials $A_{[\ell,i,j,n]}$ and $A_{[\ell,i,j]}.$
The validity of this equation directly implies the estimate \eqref{5-16-23-3}. Since $\mathcal{D}_{[\ell,i,j,n]}\subset\mathcal{D}_{[\ell,i,j]}$, 
it is enough to prove $\mathcal{D}_{[\ell,i,j]}\subset\mathcal{D}_{[\ell,i,j,n]}$. Therefore, let $\gamma\in\mathcal{D}_{[\ell,i,j]}$ be an arbitrary path. Our goal is to demonstrate that
\begin{equation}\label{5-16-23-4}
\max(\gamma) \leq n-1.
\end{equation}
By definition, this implies that $\gamma\in\mathcal{D}_{[\ell,i,j,n]}$, thereby justifying the desired inclusion. Using the division algorithm, we  can write
\begin{equation}
n-1-i=qm+q_1, \qquad 0\le q_1<q, \label{5-16-23-7}\\
\end{equation}
and 
\begin{equation}
 n-1-j=pk+p_1,\qquad  0\le p_1<p, \label{5-16-23-8}\\
\end{equation}
where $q_1 $ and $p_1$ are integers.
Therefore
\[
m=\left[\frac{n-1-i}{q}\right],\qquad
k=\left[\frac{n-1-j}{p}\right],
\]
and $L=m+k+1.$
Suppose that $\max(\gamma)>n-1$. Since $\gamma$ starts at the point $(0,i)$, equation \eqref{5-16-23-7} implies that $\gamma$ reaches its maximum height $\max(\gamma)$ in at least $m+1$ steps. After $\gamma$ reaches its maximum height, it must reach  height $j$ in at most $k$ steps, but this is impossible since
\[
\max(\gamma)-kp>n-1-kp=j+p_1 \ge j,
\]
where we used \eqref{5-16-23-8}. So \eqref{5-16-23-4} is justified. 
\end{proof}

Let $n$ be a positive integer. Consider the $q\times p$ matrix  
\begin{equation}\label{def:Rmatrix}
R_n(z):=\begin{pmatrix}
R_{0,0, n}(z) &\cdots & R_{0, p-1, n}(z)\\
\vdots &\ddots& \vdots\\
R_{q-1, 0, n}(z) & \cdots & R_{q-1, p-1, n}(z)\\
\end{pmatrix}
\end{equation}
with $(i,j)$-entry, $0 \leq i \leq q-1$, $0 \leq j \leq p-1$, given by the rational function $R_{i,j,n}(z)$. The explicit expression for $R_{i,j,n}(z)$ is provided in \eqref{12-12-22-4}. We present now the matrix continued fraction expansion for the matrix $R_n(z)$.
As the formula will show, the matrix $R_n(z)$ can be considered as a special truncation of the matrix continued fraction for $F(z)$ in \eqref{def:FCFmatrix} since the expansions \eqref{def:FCFmatrix} and \eqref{MCFforRn} share the following coefficients: $\alpha_{k}(z)$ for $0\leq k\leq n-1$, $\alpha_{k}^{+}$ for $0\leq k\leq n-q-1$, and $\alpha_{k}^{-}$ for $0\leq k\leq n-p-1$.
Before we describe the continued fraction for $R_{n}(z)$, we define several matrices $\rho_{k}(z), \rho_{k}^{+}, \rho_{k}^{-}$, $0\leq k\leq n-1$.

For any integer $0 \leq k \leq n-1$, the $q\times p$ matrix $\rho_{k}(z)$ is 
$$
\rho_{k}(z)=\alpha_{k}(z)=\begin{pmatrix}
0 &\cdots & 0 & 0\\
\vdots &\ddots& \vdots & \vdots\\
0 & \cdots & 0 & 0 \\
0 & \cdots & 0 & z-a^{(0)}_k
\end{pmatrix}.
$$
For $0 \leq k \leq n-q-1$, the $q\times q$ matrix $\rho^{+}_k$ is defined as
$$
\rho^{+}_k=\alpha^{+}_k=\begin{pmatrix}
1 & 0 & \cdots & 0 & 0\\
0 & 1 & \cdots & 0 & 0\\
\vdots & \vdots & \ddots & \vdots & \vdots \\
0 & 0 & \cdots & 1 & 0\\
-a^{(1)}_k & -a_{k}^{(2)} & \cdots & -a^{(q-1)}_k & -a_{k}^{(q)}
\end{pmatrix}.
$$
For $n-q\leq k \leq n-1$, the $q\times q$ matrix $\rho^{+}_k$ is given by
$$
\rho^{+}_k=\begin{pmatrix}
1 & 0 & \cdots & 0 & 0\\
0 & 1 & \cdots & 0 & 0\\
\vdots & \vdots & \ddots & \vdots & \vdots \\
0 & 0 & \cdots & 1 & 0\\
-b^{(1)}_k & -b_{k}^{(2)} & \cdots & -b^{(q-1)}_k & -b_{k}^{(q)}
\end{pmatrix},
$$
where the coefficients $b^{(i)}_{k}$ in the above matrix are defined as follows:
\[
\begin{cases}
b^{(i)}_{k}=a^{(i)}_{k}, & 1\leq i\leq n-k-1, \\[0.3em]
b_{k}^{(i)} =0, & n-k\leq i\leq q.\\[0.3em]
\end{cases}
\]
Additionally, for $0 \leq k \leq n-p-1$, the $p\times p$ matrix $\rho^{-}_k$ is 
$$
\rho^{-}_k=\alpha^{-}_k=\begin{pmatrix}
1 & 0 & \cdots & 0 & a_{k}^{(-1)}\\[0.2em]
0 & 1 & \cdots & 0 & a_{k}^{(-2)}\\
\vdots & \vdots & \ddots & \vdots & \vdots \\
0 & 0 & \cdots & 1 & a_{k}^{(-p+1)}\\[0.1em]
0 & 0 & \cdots & 0 & a_{k}^{(-p)}
\end{pmatrix}. 
$$
For $n-p\leq k \leq n-1$, the $p\times p$ matrix $\rho^{-}_k$ is given by
$$
\rho^{-}_k=\begin{pmatrix}
1 & 0 & \cdots & 0 & b_{k}^{(-1)}\\[0.2em]
0 & 1 & \cdots & 0 & b_{k}^{(-2)}\\
\vdots & \vdots & \ddots & \vdots & \vdots \\
0 & 0 & \cdots & 1 & b_{k}^{(-p+1)}\\[0.1em]
0 & 0 & \cdots & 0 & b_{k}^{(-p)}
\end{pmatrix}
$$
where the coefficients $b^{(-j)}_{k}$ in the above matrix are defined according to the following formulas
\[
\begin{cases}
b^{(-j)}_{k}=a^{(-j)}_{k}, & 1\leq j\leq n-k-1, \\[0.3em]
b_{k}^{(-j)} =0, & n-k\leq j\leq p.\\[0.3em]
\end{cases}
\]

Analogously to \eqref{FLTalphas}, we define the following transformations for matrices $X$ of size $q\times p$:
\[
\tau_{\rho,k}(X)=\frac{\mathbf{1}}{\rho_{k}(z)+\rho_{k}^{+}\,X\,\rho_{k}^{-}},\qquad 0\leq k\leq n-1.
\]

\begin{proposition}\label{prop:MCFforRn}
The matrix $R_{n}(z)$ defined in \eqref{def:Rmatrix} has the expansion
\[
R_{n}(z)=(\tau_{\rho,0}\circ\tau_{\rho,1}\circ\cdots\circ\tau_{\rho,n-1})(X_{n}(z)),
\]
where $X_{n}(z)$ is the $q\times p$ matrix with entries on the main diagonal equal to $1/z$, and zero entries elsewhere.
More graphically, we have
\begin{equation}\label{MCFforRn}
R_n(z)=\cfrac{\mathbf{1}}{\rho_{0}(z)+\rho_{0}^{+}\cfrac{\mathbf{1}}{\rho_{1}(z)+\rho_{1}^{+}\cfrac{\mathbf{1}}{\ddots\rho_{n-2}(z)+\rho_{n-2}^{+}\cfrac{\mathbf{1}}{\rho_{n-1}(z)+\rho_{n-1}^{+}X_{n}(z)\rho_{n-1}^{-}}\,\rho_{n-2}^{-}}\,\rho_{1}^{-}}\,\rho_{0}^{-}}.
\end{equation} 
\end{proposition}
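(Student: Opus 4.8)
The plan is to mimic the iterative structure behind Theorem~\ref{TH6-8-23-1}, replacing the infinite operator $\mathcal{H}$ and its shifts by the finite truncation $H_{n}$ and its successive principal truncations. For $0\le k\le n$ let $H_{n}^{[k]}$ denote the $(n-k)\times(n-k)$ matrix obtained by deleting the first $k$ rows and columns of $H_{n}$, which I would regard as an operator on $\mathcal{E}_{0}=\mathrm{span}\{e_{m}\}_{m\ge 0}$ that acts as zero on $e_{m}$ for $m\ge n-k$. I would then define the scalar series $R^{(k)}_{i,j,n}(z):=\langle(zI-H_{n}^{[k]})^{-1}e_{j},e_{i}\rangle$ for $i,j\ge 0$ and collect them into the $q\times p$ matrix $R_{n}^{(k)}(z)$ with entries $R^{(k)}_{i,j,n}(z)$, $0\le i\le q-1$, $0\le j\le p-1$. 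Since $H_{n}^{[k]}$ annihilates $e_{m}$ for $m\ge n-k$, the resolvent is block diagonal, so $R^{(k)}_{i,j,n}(z)=z^{-1}\delta_{i,j}$ whenever $i\ge n-k$ or $j\ge n-k$; in particular $R_{n}^{(0)}(z)=R_{n}(z)$ and $R_{n}^{(n)}(z)=X_{n}(z)$.

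The heart of the argument is the one-step identity
\[
R_{n}^{(k)}(z)=\tau_{\rho,k}\bigl(R_{n}^{(k+1)}(z)\bigr),\qquad 0\le k\le n-1,
\]
which I would prove exactly as in Theorem~\ref{theorem3} and its matrix reformulation Theorem~\ref{10-21-22-1}, applied to $zI-H_{n}^{[k]}$ in place of $zI-H$. Partitioning $zI-H_{n}^{[k]}=\left(\begin{smallmatrix}A&B\\ C&D\end{smallmatrix}\right)$ with $A=(z-a_{k}^{(0)})$ of size $1\times1$ and $D=zI-H_{n}^{[k+1]}$ in the lower right corner, the block-inversion formula \eqref{invblock} expresses the entries of $(zI-H_{n}^{[k]})^{-1}$ through the Schur complement $A-BD^{-1}C$ and the entries of $D^{-1}$, the latter being precisely the $R^{(k+1)}_{i-1,j-1,n}(z)$. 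Rerunning the computations that produced \eqref{6-4-1}--\eqref{6-4-4} yields the analogues of the transformation $T$, and repackaging them as in the proof of Theorem~\ref{10-21-22-1} gives $R_{n}^{(k)}(z)=\mathbf{1}/(\rho_{k}(z)+\rho_{k}^{+}R_{n}^{(k+1)}(z)\rho_{k}^{-})=\tau_{\rho,k}(R_{n}^{(k+1)}(z))$. The pseudo-quotient is well defined here because $R^{(k)}_{0,0,n}(z)=z^{-1}+O(z^{-2})$ has nonzero leading coefficient.

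The one genuinely new point, and the main obstacle, is the bookkeeping of the truncated coefficients. When $k\le n-q-1$ the row block $B$ carries the full set of entries $-a_{k}^{(1)},\dots,-a_{k}^{(q)}$ and $\rho_{k}^{+}=\alpha_{k}^{+}$; but as soon as $k\ge n-q$ the matrix $H_{n}^{[k]}$ has size $n-k<q+1$, so only the entries $-a_{k}^{(i)}$ with $1\le i\le n-k-1$ survive and the remaining positions are genuinely zero. This is exactly the passage from $a_{k}^{(i)}$ to $b_{k}^{(i)}$ in the definition of $\rho_{k}^{+}$, and the symmetric phenomenon for the column block $C$ accounts for the cutoff $b_{k}^{(-j)}$ in $\rho_{k}^{-}$ once $k\ge n-p$. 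I would verify carefully that, with these truncated entries, the Schur-complement identities hold verbatim, so that no spurious terms arise from indices exceeding $n-k-1$; the rest is a routine transcription of the infinite-dimensional computation.

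Finally, iterating the one-step identity from $k=0$ to $k=n-1$ and using $R_{n}^{(n)}(z)=X_{n}(z)$ gives
\[
R_{n}(z)=R_{n}^{(0)}(z)=(\tau_{\rho,0}\circ\tau_{\rho,1}\circ\cdots\circ\tau_{\rho,n-1})(X_{n}(z)),
\]
which is precisely the finite matrix continued fraction \eqref{MCFforRn}.
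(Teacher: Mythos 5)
Your proposal is correct and follows essentially the same route as the paper: the paper also extends $H_n$ by zeros to an infinite one-sided matrix $\widetilde{H}$, views $R_n(z)$ as the resolvent matrix of the corresponding operator, applies the one-step relation \eqref{10-21-22-2} (the block/Schur-complement argument of Theorems \ref{theorem3} and \ref{10-21-22-1}) $n$ times to the successive truncations, and ends at the zero operator whose resolvent matrix is $X_n(z)$. Your extra bookkeeping of the cutoff coefficients $b_k^{(\pm i)}$ and the observation $R^{(k)}_{i,j,n}(z)=z^{-1}\delta_{i,j}$ beyond the truncation are exactly the details the paper leaves implicit.
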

\begin{proof}
Let $\widetilde{H}=(\widetilde{h}_{i,j})_{i,j=0}^{\infty}$ be the one-sided infinite matrix whose principal $n\times n$ truncation is the matrix $H_{n}$ in \eqref{10-26-22-1}, with all other entries of $\widetilde{H}$ being zero. In virtue of \eqref{12-12-22-4}, the matrix \eqref{def:Rmatrix} can be viewed as a matrix $F(z)$ of resolvent functions of the operator with matrix representation $\widetilde{H}$. Therefore, we can apply in this setting the same relations \eqref{10-21-22-2} between resolvent functions associated with operators obtained by deleting the first few rows and columns of $\widetilde{H}$, as it is done in the proof of Theorem~\ref{TH6-8-23-1}. After applying these relations $n$ times we obtain \eqref{MCFforRn}, where $X_{n}(z)$ is a matrix of resolvent functions for the operator identically zero. It is clear that such a matrix is  a $q\times p$ matrix with entries on the main diagonal equal to $1/z$, and zero entries elsewhere.
\end{proof}

In the next section, we will use the following proposition.
\begin{proposition}\label{12-12-22-30}
Let $\ell\geq 1$ be a positive integer and let 
\begin{equation}\label{12-12-22-31}
\displaystyle N:=\left[\frac{p q \ell}{p+q}\right]+\max\{p,q\}+1, 
\end{equation}
where $[\cdot]$ is the integer part. Suppose that $n\geq 2N+1$ and $ N \leq i\leq n-1-N $. Then there is a one-to-one correspondence between paths in $\mathcal{D}_{[\ell,i,i,n]}$ and paths in $\mathcal{P}_{[\ell,0,0]}.$ The paths in the collection $\mathcal{D}_{[\ell,i,i,n]}$ can be obtained by shifting the paths in the collection $\mathcal{P}_{[\ell,0,0]}$ $i$ units upwards.
\end{proposition}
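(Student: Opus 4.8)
The plan is to realize the one-to-one correspondence explicitly through the vertical shift map and then verify that it respects the two height constraints defining $\mathcal{D}_{[\ell,i,i,n]}$. Given a path $\gamma\in\mathcal{P}_{[\ell,0,0]}$ with vertices $(0,y_{0}),(1,y_{1}),\ldots,(\ell,y_{\ell})$, where $y_{0}=y_{\ell}=0$, I would write $\gamma+i$ for the path with vertices $(k,y_{k}+i)$, $0\leq k\leq\ell$, obtained by adding $i$ to every height. Since every step of $\gamma$ belongs to one of the three types in \eqref{defsteps} and a vertical translation preserves the step type, $\gamma+i$ is again a genuine lattice path, starting at $(0,i)$ and ending at $(\ell,i)$. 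The map $\gamma\mapsto\gamma+i$ is manifestly injective with inverse the downward shift by $i$, so it suffices to prove that it restricts to a bijection of $\mathcal{P}_{[\ell,0,0]}$ onto $\mathcal{D}_{[\ell,i,i,n]}$.

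The backward inclusion will be immediate: if $\delta\in\mathcal{D}_{[\ell,i,i,n]}$ then $\delta-i$ is a path of length $\ell$ from $(0,0)$ to $(\ell,0)$, and since $\mathcal{P}_{[\ell,0,0]}$ imposes no height restriction, $\delta-i\in\mathcal{P}_{[\ell,0,0]}$. The forward inclusion is where the hypotheses on $i$ and $n$ are used: I must show that $\gamma+i$ never goes below the $x$-axis nor above the line $y=n-1$, that is, that $\min(\gamma)+i\geq 0$ and $\max(\gamma)+i\leq n-1$.

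The crucial step, and the one I expect to be the main obstacle, is the a priori estimate $\max(\gamma)\leq[pq\ell/(p+q)]$ and $-\min(\gamma)\leq[pq\ell/(p+q)]$, valid for every $\gamma\in\mathcal{P}_{[\ell,0,0]}$. To prove the first, let $M=\max(\gamma)$ be attained at abscissa $t$. Climbing from the initial height $0$ up to $M$ forces $M=\sum_{k=0}^{t-1}(y_{k+1}-y_{k})\leq tq$, while descending from $M$ back to the terminal height $0$ forces $M\leq(\ell-t)p$; adding the two resulting inequalities $t\geq M/q$ and $\ell-t\geq M/p$ gives $\ell\geq M(p+q)/(pq)$, whence $M\leq[pq\ell/(p+q)]$ because $M$ is an integer. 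A symmetric argument, interchanging the roles of up- and downsteps, will yield the companion bound for $-\min(\gamma)$.

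Finally, setting $M_{0}:=[pq\ell/(p+q)]$, so that $N=M_{0}+\max\{p,q\}+1>M_{0}$, the conclusion follows at once from the hypotheses $N\leq i\leq n-1-N$. Indeed $i\geq N\geq M_{0}\geq-\min(\gamma)$ gives $\min(\gamma)+i\geq 0$, and $n-1-i\geq N\geq M_{0}\geq\max(\gamma)$ gives $\max(\gamma)+i\leq n-1$, so $\gamma+i\in\mathcal{D}_{[\ell,i,i,n]}$, establishing the bijection. Note that only the inequality $N\geq M_{0}$ is actually needed, so the extra slack $\max\{p,q\}+1$ built into $N$ is harmless.
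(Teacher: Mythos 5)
Your proof is correct, and its skeleton matches the paper's: realize the correspondence by the vertical shift, observe that the downward inclusion is trivial, and reduce everything to an a priori bound on $\max(\gamma)$ and $-\min(\gamma)$ for unrestricted paths that start and end at the same height. The genuine difference lies in how that height bound is established. The paper argues by contradiction: it introduces the largest integer $m$ with $qm\leq(\ell-m)p$, shows that a path exceeding height $i+q(m+1)$ cannot descend back to height $i$ in the at most $\ell-m-2$ remaining steps, and then computes $q(m+1)\leq pq\ell/(p+q)+q\leq N$ (symmetrically for the minimum, with $p$ in place of $q$). Your argument is more direct and sharper: at an abscissa $t$ where the maximum $M$ is attained, the climb forces $t\geq M/q$ and the descent forces $\ell-t\geq M/p$, and adding these gives $M\leq pq\ell/(p+q)$, hence $M\leq[pq\ell/(p+q)]$ by integrality — no auxiliary integer, no case analysis, and a bound that beats the paper's by the additive term $\max\{p,q\}$ (the $+q$ or $+p$ left over in the paper's estimate). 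Your closing remark is also apt: the slack $\max\{p,q\}+1$ built into $N$ is exactly what the paper's cruder estimate requires to close the argument, whereas your estimate shows it is not needed for the proposition itself; the hypothesis $n\geq 2N+1$ merely keeps the range of admissible $i$ nonempty. Both routes prove the statement; yours is the leaner one, the paper's is the one whose constants explain the stated form of $N$.
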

\begin{proof}
We will show that under the stated assumptions we have $\mathcal{D}_{[\ell,i,i,n]}=\mathcal{P}_{[\ell,i,i]}$. This identity clearly justifies the claim. Since by definition $\mathcal{D}_{[\ell,i,i,n]}\subset\mathcal{P}_{[\ell,i,i]}$, it suffices to prove $\mathcal{P}_{[\ell,i,i]}\subset\mathcal{D}_{[\ell,i,i,n]}$. So let $\gamma\in\mathcal{P}_{[\ell,i,i]}$ be arbitrary. We will show that
\begin{align}
\max(\gamma) & \leq i+N, \qquad \label{12-12-22-34}\\
\min(\gamma) & \geq i-N. \qquad \label{12-12-22-35}
\end{align}
Since by assumption $i-N\geq 0$ and $i+N\leq n-1$, it follows that $\gamma\in\mathcal{D}_{[\ell,i,i,n]}$, hence the desired inclusion will be justified.

First we prove \eqref{12-12-22-34}. Let $m$ be the largest integer such that $qm \leq (\ell-m)p$. This implies that $0\leq m<\ell$ and 
\begin{equation}\label{ineqpq1}
q(m+1)>(\ell-m-1)p.
\end{equation}
Then we can write
\begin{equation}\label{12-12-22-36}
qm=(\ell-m)p-p_1,
\end{equation}
where $p_1$ is an integer satisfying $0\leq p_1 < p+q.$ Solving for $m$ in \eqref{12-12-22-36} we get
\begin{equation}\label{definm} 
m=\frac{\ell p-p_1}{p+q}.
\end{equation}
We claim that
\begin{equation}\label{ineqpq2}
\max(\gamma)\leq i+q(m+1).
\end{equation}
Indeed, suppose that $\max(\gamma)>i+q(m+1)$. Since $\gamma$ starts at the point $(0,i)$, $\gamma$ reaches its maximum height $\max(\gamma)$ in at least $m+2$ steps. After $\gamma$ reaches its maximum height, it must return to height $i$ in at most $\ell-m-2$ steps, but this is impossible since
\[
\max(\gamma)-(\ell-m-2)p>i+q(m+1)-(\ell-m-2)p>i,
\]
where we used \eqref{ineqpq1}. So \eqref{ineqpq2} is justified. 

Applying \eqref{ineqpq2}, \eqref{definm}, and \eqref{12-12-22-31} we obtain
$$
\max(\gamma)-i\leq q(m+1)=\frac{\ell pq-p_1q}{p+q}+q \leq \frac{\ell pq}{p+q}+q\leq N.
$$

\par We can prove \eqref{12-12-22-35} in a similar way. Now let $m$ be the largest integer such that $pm \leq (\ell-m)q.$ As above, we have $0\leq m<\ell$ and
\[
p(m+1)>(\ell-m-1)q.
\]
Writing 
\[
pm=(\ell-m)q-q_1,  
\]
we have $0\leq q_1 < p+q$ and 
$$ 
m=\frac{\ell q-q_1}{p+q}.
$$
The reader can check that $\min(\gamma)\geq i-p(m+1)$, and therefore
$$
i-\min(\gamma)\leq p(m+1)=\frac{\ell pq-q_1p}{p+q}+p\leq N.   
$$
\end{proof}

\section{Random polynomials }\label{randompol}

In this section we assume that the banded matrices $H$ and $W$ have $p+q+1$ diagonals with entries that are independent random variables. 
\par
Let $\mu_{k}$, $-p\leq k\leq q$, be a collection of $p+q+1$ Borel probability measures with compact support in $\mathbb{C}$.
For each $-p\leq k\leq q$, let $a^{(k)}=(a^{(k)}_{n})_{n \in \mathbb{Z}}$ be a sequence of i.i.d. random variables with distribution $\mu_{k}$. Moreover, we assume that the whole collection $\{a^{(k)}_{n}: n \in \mathbb{Z}, -p\leq k\leq q\}$ is jointly independent
and the entries $a^{(k)}_{n}$ are surely bounded in modulus by an absolute constant.
\par
Let $n$ be a nonnegative integer. As in Section \ref{12-12-22-1}, let  $H_{n}$ be the principal $n\times n$ truncation of the banded matrix $H$ and let $Q_{n}(z)=\det(z I_{n}-H_{n})$ be the characteristic polynomial of the matrix $H_{n}$.
\par
Denote by $\{\lambda_{i,n}\}_{i=1}^{n}$
the eigenvalues of $H_{n}$, counting multiplicities. 
Let $\sigma_{n}$ be the empirical measure  of the matrix $H_{n}$
\[
\sigma_{n}:=\frac{1}{n}\sum_{i=1}^{n}\delta_{\lambda_{i,n}}.
\]
Since we have uniform boundedness of the matrix entries, the eigenvalues $\lambda_{i,n}$ are also uniformly bounded. Clearly, $\sigma_{n}$ is a random probability measure. Its mean $\mathbb{E}\sigma_{n}$ is the probability measure defined via duality by
\[
\int g\,d\mathbb{E}\sigma_{n}=\mathbb{E} \int g\,d\sigma_{n}
\]
for every continuous function $g$.

\par
Theorem \ref{10-26-2022} gives an expression for the limit of the expected values of the moments of the measure $\sigma_n$. In the case that all the eigenvalues of $H_n$ are real for every $n$, Theorem \ref{10-26-2022} implies that
the average measure $\mathbb{E}\sigma_n$ converges weakly to a probability distribution on the real line.

\begin{theorem}\label{10-26-2022}
Let $\mu_{k}$, $-p\leq k\leq q$, be a collection of $p+q+1$ arbitrary Borel probability measures with compact support in $\mathbb{C}$.
For each $-p\leq k\leq q$, let $a^{(k)}=(a^{(k)}_{n})_{n \in \mathbb{Z}}$ be a sequence of i.i.d. random variables with distribution $\mu_{k}$. Assume also that the whole collection $\{a^{(k)}_{n}: n \in \mathbb{Z}, -p\leq k\leq q\}$ is jointly independent
and the entries $a^{(k)}_{n}$ are surely bounded in modulus by an absolute constant. Then for each nonnegative integer $\ell$ we have
\begin{equation}\label{10-26-22-10}
\lim_{n\rightarrow\infty} \mathbb{E} (\int z^{\ell} d\sigma_{n}(z))=\lim_{n\rightarrow\infty}\mathbb{E}\left(\frac{1}{n}\sum_{i=1}^{n}
\lambda_{i,n}^{\ell}\right)=\mathbb{E} (W_{[\ell,0,0]}).
 \end{equation}
\end{theorem}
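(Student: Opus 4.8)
The plan is to express the spectral moments as normalized traces of powers of $H_n$ and reduce the problem to a lattice-path count. By definition of $\sigma_n$ we have $\int z^{\ell}\,d\sigma_n(z)=\frac1n\sum_{i=1}^n\lambda_{i,n}^{\ell}=\frac1n\Tr(H_n^{\ell})$, where the last equality holds for an arbitrary (possibly non-normal) matrix through its Jordan form. Writing the trace as $\sum_{i=0}^{n-1}\langle H_n^{\ell} e_i,e_i\rangle$ and invoking the combinatorial identity established in the proof of Theorem \ref{theorem12-12-22-T}, namely $\langle H_n^{\ell} e_i,e_i\rangle=A_{[\ell,i,i,n]}$, I obtain
\[
\frac1n\sum_{i=1}^n\lambda_{i,n}^{\ell}=\frac1n\sum_{i=0}^{n-1}A_{[\ell,i,i,n]}.
\]
Taking expectations, it then suffices to analyze the asymptotics of $\frac1n\sum_{i=0}^{n-1}\mathbb{E}(A_{[\ell,i,i,n]})$ as $n\to\infty$. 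The case $\ell=0$ is immediate, since both sides of \eqref{10-26-22-10} equal $1$, so from now on I assume $\ell\geq 1$.

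The heart of the argument is Proposition \ref{12-12-22-30}. Set $N=[pq\ell/(p+q)]+\max\{p,q\}+1$ and, for $n\geq 2N+1$, split the index set $\{0,\dots,n-1\}$ into the interior range $N\leq i\leq n-1-N$ and the two boundary ranges. For an interior index $i$, Proposition \ref{12-12-22-30} provides the bijection $\gamma\mapsto\gamma+i$ from $\mathcal{P}_{[\ell,0,0]}$ onto $\mathcal{D}_{[\ell,i,i,n]}$ obtained by shifting $i$ units upwards, whence $A_{[\ell,i,i,n]}=\sum_{\gamma\in\mathcal{P}_{[\ell,0,0]}}w(\gamma+i)$. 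The crux of the proof is the next observation: an upward shift replaces each step weight $a^{(k)}_m$ by $a^{(k)}_{m+i}$, and since the entries are i.i.d.\ along each diagonal and jointly independent across diagonals, the translated family $\{a^{(k)}_{m+i}\}$ has exactly the same joint law as $\{a^{(k)}_m\}$; the shift preserves both the marginal distribution $\mu_k$ of every entry and the pattern of which steps share a common weight variable, and hence all mixed moments. Therefore $\mathbb{E}(w(\gamma+i))=\mathbb{E}(w(\gamma))$ for every $\gamma$, and summing over $\gamma$ gives $\mathbb{E}(A_{[\ell,i,i,n]})=\mathbb{E}(W_{[\ell,0,0]})$ for every interior index $i$.

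Finally I would estimate the two sums. The interior contribution equals $\frac{n-2N}{n}\,\mathbb{E}(W_{[\ell,0,0]})$, which tends to $\mathbb{E}(W_{[\ell,0,0]})$ as $n\to\infty$ because $N$ depends only on $p,q,\ell$. For the boundary contribution I use the sure boundedness of the entries: if $|a^{(k)}_n|\leq M$ almost surely, then each $A_{[\ell,i,i,n]}$ is a sum of at most $(p+q+1)^{\ell}$ products of $\ell$ entries, so $|\mathbb{E}(A_{[\ell,i,i,n]})|\leq (p+q+1)^{\ell}M^{\ell}=:C$; since there are exactly $2N$ boundary indices, their total contribution is bounded in modulus by $2NC/n\to 0$. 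Adding the two limits yields \eqref{10-26-22-10}. I expect the only genuinely delicate point to be the translation invariance of the expected path weight, where one must verify that a vertical shift preserves the equality pattern among the repeated weight variables, so that the higher moments produced by repeated steps match on both sides; the remaining steps are elementary estimates and bookkeeping.
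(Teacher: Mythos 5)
Your proposal is correct and follows essentially the same route as the paper's proof: reduce the moments to $\frac1n\Tr(H_n^{\ell})$, use the identity $(H_n^{\ell})_{i,i}=A_{[\ell,i,i,n]}$ together with Proposition \ref{12-12-22-30} to equate $\mathbb{E}(A_{[\ell,i,i,n]})$ with $\mathbb{E}(W_{[\ell,0,0]})$ for interior indices, and kill the $O(N)$ boundary terms by sure boundedness. Your explicit justification of the shift-invariance of the expected path weight (preservation of the pattern of repeated weight variables) and the explicit bound $(p+q+1)^{\ell}M^{\ell}$ merely flesh out steps the paper states more tersely.
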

Since we have uniform boundedness of the supports of the measures $\mathbb{E}\sigma_{n}$, an equivalent
formulation of \eqref{10-26-22-10} is that for all $z$ large enough,
\[
\lim_{n\rightarrow\infty} \mathbb{E} (\int \frac{d\sigma_{n}(x)}{z-x})=\lim_{n\rightarrow\infty}\mathbb{E}\left(\frac{1}{n}\sum_{i=1}^{n}
\frac{1}{z-\lambda_{i,n}}\right)= \mathbb{E} (W_{0,0}(z)).
 \]
\begin{proof}
Let $\ell\geq 0$ be fixed. Since 
\[
\int x^{\ell}\, d\sigma_{n}(x)=\frac{1}{n}\,\Tr(H_{n}^{\ell}),
\]
we will establish the limit
\begin{equation}\label{12-13-22-1}
\lim_{n\rightarrow\infty}\frac{1}{n}\mathbb{E}(\Tr(H_{n}^{\ell}))=\mathbb{E} (W_{[\ell, 0.0]}).
\end{equation}
Let us first write
\begin{equation}\label{12-12-22-42}
\Tr(H_{n}^{\ell})=\sum_{i=0}^{n-1}\langle H_n^{\ell}\, e_{i},e_{i}\rangle=\sum_{i=0}^{n-1}(H_n^{\ell})_{i,i}.
\end{equation}
We represent the last sum in the form
\begin{equation}\label{12-12-22-43}
\sum_{i=0}^{n-1}(H_n^{\ell})_{i,i}=\sum_{ N \leq i\leq n-1-N }(H_n^{\ell})_{i,i}+S_n,
\end{equation}
where
\begin{equation}\label{12-12-22-44}
S_n:=
\sum_{0 \leq i < N }(H_n^{\ell})_{i,i}+
\sum_{n-1-N < i\leq n-1}(H_n^{\ell})_{i,i},
\end{equation}
and 
\[
\displaystyle N=\left[\frac{pq\ell}{p+q}\right]+\max\{p,q\}+1.
\]
Let $N \leq i\leq n-1-N.$ Given that the random variables $\{a_{n}^{(k)}: n\in\mathbb{Z}, -p\leq k\leq q\}$ are independent, and for each $k$ the variables $(a_{n}^{(k)})_{n\in\mathbb{Z}}$ are identically distributed, by
 \eqref{12-12-22-8} and
with the help of Proposition \ref{12-12-22-30} we get
$$ \mathbb{E}(H_n^{\ell})_{i,i}=\mathbb{E} (W_{[\ell,0,0]}).$$
Furthermore, since the random variables $a_{n}^{(k)}$ are surely bounded in modulus by an absolute constant, we obtain that the absolute value of each $(H_n^{\ell})_{i,i}$ in \eqref{12-12-22-44} is bounded by a constant that only depends on $\ell$ (not on $n$).
 Therefore, by \eqref{12-12-22-42}, \eqref{12-12-22-43}, and\eqref{12-12-22-44},
\[
\frac{1}{n}\mathbb{E}(\Tr(H_{n}^{\ell}))=\frac{n-2N}{n}\mathbb{E} (W_{[\ell,0,0]}) +o(1),\quad n\rightarrow \infty,
\]
and then, 
\eqref{12-13-22-1} and \eqref{10-26-22-10} follow.
\end{proof}

\bigskip

\noindent \textsc{The University of Alabama,
Tuscaloosa, AL 35487, USA} \\
\textit{Email address}: \texttt{jdkim4\symbol{'100}crimson.ua.edu}

\bigskip

\noindent \textsc{Department of Mathematics, University of Central Florida, 4393 Andromeda Loop North, Orlando, FL 32816, USA} \\
\textit{Email address}: \texttt{abey.lopez-garcia\symbol{'100}ucf.edu}

\bigskip

\noindent \textsc{Department of Mathematics and Statistics, University of South Alabama, 411 University Boulevard North, 
Mobile, AL 36688, USA}\\
\textit{Email address}: \texttt{prokhoro\symbol{'100}southalabama.edu}

\end{document}